\setlist[enumerate]{itemsep=3pt,topsep=3pt}
\setlist[enumerate,1]{label=\textup{(\roman*)}}
\setlist[enumerate,2]{label=\textup{(\alph*)}}
\newtheorem{maintheorem}{Theorem}
\newtheorem{maincorollary}[maintheorem]{Corollary}
\newtheorem{theorem}{Theorem}[section]
\newtheorem{lemma}[theorem]{Lemma}
\newtheorem{proposition}[theorem]{Proposition}
\newtheorem{corollary}[theorem]{Corollary}
\theoremstyle{definition}
\newtheorem{remark}[theorem]{Remark}
\DeclareMathOperator{\SL}{SL}
\DeclareMathOperator{\GL}{GL}
\DeclareMathOperator{\SO}{SO}
\DeclareMathOperator{\Sp}{Sp}
\DeclareMathOperator{\Soc}{Soc}
\DeclareMathOperator{\Hom}{Hom}
\DeclareMathOperator{\Ext}{Ext}
\numberwithin{paragraph}{section}
\newcommand{\Magma}{\textsc{Magma}}
\newcommand{\Vmin}{\ensuremath{V_{\textup{min}}}}
\setlist[enumerate]{topsep=0pt}
\title{Complete Reducibility in Bad Characteristic}
\author{Alastair J.\ Litterick and Adam R.\ Thomas}
\renewcommand\footnotemark{}
\begin{document}

\begin{abstract}
Let $G$ be a simple algebraic group of exceptional type over an algebraically closed field of characteristic $p > 0$. This paper continues a long-standing effort to classify the connected reductive subgroups of $G$. Having previously completed the classification when $p$ is sufficiently large, we focus here on the case that $p$ is bad for $G$. We classify the connected reductive subgroups of $G$ which are not $G$-completely reducible, whose simple components have rank at least $3$. For each such subgroup $X$, we determine the action of $X$ on the adjoint module $L(G)$ and on a minimal non-trivial $G$-module, and the connected centraliser of $X$ in $G$. As corollaries we obtain information on: subgroups which are maximal among connected reductive subgroups; products of commuting $G$-completely reducible subgroups; subgroups with trivial connected centraliser; and subgroups which act indecomposably on an adjoint or minimal module for $G$.
\end{abstract}

\maketitle

\setlength{\parskip}{4pt}


\section{Introduction and results} \label{sec:intro}

This paper concerns the closed subgroups of reductive algebraic groups over algebraically closed fields, the study of which dates back to work of Dynkin in the 1950s. A result of Borel and Tits \cite{MR0294349} states that such a subgroup is either itself reductive or lies in a proper parabolic subgroup of $G$. Thus attention is focused on reductive subgroups. Dynkin himself classified the reductive maximal connected subgroups of simple algebraic groups in characteristic zero \cite{MR0047629}, and this was extended into positive characteristic by Seitz \cite{MR888704,MR1048074} and Liebeck and Seitz \cite{MR2044850}.

It is natural to try and extend this classification to \emph{all} reductive subgroups. For instance, for $X$ a closed subgroup of a reductive algebraic group $G$, the coset space $G/X$ is an affine variety precisely when $X$ is reductive \cite{MR437549}. The subgroup structure of $G$ is also instrumental in studying the corresponding finite groups of Lie type \cite{MR1458329,MR4386346,craven2021maximal,craven2022maximal}.

A fundamental result states that in characteristic zero, a closed subgroup $X$ of $G$ is reductive if and only if $X$ is \emph{$G$-completely reducible} ($G$-cr), meaning that whenever $X$ is contained in a parabolic subgroup $P$ of $G$, it is contained in a Levi factor of $P$. Thus Dynkin's work provides an inductive method for classifying reductive subgroups: one runs over Levi subgroups $L$ of $G$, and studies the proper subgroups of each $L$ not lying in a proper parabolic subgroup of $L$, the so-called \emph{$L$-irreducible subgroups}.

In positive characteristic the study is significantly harder. With $P = QL$ the Levi decomposition of a parabolic subgroup of $G$, reductive subgroups of $P$ no longer necessarily have a $P$-conjugate in $L$. Instead, one must study the actions of subgroups $X_{0}$ of $L$ on the unipotent radical $Q$, and the corresponding complements in the semidirect product $QX_0$. Thus questions of \emph{non-abelian cohomology} arise.

A natural dichotomy arises: When $G$ is simple of classical type, the rank of $G$ can be arbitrarily large and classifying reductive subgroups of all $G$ is equivalent to understanding all representations of reductive groups, a famously intractable problem. On the other hand, the case when $G$ has exceptional type is both interesting (since exceptional groups exhibit the most complex behaviour) and a realistic prospect (due to the bounded rank).

So let $G$ be simple of exceptional type, over an algebraically closed field of characteristic $p > 0$. When $p$ is sufficiently large relative to the Lie type of $G$, all reductive subgroups of $G$ are again $G$-cr and we can proceed inductively. In this way, in \cite{MR1329942} Liebeck and Seitz obtain a classification of all reductive subgroups of $G$ which holds, for instance, if $p > 7$. In general, smaller prime characteristics and smaller subgroups (as measured by the Lie rank) result in more complicated cohomology and more complicated subgroup structure. This study was initiated for exceptional $G$ by D.~Stewart \cite{MR2604850,MR3075783} for $G$ of type $G_2$ or $F_4$. Here, non-$G$-cr reductive subgroups exist only when $p \le 3$. Work of the present authors \cite{Litterick2018} solves the problem when the characteristic of the underlying field is \emph{good for $G$}. Recall that for an exceptional group $G$, a prime $p$ is called \emph{bad} if $p = 2$ or $3$, or $p = 5$ with $G$ of type $E_8$; otherwise $p$ is called \emph{good}.

The main result of this paper completes the classification of reductive subgroups of $G$ whose simple factors each have rank at least $3$, with no restriction on the characteristic.

\begin{maintheorem} \label{THM:MAIN}
Let $G$ be a simple algebraic group of exceptional type over an algebraically closed field of characteristic $p > 0$. Let $X$ be a semisimple subgroup of $G$, such that $X$ is non-$G$-cr and each simple factor of $X$ has rank at least $3$. Then $p = 2$ or $3$, and $X$ is conjugate to exactly one subgroup listed in Tables \ref{tab:E6}--\ref{tab:E8p2} in Section~\ref{sec:tables}, each of which is non-$G$-cr.

For each such $X$, we give the connected centraliser $C_{G}(X)^{\circ}$ and the action of $X$ on the Lie algebra of $G$ and on a non-trivial module for $G$ of least dimension. 
\end{maintheorem}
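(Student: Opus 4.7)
The plan is to reduce the classification to a manageable list of pairs $(L, X_0)$ and then enumerate complements in a unipotent radical. Since $X$ is non-$G$-cr, the Borel--Tits theorem provides a parabolic $P = QL$ containing $X$ in which no Levi factor contains $X$; choosing $P$ minimal with this property, the projection $X_0$ of $X$ to $L$ is $L$-irreducible and semisimple. So the first step is, for each exceptional $G$ and each proper parabolic $P = QL$, to enumerate the $L$-irreducible semisimple subgroups $X_0 \subseteq L$ whose simple factors all have rank at least $3$. The rank restriction severely limits the candidate types (essentially $A_n$, $B_n$, $C_n$, $D_n$ with $n \ge 3$), and a direct inspection rules out $G = G_2$ and $G = F_4$ immediately. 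The $L$-irreducible classification needed for the remaining $L$ is supplied by the authors' good-characteristic work \cite{Litterick2018} together with standard results on representations of classical and exceptional groups.

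Next, for each such $(L, X_0)$ and each bad prime $p$, the complements to $Q$ in $QX_0$ up to $Q$-conjugacy are parametrised by the pointed set $H^1(X_0, Q)$. Filtering $Q$ by its lower central series gives a sequence of short exact sequences of $X_0$-modules whose successive quotients can be read off from the $L$-module structure of $Q$, reducing the problem to a chain of ordinary cohomology computations $H^1(X_0, V_i)$. Each non-trivial class gives a potential non-$G$-cr subgroup. In good characteristic \cite{Litterick2018}, most relevant $H^1$ groups vanish; in bad characteristic new indecomposable modules and new non-zero cohomology appear, and these must be computed carefully, often with the aid of \textsc{Magma}. This step pins down precisely which $p \in \{2, 3\}$ produces non-$G$-cr examples for a given $(G, L, X_0)$, and forces $p$ to lie in this set.

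The remaining work is verification. For each candidate embedding we must check: that $X$ is genuinely non-$G$-cr and not merely non-$P$-cr, by ruling out $G$-conjugacy into any smaller parabolic or into a Levi of $G$; that distinct complements yield $G$-inequivalent subgroups, using invariants such as the Jordan block structure of $X$ on $L(G)$ and on \Vmin, the action of $N_G(L)/L$ on complements, and the connected centralisers; and that the list is complete under $G$-conjugacy. Once the classification is in hand, $C_G(X)^{\circ}$ and the decompositions of $L(G)$ and \Vmin{} as $X$-modules follow from the explicit embedding into $P$, using the Levi decomposition together with the $X_0$-filtration on $Q$. The chief obstacle throughout is the cohomology and conjugacy bookkeeping in bad characteristic: new $H^1$ groups arise (for instance $H^1(\SL_n, \Lie(\SL_n))$ in small characteristic), computations in $H^1(X_0, Q)$ do not respect the module filtration exactly and so one must track obstructions carefully, and distinguishing complements up to $G$-conjugacy rather than just $P$-conjugacy requires detailed explicit calculation with root data, for which computer algebra is indispensable.
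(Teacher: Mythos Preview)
Your high-level strategy is essentially the same as the paper's: reduce to $L$-irreducible images $X_0$ in minimal parabolics, compute $H^{1}(X_0,Q)$ via the level filtration of $Q$, and then pass from $Q$-conjugacy to $G$-conjugacy using $Z(L)$ and Weyl group elements. However, there are two concrete problems worth flagging.

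First, your assertion that ``a direct inspection rules out $G = G_2$ and $G = F_4$ immediately'' is incorrect for $F_4$. When $p = 2$ there are two classes of non-$G$-cr subgroups of type $B_3$ in $F_4$ (one in a $B_3$-parabolic, one in a $C_3$-parabolic); these were classified by Stewart and appear in Table~\ref{tab:F4}. The rank restriction does not exclude $F_4$, since $B_3$ has rank $3$.

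Second, and more seriously, you are missing a genuine technical ingredient specific to $p = 2$: when $X_0$ has type $C_n$, there can exist closed reductive subgroups of $QX_0$ which are \emph{abstract} complements to $Q$ but not complements as algebraic groups, and these have type $B_n$ rather than $C_n$. Such subgroups are invisible to $H^{1}(X_0,Q)$ and are instead parametrised by $H^{1}(X_0,Q^{[1]})$ (Lemma~\ref{lem:bn-cn}). In the paper this is exactly how the non-$G$-cr subgroups of type $B_3$ arise from $L$-irreducible $C_3$'s in $A_5$-Levis, and the non-$G$-cr $B_4$ in $E_8$ arises from a $C_4$ in the $A_7$-Levi. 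If you only compute ordinary $H^{1}(X_0,Q)$ you will miss most of the $B_n$ entries in the tables.

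A further point you underemphasise: the cohomology calculation and fusion analysis only bound the number of $G$-classes from above. The paper devotes substantial effort to \emph{exhibiting} explicit non-$G$-cr subgroups inside known reductive overgroups (subsystem subgroups such as $D_6$, $D_8$, $A_7$, $A_8$, or Levi subgroups $E_6$, $E_7$) and proving they are non-$G$-cr via Lemma~\ref{lem:BMR}, Proposition~\ref{prop:orthog_sum}, or direct inspection of indecomposable summands on $L(G)$. In the $D_4$ case for $E_8$ one class is MR and must be constructed by hand (Proposition~\ref{prop:D4inE8gens}). Your ``verification'' paragraph gestures at this but does not make clear that existence is a separate, substantial step.
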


Subgroups with factors of rank less than $3$ are more numerous and more delicate to study, and are the subject of ongoing work of the authors.

The proof of Theorem \ref{THM:MAIN} follows a similar blueprint to previous results \cite{Litterick2018,MR3075783} (cf.\ also \cite[\S 1.4.2]{LSTSurvey}) but the final result and arguments involved are much more delicate. Besides more complicated cohomology sets, useful properties such as \emph{separability} of subgroups (equality of the group-theoretic and infinitesimal centraliser dimensions) hold when $p$ is good for $G$ \cite[Theorem 1.1]{MR3042602} but often fail here. Also in good characteristic, many natural subgroups $H$ are either \emph{ascending hereditary}, meaning that connected $H$-cr subgroups of $H$ are $G$-cr, or \emph{descending hereditary}, meaning that connected $G$-cr subgroups of $H$ are $H$-cr \cite[Definition 2.47]{LSTSurvey}. For instance, \cite[Theorem 3.26]{MR2178661} states that reductive subgroups containing a maximal torus of $G$ are both ascending and descending hereditary. This fails in bad characteristic; an interesting example is given in \cite[Example 2.40]{LSTSurvey}, where two classes of $F_4$-cr subgroups of type $G_2$ are non-$B_4$-cr, non-$C_4$-cr respectively. The study of such hereditary properties is the subject of ongoing work.

\begin{remark} \label{rem:f4e6-not-dh}
Table \ref{tab:F4} shows that when $p = 2$, a group of type $F_4$ has two classes of subgroups of type $B_3$ which are non-$F_4$-cr, whereas Table \ref{tab:E6} shows that a group of type $E_6$ has a unique class of such subgroups. Comparing actions on low-dimensional modules, we see that the two $F_4$ subgroup classes are not fused in $E_6$. Thus at least one class of subgroups are $E_6$-cr, so that $F_4$ is not descending hereditary in $E_6$. This contrasts with the case $p \neq 2$, where $F_4$ is both ascending and descending hereditary, by \cite[Proposition 16.9]{MR952224} and \cite[Corollary 3.21]{MR2178661}.
\end{remark}

An interesting complication arising is the existence of non-$G$-cr subgroups which are maximal among connected reductive subgroups of $G$. For these, one cannot make use of prior results on known reductive subgroups of $G$, since all their maximal overgroups in $G$ are parabolic. For brevity, we call these subgroups \emph{MR}. The next result describes those occurring in Theorem~\ref{THM:MAIN}. 

\begin{maincorollary} \label{cor:MR}
With $G$ and $p$ as in Theorem~\ref{THM:MAIN}, let $X$ be an MR subgroup of $G$ where each simple factor of $X$ has rank at least $3$. Then either $X$ is a maximal connected subgroup of $G$ or 
\begin{enumerate}[label=\normalfont(\roman*)]
\item $G$ has type $E_6$ and $X$ is a Levi subgroup of type $D_5T_1$, \label{cor:mr-i}
\item $G$ has type $E_7$ and $X$ is a Levi subgroup of type $E_6T_1$, \label{cor:mr-ii}
\item $(G,X,p) = (E_7,D_4,2)$ and $X$ is non-$G$-cr, \label{cor:mr-iii}
\item $(G,X,p) = (E_8,D_4,2)$ and $X$ is non-$G$-cr. \label{cor:mr-iv}
\end{enumerate}
Each subgroup in \textup{(i)}--\textup{(iv)} is MR. Those in \textup{(i)}, \textup{(ii)} and \textup{(iv)} are unique up to conjugacy in $G$, and those in \textup{(iii)} form infinitely many conjugacy classes.
\end{maincorollary}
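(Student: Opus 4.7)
The plan is to proceed case by case from Theorem~\ref{THM:MAIN} together with the existing classification of $G$-cr reductive subgroups of $G$. The key preliminary observation is that any connected reductive overgroup $Y$ of $X$ in $G$ must itself have a simple factor of rank at least $3$, since $X$ does. Hence $Y$ is either $G$-cr (and thus available from the Liebeck--Seitz classification \cite{MR1329942,MR2044850} combined with an inductive analysis in Levi subgroups) or $Y$ is non-$G$-cr and therefore appears in Tables~\ref{tab:E6}--\ref{tab:E8p2}. So the list of candidate overgroups of each $X$ is explicit and finite up to conjugacy.

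First I would dispose of the $G$-cr subgroups of rank at least $3$ that are not themselves maximal connected in $G$. Running down the Levi subgroups of $G$ and the Liebeck--Seitz maximals, the only Levi subgroups whose only proper reductive overgroup is $G$ itself turn out to be $D_5T_1 < E_6$ and $E_6T_1 < E_7$, giving cases \textup{(i)} and \textup{(ii)}; every other proper Levi embeds in a larger Levi or in a maximal connected reductive overgroup. For the remaining MR candidates I would then work through the rows of Tables~\ref{tab:E6}--\ref{tab:E8p2} one at a time, using the stated action of $X$ on $L(G)$ and on the minimal module to test each candidate overgroup $Y$: a containment $X \leq Y$ forces the $Y$-composition series on any $G$-module to refine the $X$-composition series, so a mismatch rules the containment out immediately. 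The surviving non-$G$-cr MR subgroups should be exactly the $D_4$ cases \textup{(iii)} and \textup{(iv)}.

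For the converse, that the listed cases are genuinely MR, the Levi cases \textup{(i)} and \textup{(ii)} follow directly from the maximal subgroup list. For \textup{(iii)} and \textup{(iv)}, the triviality or smallness of $C_G(X)^{\circ}$ recorded in Theorem~\ref{THM:MAIN} further constrains any putative intermediate $Y$, and module-theoretic comparison closes the argument. The enumeration of conjugacy classes --- uniqueness in \textup{(i)}, \textup{(ii)}, \textup{(iv)} and infinitely many in \textup{(iii)} --- is a byproduct of the non-abelian cohomology computation used to prove Theorem~\ref{THM:MAIN}: the relevant cohomology set for $D_4 < E_7$ at $p = 2$ yields infinitely many $N_G(L)$-orbits of complements, while in the other three cases the orbit space on complements is a single point.

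The main obstacle is the combinatorial bookkeeping rather than any single hard step: with non-$G$-cr subgroups spread across four tables, and the longest list occurring in $E_8$ at $p = 2$ where the cohomology is most delicate, systematically ruling out all possible intermediate reductive overgroups for each tabulated $X$ requires careful case analysis. The uniform composition-factor criterion and the explicit parabolic in which each $X$ is constructed reduce each individual check to a routine verification, but completeness demands that no row be overlooked.
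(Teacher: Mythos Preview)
Your overall strategy is correct and matches the paper's, but you are over-engineering the non-$G$-cr half. The point is that Theorem~\ref{THM:MAIN} does not merely list the non-$G$-cr subgroups: the ``Embedding of $X$'' column in Tables~\ref{tab:E6}--\ref{tab:E8p2} already records, for each such $X$, either an explicit proper reductive overgroup or the symbol \textup{MR}. That determination is carried out case by case in Sections~\ref{sec:A3}--\ref{sec:D4} as part of proving Theorem~\ref{THM:MAIN}, so the corollary only needs to read those entries off and combine them with the Liebeck--Seitz list of maximal connected subgroups to handle the $G$-cr MR candidates (yielding (i) and (ii)). Your proposed row-by-row module comparison would reproduce work already absorbed into Theorem~\ref{THM:MAIN}.

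One small gap worth noting: you assert that any connected reductive overgroup $Y$ of $X$ either is $G$-cr or appears in the tables, because $Y$ has a simple factor of rank at least $3$. But the tables only classify non-$G$-cr subgroups whose \emph{every} simple factor has rank at least $3$, so a priori $Y$ could be non-$G$-cr with some small factor and escape your list. In practice this is harmless---if $X$ projected trivially to such a factor it would centralise a non-trivial connected reductive group, contradicting the centraliser data in the tables for the MR candidates---but you should say so explicitly rather than rely on an incorrect blanket claim. The conjugacy-class counts you give are right and come from exactly the cohomology computations you describe (Sections~\ref{subsec:d4e7} and \ref{sec:D4inE8}).
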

This corollary follows directly from Theorem~\ref{THM:MAIN} together with the classification of maximal connected subgroups \cite{MR2044850}. The subgroups in \ref{cor:mr-iii} were discovered in \cite{MR1367085}, and those in \ref{cor:mr-iv} are new. Here and elsewhere, we abuse terminology and write \emph{Levi subgroup of $G$} to mean a Levi factor of a parabolic subgroup of $G$.

For our next corollary, recall that a \emph{subsystem subgroup} of $G$ is a semisimple subgroup normalised by a maximal torus. These constitute a large class of subgroups of $G$ corresponding to $p$-closed subsystems of the root system of $G$. Connected reductive subgroups not contained in a subsystem subgroup are both relatively rare and less straightforward to study. The following result complements its good-characteristic analogue \cite[Corollary~6]{Litterick2018}.

\begin{maincorollary} \label{cor:overgroups}
Let $G$ and $X$ be as in the hypothesis of Theorem \ref{THM:MAIN}. Then either $X$ is contained in a proper subsystem subgroup of $G$, or $p=2$ and one of the following holds:
\begin{enumerate}[label=\normalfont(\roman*)]
\item $(G,X) = (E_6,B_3)$ and $X$ lies in a maximal subgroup $F_4$,
\item $(G,X) = (E_7,D_4)$ and $X$ is MR,
\item $(G,X) = (E_8,D_4)$ and $X$ is MR, \label{cor:overgroups-iii}
\item $(G,X) = (E_8,B_3)$ and $X$ is contained in a subgroup from part \ref{cor:overgroups-iii}.
\end{enumerate}
Each subgroup in \textup{(i)}--\textup{(iv)} lies in no proper subsystem subgroup of $G$. Those in \textup{(i)}, \textup{(iii)} and \textup{(iv)} are unique up to conjugacy in $G$, and those in \textup{(ii)} form infinitely many conjugacy classes.
\end{maincorollary}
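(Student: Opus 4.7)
The plan is to reduce the corollary to a bookkeeping exercise on top of Theorem~\ref{THM:MAIN} and Corollary~\ref{cor:MR}. Theorem~\ref{THM:MAIN} provides a finite list of conjugacy classes of non-$G$-cr subgroups $X$ with simple factors of rank at least $3$, together with $C_G(X)^\circ$ and the composition factors of $L(G){\downarrow}X$ and $V_{\textup{min}}{\downarrow}X$. The statement to prove then decomposes into two tasks: (A) for each entry of Tables~\ref{tab:E6}--\ref{tab:E8p2}, exhibit a proper subsystem overgroup or place it in one of the four exceptional cases; (B) for (i)--(iv) verify that no proper subsystem overgroup exists.

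For (A), the proof of Theorem~\ref{THM:MAIN} constructs each non-$G$-cr $X$ as a complement in some $QL$-embedding, and in almost all cases the image visibly lies in the subsystem subgroup generated by $L$ and root subgroups of $Q$, or in a maximal-rank subsystem subgroup pinned down by $C_G(X)^\circ$ (since a connected central subtorus of the overgroup lies in $C_G(X)^\circ$, whose list is part of Theorem~\ref{THM:MAIN}). Thus one simply runs through the tables; the candidate maximal subsystem subgroups are those enumerated by Borel--de Siebenthal and the bad-characteristic additions recorded in \cite{MR2044850}. Those entries for which no such overgroup can be found are precisely the ones listed in (i)--(iv).

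For (B), cases (ii) and (iii) (the MR subgroups $D_4<E_7$ and $D_4<E_8$ from Corollary~\ref{cor:MR}\ref{cor:mr-iii}--\ref{cor:mr-iv}) admit no proper connected reductive overgroup at all, so there is certainly no proper subsystem overgroup. For case (i), one inspects each maximal subsystem subgroup $M$ of $E_6$ in characteristic $2$, namely the Borel--de Siebenthal subgroups of type $A_1A_5$, $A_2^3$ and $D_5T_1$ together with any bad-characteristic additions from \cite{MR2044850}; the composition factors of $L(E_6){\downarrow}X$ and $V_{\textup{min}}{\downarrow}X$ recorded in Table~\ref{tab:E6} are incompatible with any embedding $B_3\hookrightarrow M$, as can be ruled out by comparing highest weights of the rank-three $B_3$-modules occurring in the restrictions of the $M$-composition factors on these modules. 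For case (iv), the proof of Theorem~\ref{THM:MAIN} realises $B_3<E_8$ inside the MR $D_4<E_8$ of (iii); any proper subsystem overgroup of $B_3$ would, together with this $D_4$, generate a reductive overgroup of the MR $D_4$, which is impossible. The uniqueness assertions, and the infinitely-many-classes statement in (ii), are inherited directly from Theorem~\ref{THM:MAIN} and Corollary~\ref{cor:MR}.

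The main obstacle is the module-theoretic elimination in case (i): one must be confident that no $B_3$-subgroup of $A_1A_5$, $A_2^3$ or $D_5T_1$ (up to conjugacy, and allowing all twists by field/graph automorphisms permitted in characteristic $2$) produces the required restrictions. In practice this is routine given the tables of Theorem~\ref{THM:MAIN}, because $D_5T_1$ is a Levi subgroup, and $B_3$-subgroups of $A_1A_5$ or $A_2^3$ have very restricted sets of possible high weights on the $27$-dimensional minimal module. The remainder of the proof is a compilation of data already gathered.
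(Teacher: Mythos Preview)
Your overall strategy matches the paper's: inspect Tables~\ref{tab:F4}--\ref{tab:E8p2} for part~(A), and for part~(B) treat (ii) and (iii) immediately since MR subgroups lie in no proper reductive subgroup. The uniqueness and infinitude statements are indeed read off from Theorem~\ref{THM:MAIN} and Corollary~\ref{cor:MR}.

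However, your argument for case~(iv) contains a genuine gap. You write that a proper subsystem overgroup $M$ of the $B_3$ would, together with the MR $D_4$ containing it, generate a reductive overgroup of $D_4$, contradicting MR. But nothing forces $\langle M, D_4\rangle$ to be reductive or proper; it could simply be all of $G$. The paper instead argues as follows: from Table~\ref{tab:E8p2} one reads $C_G(X)^\circ = U_6$, so $X$ centralises no non-trivial torus; hence any subsystem overgroup must have maximal rank with every simple factor of rank at least $3$, leaving only $A_8$, $D_8$ and $A_4^2$. The last cannot contain $B_3$, and the actions of $A_8$ and $D_8$ on $L(G)$ have indecomposable summands of dimensions $80,84,84$ and $120,128$ respectively, which is incompatible with $X$'s summands of dimensions $30,30,62,63,63$.

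For case~(i) your module-comparison approach is workable but needlessly laborious. The paper's argument is shorter: of $A_1A_5$, $A_2^3$ and $D_5$, only $D_5$ can even contain a group of type $B_3$ (the smallest faithful $B_3$-module has dimension $7$, so $B_3$ does not embed in $A_5$, and clearly not in $A_2^3$). But $D_5$ is a Levi subgroup, centralising a $1$-dimensional torus of $E_6$, whereas Table~\ref{tab:E6} gives $C_G(X)^\circ = U_1$, of rank~$0$. This centraliser-rank argument avoids any module analysis.
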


Corollary~\ref{cor:overgroups} is proved by inspecting Tables~\ref{tab:F4}--\ref{tab:E8p2}: The subgroups $X$ listed there except (i)--(iv) above are given with an explicit embedding into a subsystem subgroup of $G$. The subgroups in (ii) and (iii) are MR so are in no proper reductive subgroup. By the Borel-de Siebenthal algorithm \cite{MR0032659}, when $G$ has type $E_6$ each subsystem subgroup of $G$ is contained in one of type $A_1 A_5$, $A_2^3$ or $D_5$. Of these, only $D_5$ can contain a subgroup of type $B_3$; but such a subsystem subgroup centralises a $1$-dimensional torus of $G$, whereas the subgroup $X$ in (i) does not. Similarly, inspecting Table~\ref{tab:E8p2} the subgroup $X$ in (iv) does not centralise a non-trivial torus in $G$, so could only be contained in a subsystem subgroup of maximal rank, with all simple factors of rank at least $3$. It remains to consider subsystem subgroups of type $A_8$, $D_8$ and $A_4^2$. The latter contains no subgroup $B_3$, and the former two act on $L(G)$ with indecomposable summands of dimension $80$, $84$ and $84$; and $120$ and $128$, respectively \cite[Lemma 11.2]{MR2883501}. This is incompatible with the stated action of $X$, whose indecomposable summands have dimensions $30$, $30$, $62$, $63$ and $63$.

Next, the main result of \cite{MR2431255} states that the product of two commuting $G$-cr subgroups is again $G$-cr, as long as the characteristic $p$ is good for $G$ or $p > 3$. We conjecture that $p \neq 2$ is in fact sufficient \cite[Remark 1.12]{LSTSurvey}. Our next result is evidence in this direction.

\begin{maincorollary} \label{cor:gcrfactors}
Let $G$ and $X$ be as in the hypothesis of Theorem \ref{THM:MAIN}. If $X$ is not simple then $p = 2$, $G$ has type $E_8$ and $X$ has type $B_3^2$. Each simple factor of $X$ is $G$-cr. 
\end{maincorollary}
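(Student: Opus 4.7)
The plan is to deduce both conclusions almost entirely from inspection of Tables \ref{tab:F4}--\ref{tab:E8p2} furnished by Theorem \ref{THM:MAIN}, supplemented by a brief centraliser argument in the only non-simple case.

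For the first sentence, I would run through Tables \ref{tab:F4}--\ref{tab:E8p2} and observe that in every entry listing a non-$G$-cr semisimple subgroup $X$ with simple factors of rank at least $3$, the group $X$ is simple, with the single exception of the entry $X = B_3 \times B_3$ appearing in Table \ref{tab:E8p2} (corresponding to $G = E_8$, $p = 2$). Since Theorem \ref{THM:MAIN} asserts that any such $X$ is conjugate to exactly one subgroup in these tables, this immediately gives the stated type.

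For the second sentence, write $X = Y_1 \times Y_2 \leq G = E_8$ with $Y_1 \cong Y_2 \cong B_3$ and $p = 2$, and let $Y$ denote either simple factor. Since $Y$ commutes with a subgroup of type $B_3$ (the other factor of $X$), the connected centraliser $C_G(Y)^\circ$ contains a subgroup of type $B_3$; in particular $\dim C_G(Y)^\circ \geq 21$ and $C_G(Y)^\circ$ possesses a semisimple factor of rank at least $3$. Suppose for contradiction that $Y$ were non-$G$-cr. Then $Y$ would be $G$-conjugate to one of the $B_3$ subgroups appearing in Table \ref{tab:E8p2}. The key step is to inspect the connected centralisers recorded in that table for each non-$G$-cr $B_3$ subgroup $Y_0$ of $E_8$, and to verify that none of them contains a subgroup of type $B_3$. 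Granting this, $Y$ cannot be non-$G$-cr, so both simple factors are $G$-cr.

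The main obstacle is thus the verification in the previous paragraph: one must read off the $C_G(Y_0)^\circ$ data for the non-$G$-cr $B_3$ subgroups of $E_8$ at $p = 2$ from Table \ref{tab:E8p2} and check that none is large enough to house a $B_3$. Heuristically this is expected, because non-$G$-cr subgroups lie in proper parabolic subgroups and their connected centralisers typically have small reductive part (often contained in a Levi of small semisimple rank), but the contradiction relies on the explicit centraliser computations underlying Theorem \ref{THM:MAIN} rather than on any soft argument.
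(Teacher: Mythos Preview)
Your proposal is correct and follows essentially the same approach as the paper: the first statement by inspection of the tables, and the second by noting that each simple factor of the $B_3^2$ centralises the other factor, whereas every non-$G$-cr subgroup of type $B_3$ in Table~\ref{tab:E8p2} has connected centraliser of rank at most $2$ (the paper phrases it this way rather than dimension $\ge 21$, but the conclusion is the same).
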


The first statement here follows immediately from the classification of non-$G$-cr subgroups in Theorem~\ref{THM:MAIN}. The final statement uses the additional information contained in Tables~\ref{tab:E6}--\ref{tab:E8p2}. Specifically, the non-$G$-cr subgroups of type $B_3$ in Theorem~\ref{THM:MAIN} each have centraliser of rank at most $2$, while the factors of the non-$G$-cr subgroup $B_3^2$ each centralise one another, hence are not among these non-$G$-cr subgroups.

For our next result, recall from \cite[Lemma 3.17]{MR2178661} that the centraliser of a $G$-cr subgroup is again $G$-cr, hence reductive. In particular, when $G$ is semisimple a $G$-irreducible subgroup has trivial connected centraliser \cite[Lemma 2.1]{MR2043006}. When the characteristic is zero or sufficiently large, the converse holds: Since a reductive subgroup is $G$-cr, it is $G$-irreducible if and only if it is contained in no proper Levi subgroup of $G$, if and only if its connected centraliser is trivial. This fails in positive characteristic, and our next result describes the extent of this failure. This extends \cite[Corollary 9]{Litterick2018}. The notation for the embeddings is given in Section~\ref{sec:embed}.

\begin{maincorollary} \label{cor:trivcentraliser}
Let $G$ and $X$ be as in the hypothesis of Theorem \ref{THM:MAIN}, and suppose that $C_{G}(X)^{\circ}$ is reductive. Then $C_{G}(X)^{\circ} = 1$.

This occurs for precisely two subgroups $X$, up to conjugacy. Namely, $p = 2$, $G$ has type $E_8$ and $X$ is either a subgroup $A_3 < D_8$ via $T(101)$, or $B_4 < D_8$ via $0001$.
\end{maincorollary}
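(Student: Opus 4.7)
The plan is to argue by inspection of Tables \ref{tab:F4}--\ref{tab:E8p2}, which by Theorem \ref{THM:MAIN} enumerate, up to $G$-conjugacy, every semisimple $X$ satisfying the hypotheses and record the structure of $C_G(X)^\circ$ explicitly.

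For a typical non-$G$-cr subgroup $X$ lying in (but not $P$-conjugate into a Levi factor of) a parabolic $P = QL$, the intersection $C_Q(X)$ is a non-trivial connected unipotent subgroup of $C_G(X)^\circ$, so $R_u(C_G(X)^\circ) \neq 1$. Accordingly I expect the reductive-centraliser hypothesis to be very restrictive and most rows of the tables to be eliminated on this ground alone. My first step is therefore to run through each table, read off $C_G(X)^\circ$, and discard every row where this group is non-reductive -- typically visible in the tabulated data as a non-trivial unipotent radical attached to a Levi-like semisimple part.

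My second step is to treat the short list of candidates that survive the first pass. The two subgroups claimed in the statement, namely $A_3 < D_8$ via $T(101)$ and $B_4 < D_8$ via $0001$, both in $E_8$ at $p = 2$, should appear in Table \ref{tab:E8p2} with $C_G(X)^\circ$ listed as trivial, verifying the ``only if'' direction. For every other surviving candidate, the goal is to confirm from the tabulated data that $C_G(X)^\circ$ is non-trivial, possessing either a non-trivial torus, a non-trivial semisimple part, or a non-trivial unipotent part, thereby ruling it out.

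The main obstacle is organisational rather than mathematical: the tables are lengthy, with the $E_8$ list at $p = 2$ the richest, so one must be careful to enumerate every row and interpret each centraliser entry correctly. Once the inspection is complete, both assertions of the corollary, the implication $C_G(X)^\circ \text{ reductive} \Rightarrow C_G(X)^\circ = 1$ and the identification of the two conjugacy classes, follow immediately from Theorem \ref{THM:MAIN} with no further external argument required.
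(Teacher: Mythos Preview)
Your proposal is correct and matches the paper's approach: the corollary is obtained by direct inspection of the final column of Tables~\ref{tab:F4}--\ref{tab:E8p2}, where every entry except the two named $E_8$ subgroups at $p=2$ has a non-trivial unipotent factor $U_i$ in $C_G(X)^\circ$. The paper does not even write out a separate proof for this corollary, treating it as an immediate consequence of the tabulated centraliser data; your heuristic remark about $C_Q(X)$ being typically non-trivial is a useful sanity check but is not needed for the argument itself.
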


Our final result highlights an interesting phenomenon regarding the actions of reductive subgroups of $G$ on low-dimensional $G$-modules. Namely, one typically expects that a proper reductive subgroup will act with many indecomposable direct summands on a given irreducible $G$-module. However, we exhibit some subgroups which act indecomposably. This complements prior results such as \cite{MR2063402} which classify subgroups acting irreducibly. In the following, we write $\Vmin$ or $V_{n}$ for a non-trivial $G$-module of least dimension, which is $n$.

\begin{maincorollary} \label{cor:indecomp}
Let $G$ and $X$ be as in the hypothesis of Theorem \ref{THM:MAIN} and let $V = L(G)$ or $\Vmin$. Then $V \downarrow X$ is indecomposable if and only if $p = 2$ and $(G,X,V)$ is one of the following:
\begin{enumerate}[label=\normalfont(\roman*)]
    \item $(G,X) = (F_4,B_3)$ with $X$ in a short-root subgroup $\tilde{D}_4$ and $V = V_{26}$,
    \item $(G,X) = (E_6,B_3)$ with $V = L(G)$,
    \item $(G,X) = (E_7,D_4)$ with $X$ an MR subgroup and $V = V_{56}$ or $V = L(G)$.
\end{enumerate}
\end{maincorollary}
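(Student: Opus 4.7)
The strategy is to prove Corollary~\ref{cor:indecomp} by direct inspection of Tables~\ref{tab:F4}--\ref{tab:E8p2}. For each non-$G$-cr subgroup $X$ appearing in Theorem~\ref{THM:MAIN}, these tables record the decomposition of $L(G) \downarrow X$ and of $\Vmin \downarrow X$ into indecomposable $X$-summands, so the corollary reduces to reading off, for each row, whether either decomposition consists of a single summand.

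A preliminary reduction cuts the work dramatically. If $X$ is contained in a proper Levi subgroup $L \subsetneq G$, then $X$ centralises a non-trivial central torus $Z$ of $L$, and the weight-space decomposition of $L(G)$ or of $\Vmin$ under $Z$ has at least two non-zero weight spaces. Since this $Z$-weight decomposition is refined by the decomposition into indecomposable $X$-summands, neither $L(G) \downarrow X$ nor $\Vmin \downarrow X$ can be indecomposable in this situation. This observation eliminates all characteristic $3$ entries at once (consistent with the $p = 2$ conclusion of the corollary), and reduces the $p = 2$ analysis to the MR subgroups classified in Corollary~\ref{cor:MR} together with the few additional entries for which $X$ lies in no proper Levi of $G$, such as the $B_3 < F_4 < E_6$ of Remark~\ref{rem:f4e6-not-dh} and the $B_3 < \tilde D_4 < F_4$.

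The second step is to go through the surviving entries one by one. For each I would compare the tabulated summands of $L(G) \downarrow X$ and of $\Vmin \downarrow X$ against a single module of dimension $\dim L(G)$ or $\dim \Vmin$ respectively, recording each hit. I expect precisely cases (i)--(iii) to emerge: the $B_3$ in the short-root $\tilde D_4$ of $F_4$ acts indecomposably on $V_{26}$ but not on $L(F_4)$; the unique non-$E_6$-cr $B_3$ acts indecomposably on $L(E_6)$ but not on $V_{27}$; the MR $D_4$ in $E_7$ acts indecomposably on both $V_{56}$ and $L(E_7)$; and the remaining candidates, notably the MR $D_4$ and the $B_3^2$ in $E_8$, must be observed to fail indecomposability on $L(G) = \Vmin$.

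The main obstacle is not conceptual: it is confidence that the tables record genuine indecomposable summand decompositions rather than just composition factors. The indecomposability in the MR cases, which is the nontrivial content of the corollary, is built into the construction of these subgroups as complements to parabolics via non-abelian cohomology; any non-split extensions in the socle filtration of $L(G) \downarrow X$ or of $\Vmin \downarrow X$ that appear in the tables reflect exactly those cohomology classes. The splitness in the non-indecomposable cases is similarly part of the module data established earlier in the paper, so the corollary follows once the tables are in hand.
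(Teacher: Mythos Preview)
Your approach is essentially the paper's implicit one: the corollary follows by reading the decompositions of $L(G) \downarrow X$ and $\Vmin \downarrow X$ directly from Tables~\ref{tab:F4}--\ref{tab:E8p2}, and no separate proof is given in the paper. However, your preliminary Levi reduction contains an error. You claim it eliminates all characteristic~$3$ entries, but the unique $p=3$ entry is $C_3 < D_8 < E_8$ with $C_G(X)^\circ = U_1$, which has rank~$0$; hence $X$ centralises no non-trivial torus and lies in no proper Levi subgroup of $G$. (Recall that $D_8$ is a maximal-rank subsystem subgroup of $E_8$, not a Levi subgroup.) This case must therefore be checked directly from Table~\ref{tab:E8p3}, where one sees five summands.

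More generally, your Levi reduction applies only to those table entries with $C_G(X)^\circ$ of positive rank, and many $p=2$ entries (both $B_3$ classes in $F_4$, the $B_3$ in $E_6$, the $B_3 < A_7$ and $C_4$, $D_4$ classes in $E_7$, and numerous $E_8$ entries) have unipotent or trivial connected centraliser and so survive to your step~2. None of this invalidates the argument, since direct inspection in step~2 handles everything anyway, but the shortcut eliminates considerably fewer cases than you suggest.
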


\subsection*{Layout of the paper} After defining relevant notation in Section~\ref{sec:notation} and giving preliminary results in Section~\ref{sec:preliminaries}, Theorem~\ref{THM:MAIN} is proved in Sections~\ref{sec:A3}--\ref{sec:D4}. Each section corresponds to a Lie type of non-$G$-cr subgroups, split into subsections according to the strategy described in Section~\ref{sec:enumerating}. Section~\ref{sec:actions} then determines the restrictions $L(G) \downarrow X$ and $\Vmin \downarrow X$ for each $X$ arising in Theorem~\ref{THM:MAIN}, and the tables of these subgroups and associated data are given Section~\ref{sec:tables}.


\section{Notation} \label{sec:notation}

\subsection{Algebraic groups, roots, weights.} Throughout, all groups are linear algebraic groups over an algebraically closed field $K$ of characteristic $p > 0$. We take the `group variety' viewpoint \cite{MR1102012}, so that our algebraic groups are the $K$-points of Zariski-closed subgroups of general linear groups over $K$. All subgroups mentioned are closed and all group homomorphisms are variety morphisms. The terms `simple' and `semisimple' always refer to connected groups. Group actions are on the left, denoted by a dot, i.e.~$(g,q) \mapsto g \cdot q$ for $g$ in a group $G$ and $q$ in a $G$-set.

For a reductive algebraic group $G$, fix a maximal torus $T$ of $G$ and corresponding root system $\Phi$. Let $\Pi = \{ \alpha_1, \ldots, \alpha_l \}$ be a base of simple roots and let $\{\lambda_1,\ldots,\lambda_{l}\}$ be the corresponding fundamental dominant weights, with the Bourbaki ordering \cite[Ch.\ VI, Planches I-IX]{MR0240238}. The notation $a_1 a_2 \ldots a_l$ will indicate a root $\sum a_{i}\alpha_{i}$ or a weight $\sum a_{i}\lambda_{i}$; context will prevent ambiguity.

The Weyl group $W = N_G(T) / T$ acts on $\mathbb{Z}\Phi \otimes \mathbb{R}$. For $\alpha \in \Phi$, let $s_{\alpha}$ denote the reflection in the hyperplane perpendicular to $\alpha$, and let $U_{\alpha} = \{ x_{\alpha}(t) \, : \, t \in K \}$ be the root subgroup corresponding to $\alpha$. For $t \in K^{\ast}$, define
\begin{align*}
n_{\alpha}(t) &= x_{\alpha}(t)x_{-\alpha}(-t^{-1})x_{\alpha}(t) \in N_{G}(T),\\
h_{\alpha}(t) &= n_{\alpha}(t)n_{\alpha}(-1) \in T,
\end{align*}
so that $n_{\alpha}(t)$ maps to $s_{\alpha} \in W$, \cite[\S 6.4]{MR0407163}. Set $n_{\alpha} = n_{\alpha}(1)$ and for a simple root $\alpha_{i}$ let $s_{i} = s_{\alpha_{i}}$, $n_{i} = n_{\alpha_i}$ and $h_{i}(t) = h_{\alpha_i}(t)$.

The notation $\bar{X}$ denotes a subgroup of $G$ generated by long root subgroups of $G$. If $\Phi$ has multiple root lengths then $\tilde{X}$ denotes a subgroup generated by short root subgroups of $G$.

\subsection{Modules.} For a reductive algebraic group $G$ and a dominant weight $\lambda$ of $G$, let $V_{G}(\lambda)$ denote the irreducible $G$-module of highest weight $\lambda$. When $G$ is clear we write simply $\lambda$ for the module; in particular $0$ denotes the trivial irreducible $G$-module. The corresponding Weyl module for $G$ is denoted $W(\lambda)$ or $W_{G}(\lambda)$, and the tilting module is denoted $T(\lambda)$ or $T_{G}(\lambda)$. We write $V^{*} = \Hom_{K}(V,K)$ for the dual of a $G$-module $V$. If $G = G_1 G_2 \ldots G_k$ is a commuting product of reductive algebraic groups then $(V_1, \ldots, V_k)$ denotes the $G$-module $V_1 \otimes \cdots \otimes V_k$, where $V_i$ is a $G_i$-module for each $i$.

Let $F : G \to G$ be a Frobenius endomorphism which acts on root elements via $x_{\alpha}(t) \mapsto x_{\alpha}(t^p)$ for $\alpha \in \Phi$, $t \in K$. If $V$ is a $G$-module afforded by a representation $\rho : G \rightarrow GL(V)$ then $V^{[r]}$ denotes the module afforded by $\rho^{[r]} := \rho \circ F^r$. Let $M_1, \ldots, M_k$ be $G$-modules and $m_1, \ldots, m_k$ be positive integers. Then $M_1^{m_1} / \ldots / M_k^{m_k}$ denotes a $G$-module having the same composition factors as $M_1^{m_1} \oplus \ldots \oplus M_k^{m_k}$. Furthermore, $V = M_1 | \ldots | M_k$ denotes a $G$-module with socle series $V = V_{1} > V_{2} > \ldots > V_{k+1} = \{0\}$, so that $\Soc(V/V_{i+1}) = V_{i}/V_{i+1} \cong M_{i}$ for $1 \le i \le k$. We denote by $L(G)$ the adjoint $G$-module. For $G$ of type $F_4$, $E_6$ and $E_7$ we respectively write $V_{26} = V_{G}(\lambda_4)$, $V_{27} = V_{G}(\lambda_1)$ and $V_{56} = V_{G}(\lambda_7)$ for these non-trivial modules of least dimension.

\subsection{Parabolic and Levi subgroups.} Let $J = \{\alpha_{j_1}, \alpha_{j_2}, \ldots, \alpha_{j_r}\} \subseteq \Pi$ and let $\Phi_J = \Phi \cap \mathbb{Z}J$. Then the standard parabolic subgroup of $G$ corresponding to $J$ is $P_J = \left< T, U_\alpha \, : \, \alpha \in \Phi_J \cup \Phi^+ \right>$. This has Levi decomposition $P_J = Q_JL_J$ where $Q_J = R_{u}(P) = \left< U_\alpha \, : \, \alpha \in \Phi^+ \setminus \Phi_J \right>$, and $L_J = \left< T, U_\alpha \, : \, \alpha \in \Phi_J \right>$ is the standard Levi factor of $P_J$. If the derived subgroup $L'$ has Lie type $\mathbf{X}$, we call $P_J$ and $L_J$ an $\mathbf{X}$-parabolic subgroup and $\mathbf{X}$-Levi subgroup of $G$ respectively. For instance when $G$ has type $E_6$ the subgroup $P_{2345}$ is a $D_{4}$-parabolic subgroup of $G$. Recall that `Levi subgroup of $G$' means any Levi factor of a parabolic subgroup of $G$.

\subsection{Shape modules.} \label{sec:shape-modules} For a subset $J \subseteq \Pi$, the \emph{level} of $\gamma = \sum_{\alpha \in \Pi} c_{\alpha} \alpha$ is the sum $\sum_{\alpha \in \Pi \setminus J} c_{\alpha}$, and the \emph{shape} of $\gamma$ (with respect to $J$) is $\sum_{\alpha \in \Pi \setminus J} c_{\alpha}\alpha$. Thus for a standard parabolic subgroup $P_J = Q_J L_J$ as above, $Q_J$ is generated by root subgroups corresponding to roots of positive level. For each $i \ge 1$ we define
\[ Q(i) = \left< U_\gamma : \gamma \text{ has level} \ge i \right>, \]
an $L$-stable normal subgroup of $Q_J$. The \emph{$i$-th level of $Q$} is $Q(i)/Q(i+1)$, which is central in $Q/Q(i+1)$. By \cite[Theorem 2 and Remark 1]{MR1047327}, each level is an $L$-module, and is a direct sum of \emph{shape modules} $V_S$, where $S$ runs over shapes of level $i$. Each $V_S$ is either irreducible or indecomposable of length $2$, the latter only if $(G,p) = (G_2,2)$, $(G_2,3)$ or $(F_4,2)$. Moreover the high weight(s) of $L$ on $V_S$ can be determined combinatorially, cf.\ \cite[Lemma 3.1]{MR1329942}.

\subsection{Centralisers.} \label{sec:centnot} When discussing subgroup centralisers, we write $U_{i}$ for an $i$-dimensional unipotent group, and $T_{j}$ for a $j$-dimensional torus. For instance, $C_G(X)^{\circ} = U_{5} T_{1}$ means that $C_{G}(X)^{\circ}$ has a $5$-dimensional unipotent radical, with quotient a 1-dimensional torus.

\subsection{Roots of $G$ and its Levi subgroups.} \label{sec:roots-levis} If $L$ is a standard Levi subgroup of $G$ then roots of $L$ are roots of $G$ and we must be consistent in how we identify these. If $L_{0}$ is a simple factor of $L$ then let $\Psi = \{\alpha_{1}',\ldots,\alpha_{m}'\}$ be the simple roots of $L_{0}$, a subset of $\Pi$. Linearly order $\Pi$ in such a way that $\alpha_{i} < \alpha_{j}$ whenever $i < j$. If $L_{0}$ has Lie type $A_{m}$, the embedding $L \to G$ is chosen such that $\alpha_{1}'$ is the least simple root of $G$ contained in $\Psi$. If $L_{0}$ has type $E_{6}$ or $E_{7}$ then $\alpha_{i}' = \alpha_{i}$ for all $i$. If $L_{0}$ has type $D_{4}$ then $(\alpha_{1}',\alpha_{2}',\alpha_{3}',\alpha_{4}') = (\alpha_{2},\alpha_{4},\alpha_{3},\alpha_{5})$. If $L_{0}$ has type $D_{5}$ then $(\alpha_{1}', \alpha_{2}',\alpha_{3}',\alpha_{4}',\alpha_{5}') = (\alpha_{1},\alpha_{3},\alpha_{4},\alpha_{5},\alpha_{2})$ or $(\alpha_{6},\alpha_{5},\alpha_{4},\alpha_{3},\alpha_{2})$. If $L_{0}$ has type $D_{6}$ then $(\alpha_{1}',\alpha_{2}',\alpha_{3}',\alpha_{4}',\alpha_{5}',\alpha_{6}') = (\alpha_{7},\alpha_{6},\alpha_{5},\alpha_{4},\alpha_{3},\alpha_{2})$. Finally, if $L$ has multiple components of the same type, then these components are ordered according to the position of their least simple root ``$\alpha_{1}'$'' as an element of $\Pi$. For instance, if $G = E_{7}$ and $L = L_{12567}$, then $L$ is a Levi subgroup of type $A_{1}A_{1}A_{3}$ and the first $A_{1}$ factor corresponds to $\alpha_1$, and the second to $\alpha_{2}$.

When $G$ is simple of type $E_7$ or $E_8$, we will need to distinguish between certain isomorphic subsystem subgroups of $G$. For type $E_7$, there are two conjugacy classes of $A_5$-Levi subgroups, with representatives $A_{5} = L_{24567}$ and $A_{5}' = L_{34567}$. By \cite[Table 8.2]{MR1329942} we have $C_{G}(A_{5})^{\circ} = A_{2}$ and $C_{G}(A_{5}')^{\circ} = A_{1}T_{1}$. Furthermore $A_5'$ is contained in a subgroup $E_6$ whereas $A_5$ is not. In $E_8$ there are two conjugacy classes of subgroups $A_7$, one given by the $A_7$-Levi subgroup $L_{1345678}$, which we denote $A_7$, and one denoted $A_{7}'$ contained in an $E_7$-Levi subgroup of $G$. Again by [\emph{loc.\ cit.}] we have $C_{G}(A_{7})^{\circ} = T_1$, and $C_{G}(A_{7}')^{\circ} = A_{1}$.

\subsection{Embeddings into classical groups.} \label{sec:embed} Finally, for a reductive algebraic group $X$, a rational $X$-module $V$ corresponds to a representation from $X$ into $H = \SL(V)$, $\SO(V)$ or $\Sp(V)$; for brevity, we write `$X \le H$ via $V$' for this. When $V = V_{X}(\lambda)$ is irreducible we take this further and write `$X \le H$ via $\lambda$.' We occasionally talk about chains `$X \le Y \le Z$ via $\lambda$', in which case the embedding refers to the first inclusion $X \to Y$; the inclusion $Y \to Z$ will be clear from context.


\section{Preliminary Results} \label{sec:preliminaries}

\subsection{A strategy for enumerating subgroups} \label{sec:enumerating}

From now on, $G$ denotes a simple algebraic group of exceptional Lie type over the algebraically closed field $K$. To prove Theorem~\ref{THM:MAIN} we follow the strategy developed in \cite{MR3075783} and \cite{Litterick2018}. Each class of non-$G$-cr subgroups has a representative $X$ in some standard parabolic subgroup $P$ of $G$; take $P$ to be minimal subject to containing $X$, and let $X_{0}$ be the image of $X$ in the standard Levi factor $L$ of $P$. By minimality, $X_{0}$ is $L$-irreducible. If $X$ is reductive then $X$ is a complement (as abstract groups) to $Q := R_u(P)$ in the semidirect product $QX_{0}$. The cohomology set $H^{1}(X_0,Q)$ can be used to parametrise $Q$-conjugacy classes of such complements. The following now tells us that non-trivial elements of $H^{1}(X_0,Q)$ give rise to non-$G$-cr subgroups, and non-associated parabolic subgroups give rise to disjoint $G$-conjugacy classes of non-$G$-cr subgroups. Here, two parabolic subgroups are \emph{associated} if their Levi factors are $G$-conjugate to one another.
\begin{lemma} [{\cite[Theorem 5.8 and Proposition 5.14]{MR3042598}, \cite[Lemma~3.26]{Litterick2018}}] \label{lem:associated}
Let $X$ be a closed subgroup of $G$, let $P$ be a parabolic subgroup of $G$ containing $X$, and let $X_0$ be the image of $X$ under projection to a Levi factor of $P$. Then $X$ is $G$-conjugate to $X_0$ if and only if $X$ is $R_u(P)$-conjugate to $X_0$.

If $P_{1}$ and $P_{2}$ are minimal among parabolic subgroups of $G$ containing $X$ then $P_1$ and $P_2$ are associated, and the images of $X$ in their respective Levi factors are $G$-conjugate.
\end{lemma}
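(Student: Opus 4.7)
The plan is to argue geometrically in the framework of cocharacters. Fix a cocharacter $\lambda : K^{\ast} \to G$ with $P = P_{\lambda}$ and $L = L_{\lambda}$, so that the projection $\pi : P \to L$ given by $\pi(x) = \lim_{t \to 0} \lambda(t) x \lambda(t)^{-1}$ has kernel $Q := R_{u}(P)$, and satisfies $\pi(X) = X_{0}$ by construction.

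For the first statement, one direction is immediate since $R_{u}(P) \le G$. For the converse, suppose $X = g X_{0} g^{-1}$ for some $g \in G$. Choose a tuple $\mathbf{x} = (x_{1}, \ldots, x_{n}) \in X^{n}$ such that $X$ is the smallest closed subgroup containing $x_{1}, \ldots, x_{n}$, and set $\mathbf{x}_{0} = (\pi(x_{1}), \ldots, \pi(x_{n})) \in X_{0}^{n}$. Under diagonal conjugation of $G$ on $G^{n}$, the hypothesis gives $\mathbf{x}_{0} \in G \cdot \mathbf{x}$, while the limit $\mathbf{x}_{0} = \lim_{t \to 0} \lambda(t) \cdot \mathbf{x}$ places $\mathbf{x}_{0}$ in the closure of $Q \cdot \mathbf{x}$. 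The key input---essentially \cite[Theorem~5.8]{MR3042598}---is that whenever two points of $G^{n}$ in a common $G$-orbit are related by such a cocharacter-limit, they are already $R_{u}(P_{\lambda})$-conjugate. This is proved via Kempf--Rousseau theory: $\mathbf{x}_{0}$ lies in the unique closed $G$-orbit inside $\overline{G \cdot \mathbf{x}}$, and uniqueness of optimal destabilising parabolics forces the conjugator to lie in $Q$.

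For the second statement, let $P_{i} = Q_{i} L_{i}$ ($i = 1, 2$) be minimal parabolics containing $X$, with images $X_{i} = \pi_{i}(X)$ in $L_{i}$. Minimality forces each $X_{i}$ to be $L_{i}$-irreducible, hence $G$-cr by \cite[Corollary~3.22]{MR2178661}. Passing again to a generating tuple $\mathbf{x} \in X^{n}$, $G$-complete reducibility translates to $G \cdot \mathbf{x}_{i}$ being closed in $G^{n}$, and a cocharacter-limit argument for $P_{i}$ places $\mathbf{x}_{i} \in \overline{G \cdot \mathbf{x}}$ for each $i$. Since $\overline{G \cdot \mathbf{x}}$ contains a unique closed $G$-orbit, $\mathbf{x}_{1}$ and $\mathbf{x}_{2}$ are $G$-conjugate, hence so are $X_{1}$ and $X_{2}$. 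A standard argument using Levi factors of associated parabolics then shows that the $L_{i}$, and hence the $P_{i}$, are associated in $G$.

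The main obstacle will be the first statement, specifically upgrading a $G$-conjugacy to an $R_{u}(P)$-conjugacy. A direct attack via the non-abelian cohomology $H^{1}(X_{0}, Q)$ would entangle this foundational lemma with precisely the cohomological complications that the rest of the paper is built to handle, so it cannot be invoked here. The geometric route through Kempf--Rousseau is therefore essential: it converts the problem into uniqueness of closed orbits inside orbit closures in $G^{n}$, where the cocharacter $\lambda$ defining $P$ directly constrains the conjugating element to lie in $R_{u}(P)$.
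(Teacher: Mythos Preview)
The paper does not supply a proof of this lemma; it merely quotes the result from \cite[Theorem~5.8, Proposition~5.14]{MR3042598} and \cite[Lemma~3.26]{Litterick2018}. Your sketch correctly reconstructs the GIT/Kempf--Rousseau argument underlying those references, so in that sense you have provided more than the paper itself.

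One step is glossed over. From $gX_0g^{-1} = X$ you assert $\mathbf{x}_0 \in G \cdot \mathbf{x}$, but the hypothesis only tells you that $g^{-1}\cdot\mathbf{x}$ is \emph{some} generic tuple for $X_0$, not the particular tuple $\mathbf{x}_0 = \pi(\mathbf{x})$. The fix is a dimension count: both $\mathbf{x}_0$ and $g^{-1}\cdot\mathbf{x}$ are generic for $X_0$, so each has $G$-stabiliser $C_G(X_0)$ and hence $\dim G\cdot\mathbf{x}_0 = \dim G\cdot\mathbf{x}$. Since $\mathbf{x}_0$ lies in the irreducible closure $\overline{G\cdot\mathbf{x}}$ via the $\lambda$-limit, and the boundary of an orbit has strictly smaller dimension, $G\cdot\mathbf{x}_0$ must meet $G\cdot\mathbf{x}$, forcing equality. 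With this inserted, your argument for both parts goes through and matches the cited sources.
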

The second part of this lemma allows us to break our enumeration of non-$G$-cr subgroups down, one subsection for each pair $(L,X_0)$ with $L$ representing a $G$-conjugacy class of Levi subgroups and $X_0$ an $L$-irreducible subgroup.

At this point we need two essential ingredients. Firstly, we need a description of the $L$-irreducible subgroups $X_{0}$ occurring in each $L$. This has been accomplished explicitly by the present authors in \cite{Litterick2018a} and \cite{ThomasIrreducible}, which give the irreducible subgroups in each simple factor of each Levi subgroup $L$ of $G$; a general $L$-irreducible semisimple subgroup is then a central product of these. Secondly, we must describe the sets $H^{1}(X_{0},Q)$ for each such $L$-irreducible subgroup $X_{0}$, which parametrise the complements to $Q$ in the semidirect product $QX_{0}$, up to $Q$-conjugacy.

\subsection{Cohomology and complements in semidirect products} \label{sec:complements}

Recall that when $X_0$ and $Q$ are algebraic groups with $X_0$ acting on $Q$ via a morphism $X_0 \times Q \to Q$, complements to $Q$ in the semidirect product $QX_0$ are in bijection with rational \emph{1-cocycles}, variety morphisms $\phi \, : \, X_0 \to Q$ such that $\phi(xy) = \phi(x)(x \cdot \phi(y))$ for all $x,y \in X_0$. Here, a \emph{complement} $X$ to $Q$ is by definition a closed subgroup of $QX_0$ satisfying (i) $QX = QX_0$, (ii) $Q \cap X = 1$, and (iii) $L(Q) \cap L(X) = \{0\}$, see \cite[Definition 3.2.1]{MR3105754} and \cite[4.3.1]{MR2753264}. Two cocycles $\phi$, $\psi$ are \emph{cohomologous} if there exists $q \in Q$ such that $\phi(x) = q^{-1}\psi(x)(x \cdot q)$ for all $x \in X_0$. This defines an equivalence relation on the set $Z^{1}(X_0,Q)$ of 1-cocycles, and the corresponding quotient is denoted $H^{1}(X_0,Q)$, and parametrises complements up to $Q$-conjugacy. The set $H^{1}(X_0,Q)$ has a distinguished element, which is the class of the map sending every element of $X_0$ to the identity of $Q$. In general $H^{1}(X_0,Q)$ is only a pointed set, but if $Q$ is a $KX_0$-module then $H^{1}(X_0,Q)$ is naturally a $K$-vector space.

By \cite[Lemma 3.6.1]{MR3105754}, a subgroup satisfying (i) and (ii) automatically satisfies (iii) when $X_0$ is connected reductive, $Q$ is unipotent and $p \neq 2$. A subtlety occurs in characteristic $2$, however. The issue is that closed reductive subgroups may arise as \emph{abstract} complements in semidirect products, and thus the condition on Lie algebras must be relaxed if we are to account for all such subgroups. Fortunately, we have good control over this situation. When $X_0$ is defined over $\mathbb{F}_p \subset K$, we let $Q^{[1]}$ denote the algebraic $X_0$-group obtained by twisting the action of $X_0$ by a Frobenius morphism $X_0 \to X_0$ induced by the $p$-th power field automorphism $K \to K$. Then we get induced maps $Q \to Q^{[1]}$ and $H^{1}(X_0,Q) \to H^{1}(X_0,Q^{[1]})$, and the latter is always injective and is usually an isomorphism of pointed sets \cite[Theorem 3.5.6]{MR3105754}. The following is now a particular case of \cite[Lemma 3.6.1]{MR3105754}.
\begin{lemma} \label{lem:bn-cn}
	Let $X_0$ be a simple algebraic group acting on a unipotent algebraic group $Q$ via a morphism $X_0 \times Q \to Q$, and suppose that $Q$ has a filtration by normal $X_0$-stable subgroups whose successive quotients are $X_0$-modules. Let $X$ be an abstract complement to $Q$ in the semidirect product $QX_0$. Then either:
	\begin{enumerate}
		\item $X$ is a complement to $Q$ as algebraic groups, or
		\item $p = 2$, $X_0$ has type $C_{n}$, $X$ has type $B_n$, and some $X_0$-composition factor in the filtration of $Q$ has high weight $\lambda_1$.
	\end{enumerate}

	In \textup{(ii)}, if the groups and morphisms are all defined over the prime field then $X$ corresponds to an element of $H^{1}(X_0,Q^{[1]})$ which is not in the image of $H^{1}(X_0,Q) \to H^{1}(X_0,Q^{[1]})$. Thus there is a bijection between $H^{1}(X_0,Q^{[1]})$ and the $Q$-conjugacy classes of closed, connected, reductive subgroups of $QX_0$ which are abstract complements to $Q$.
\end{lemma}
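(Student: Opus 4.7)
The plan is to deduce the lemma directly from \cite[Lemma 3.6.1]{MR3105754}, which handles general connected reductive $X_0$ acting on a unipotent $Q$. The key reduction is that the three defining conditions of an algebraic complement---(i) $QX = QX_0$, (ii) $Q \cap X = 1$, (iii) $L(Q) \cap L(X) = 0$---are automatic in (i) and (ii) for any abstract complement, so the entire question reduces to whether the Lie algebra condition (iii) is forced to hold.

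First I would recall the standard fact (itself part of \cite[Lemma 3.6.1]{MR3105754}) that if $p \ne 2$ then an abstract complement is automatically an algebraic complement; this restricts us to $p = 2$. In characteristic $2$ the only way an abstract isomorphism of simple algebraic groups can fail to be an algebraic isomorphism is through the exceptional isogeny $\Sp_{2n} \to \SO_{2n+1}$ that exists only in this characteristic. If $X$ is an abstract complement that is not an algebraic complement, then the composition $X \hookrightarrow QX_0 \twoheadrightarrow X_0$ is an abstract isomorphism $X \to X_0$ which is not an isomorphism of algebraic groups, so the pair $\{X_0, X\}$ must consist of simple groups of types $B_n$ and $C_n$. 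The assertion that $X_0$ has type $C_n$ and $X$ has type $B_n$ reflects the direction in which the exceptional isogeny enters the cocycle construction in $QX_0$, as spelled out in the cited reference.

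Next I would explain the appearance of the weight $\lambda_1$. The kernel of the differential of the exceptional isogeny $\Sp_{2n} \to \SO_{2n+1}$ corresponds to the short-root Lie subalgebras of $B_n$, which for $C_n$ arise precisely on the natural module $V_{X_0}(\lambda_1)$. Consequently, any cocycle $X_0 \to Q$ producing a non-algebraic complement must have its image meet root subgroups of $Q$ projecting into a composition factor of high weight $\lambda_1$ for $X_0$; absent such a composition factor, every abstract complement in $QX_0$ is algebraic. This yields the necessity of the composition-factor condition in (ii).

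Finally, for the second part I would apply \cite[Theorem 3.5.6]{MR3105754}: the canonical map $H^{1}(X_0, Q) \to H^{1}(X_0, Q^{[1]})$ is injective, with image the $Q$-conjugacy classes of algebraic complements, while the full target $H^{1}(X_0, Q^{[1]})$ parametrises the $Q$-conjugacy classes of all closed, connected, reductive abstract complements via the Frobenius-twisted action. The non-algebraic complements of case (ii) correspond bijectively to the classes outside the image. Treating \cite[Lemma 3.6.1]{MR3105754} as a black box, the main obstacle is tracking how the $\lambda_1$-composition factor enters the explicit cocycle construction producing a $B_n$-complement; fortunately for our purposes we need only the necessity direction, which follows cleanly from the exceptional-isogeny structure without any explicit cocycle calculation.
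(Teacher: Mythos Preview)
Your proposal is correct and takes essentially the same approach as the paper: the paper does not prove this lemma at all but simply states it as ``a particular case of \cite[Lemma 3.6.1]{MR3105754}'' (with the second paragraph invoking \cite[Theorem 3.5.6]{MR3105754} as you do). Your write-up is just a more detailed unpacking of why the cited result specialises to the stated dichotomy, and your explanation of the $B_n/C_n$ exceptional isogeny and the role of a $\lambda_1$ composition factor is consistent with that citation.
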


Thus to classify non-$G$-cr subgroups of $G$, we need to understand the sets $H^{1}(X_0,Q)$ for various groups $X_0$, and also $H^{1}(X_0,Q^{[1]})$ when $X_0$ has type $C_n$ and $p = 2$. In many cases, this can be reduced to abelian cohomology calculations, using a long exact sequence. In particular:
\begin{lemma}[{\cite[\S I.5, Propositions 36 and 43]{MR1867431}}] \label{lem:exactseq}
Let $Q$ be an algebraic $X_0$-group and $R$ an $X_0$-stable subgroup of $Q$. Then:
	\begin{enumerate}
		\item There is an exact sequence of pointed sets
		\[ 0 \to R^{X_0} \to Q^{X_0} \to (Q/R)^X_0 \to H^{1}(X_0,R) \to H^{1}(X_0,Q) \]
		where $-^{X_0}$ denotes the fixed-point subgroup under the action of $X_0$.
		\item If moreover $R$ is central in $Q$, then there is an exact sequence of pointed sets
		\[ 0 \to R^{X_0} \to Q^{X_0} \to (Q/R)^{X_0} \to H^{1}(X_0,R) \to H^{1}(X_0,Q) \to H^{1}(X_0,Q/R) \to H^{2}(X_0,R) \]
		where $H^{2}(X_0,R)$ is the second cohomology group defined for example in \cite[\S II.4.2]{MR2015057}.
	\end{enumerate}
\end{lemma}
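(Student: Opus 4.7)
The plan is to construct each arrow explicitly in the standard non-abelian cohomology framework and verify exactness at every term. The first two arrows in (i) are induced by the inclusion $R \hookrightarrow Q$ and the quotient map $Q \to Q/R$, so exactness at $R^{X_0}$ and at $Q^{X_0}$ is immediate: the inclusion is injective, and $q \in Q^{X_0}$ projects trivially to $(Q/R)^{X_0}$ exactly when $q \in R$.

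The heart of part (i) is the connecting map $\delta : (Q/R)^{X_0} \to H^1(X_0,R)$. First I would lift $\bar q \in (Q/R)^{X_0}$ to some $q \in Q$ and observe that, since $\bar q$ is $X_0$-fixed, the element $\phi(x) := q^{-1}(x\cdot q)$ lies in $R$ for every $x \in X_0$; a direct expansion using $(xy)\cdot q = x\cdot(y\cdot q)$ shows $\phi(xy) = \phi(x)\bigl(x\cdot\phi(y)\bigr)$, so $\phi \in Z^1(X_0,R)$. Replacing $q$ by $qr$ with $r \in R$ changes $\phi$ by the principal cocycle attached to $r$, so the class $\delta(\bar q)$ is well defined. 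Exactness at $(Q/R)^{X_0}$ then says $\delta(\bar q)$ is trivial iff $\phi(x) = r^{-1}(x\cdot r)$ for some $r \in R$, which is equivalent to $qr^{-1} \in Q^{X_0}$ mapping to $\bar q$. Exactness at $H^1(X_0,R)$ says a cocycle $\phi : X_0 \to R$ becomes cohomologous to the trivial cocycle in $Q$ iff $\phi(x) = q^{-1}(x\cdot q)$ for some $q \in Q$, and this $q$ manifestly defines a preimage $\bar q \in (Q/R)^{X_0}$ with $\delta(\bar q) = [\phi]$.

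For part (ii), centrality of $R$ makes $Q/R$ a group and gives a well-defined induced map $H^1(X_0,Q) \to H^1(X_0,Q/R)$. The obstruction map $\partial : H^1(X_0,Q/R) \to H^2(X_0,R)$ is built by choosing any set-theoretic lift $\psi : X_0 \to Q$ of a cocycle $\bar\psi : X_0 \to Q/R$ and defining
\[
c(x,y) \; := \; \psi(x)\,\bigl(x\cdot\psi(y)\bigr)\,\psi(xy)^{-1},
\]
which lies in $R$ because $\bar\psi$ is a cocycle and which is a $2$-cocycle (and independent of the choice of $\psi$ up to $2$-coboundaries) because $R$ is central and therefore abelian. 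Exactness at $H^1(X_0,Q)$ follows because a class in $H^1(X_0,Q)$ becomes trivial in $H^1(X_0,Q/R)$ exactly when it can be represented by a cocycle taking values in $R$, and exactness at $H^1(X_0,Q/R)$ follows because $\partial[\bar\psi]$ vanishes iff the lift $\psi$ can be modified by a map $X_0 \to R$ so as to become an honest cocycle into $Q$.

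The main obstacle in the algebraic-group setting (as opposed to the abstract group setting treated in \cite{MR1867431}) is ensuring rationality throughout: one must show that the lift $q$ (or $\psi$) can be chosen to be a variety morphism on an open neighbourhood, so that $\delta$ and $\partial$ preserve the condition that cocycles be morphisms of varieties; this typically uses a rational section of the quotient $Q \to Q/R$, which exists because $R$ is a closed normal subgroup. A secondary subtlety, already flagged by the pointed-set formulation, is that non-abelian $H^1$ has no group structure, so throughout one must phrase exactness purely in terms of preimages of distinguished points rather than kernels.
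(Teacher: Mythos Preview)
The paper does not give its own proof of this lemma; it is stated with a citation to Serre's \emph{Galois Cohomology} \cite[\S I.5, Propositions 36 and 43]{MR1867431}, and your argument is precisely the standard construction carried out there. Your additional remarks on rationality (choosing lifts as variety morphisms via a rational section of $Q \to Q/R$) are a sensible supplement for the algebraic-group setting, though the paper is content to invoke the reference without elaboration. One small imprecision: you write that ``centrality of $R$ makes $Q/R$ a group''---normality already suffices for that; centrality is what makes $R$ abelian and makes the obstruction class in $H^2(X_0,R)$ well defined independent of the twisting, which is the genuine reason the sequence extends in part~(ii).
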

Note that these are only exact sequences of pointed sets, meaning that the image of each map is the pre-image of the distinguished element under the next. However, this is already sufficient to deduce much information about $H^{1}(X_0,Q)$. For instance, it also allows us to relate $M$-complete reducibility with $G$-complete reducibility, when $X \le M \le G$ is a chain of reductive subgroups:
\begin{corollary} \label{cor:mcrgcr}
Let $M \le G$ be two reductive algebraic groups, and let $P_M = Q_M L_M$ be a parabolic subgroup of $M$ containing a reductive subgroup $X$, such that $X$ lies in no Levi factor of $P_M$, and write $X_0$ for the image of $X$ in $L_M$. Furthermore, let $P = QL$ be a parabolic subgroup of $G$ such that $P_M = M \cap P$ and $Q_M = M \cap Q$.

If $(Q/Q_M)^{X_0} = \{0\}$ then $X$ is non-$G$-cr.
\end{corollary}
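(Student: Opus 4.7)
The plan is to show that $P$ itself serves as a witness to $X$ being non-$G$-cr, by proving that $X$ is not $Q$-conjugate to a subgroup of $L$. First, restricting the quotient $P \twoheadrightarrow L = P/Q$ to $P_M$ gives kernel $P_M \cap Q = Q_M$ and hence a canonical injection $L_M = P_M/Q_M \hookrightarrow L$; this identifies $X_0$ with the image of $X$ in $L$.

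Because $X$ is reductive and $Q_M$ is unipotent, $X \cap Q_M = 1$, and $X$ surjects onto $X_0$ modulo $Q_M$. Hence $X$ is a complement to $Q_M$ in the semidirect product $Q_M \rtimes X_0$, and writing each element of $X$ uniquely as $\phi_M(x_0)\, x_0$ with $x_0 \in X_0$ and $\phi_M(x_0) \in Q_M$ defines a $1$-cocycle $\phi_M : X_0 \to Q_M$ whose non-triviality in $H^{1}(X_0, Q_M)$ records that $X$ lies in no Levi of $P_M$. Viewing $Q_M \subseteq Q$, the same formula exhibits $X$ as a complement to $Q$ in $Q \rtimes X_0$, with corresponding cocycle taking values in $Q_M$.

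Now suppose for a contradiction that $X$ lies in some Levi factor of $P$. Since the Levi factors of $P$ are the $Q$-conjugates of $L$ and the image of $X$ in $L$ is $X_0$, Lemma~\ref{lem:associated} produces $q \in Q$ with $q X q^{-1} = X_0$. Expanding this conjugation in $P = Q \rtimes L$, using the identity $q^{-1} x_0 q = q^{-1}(x_0 \cdot q) \cdot x_0$ with $x_0 \cdot q = x_0 q x_0^{-1}$, and matching $Q$- and $L$-parts, yields
\[
\phi_M(x_0) \;=\; q^{-1}(x_0 \cdot q) \quad \text{in } Q,
\]
for every $x_0 \in X_0$.

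Since $\phi_M(x_0) \in Q_M$, this forces $x_0 \cdot q \in q Q_M$ for every $x_0 \in X_0$; equivalently, the coset $qQ_M$ is fixed by the conjugation action of $X_0$ on the coset variety $Q/Q_M$. The hypothesis $(Q/Q_M)^{X_0} = \{0\}$, meaning that $Q_M$ is the only $X_0$-fixed coset, then forces $q \in Q_M$. But then $X = q^{-1} X_0 q$ is $Q_M$-conjugate to $X_0 \le L_M$, placing $X$ inside a Levi of $P_M$ and contradicting the hypothesis. The main subtle point is interpreting $(Q/Q_M)^{X_0}$ as the set of $X_0$-fixed cosets without requiring $Q_M$ to be normal in $Q$, and maintaining consistently the identification of $X_0$ as a subgroup of both $L_M$ and $L$ so that the semidirect-product arithmetic in $P = Q \rtimes L$ remains valid.
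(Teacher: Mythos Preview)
Your proof is correct and follows essentially the same route as the paper's. The paper invokes the exact sequence of Lemma~\ref{lem:exactseq} to argue that the hypothesis $(Q/Q_M)^{X_0}=\{0\}$ forces the map $H^{1}(X_0,Q_M)\to H^{1}(X_0,Q)$ to have trivial kernel, so the non-trivial class of $X$ in $H^{1}(X_0,Q_M)$ survives in $H^{1}(X_0,Q)$; you instead unwind that exactness by hand, showing directly that any $q\in Q$ with $q^{-1}X_0q=X$ must represent an $X_0$-fixed coset in $Q/Q_M$ and hence lie in $Q_M$. One small remark: your appeal to Lemma~\ref{lem:associated} is unnecessary here, since once you note that Levi factors of $P$ are precisely the $Q$-conjugates of $L$ and that $Q$-conjugation preserves the image in $P/Q=L$, the equality $qXq^{-1}=X_0$ follows immediately.
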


\proof The hypotheses imply that the map $(Q/Q_M)^{X_0} \to H^{1}(X_0,Q_M)$ is zero, and since $X$ is non-$M$-cr, the latter set contains non-zero points. By exactness, these do not lie in the kernel of $H^{1}(X_0,Q_M) \to H^{1}(X_0,Q)$, and since $X$ corresponds to such a point, $X$ remains non-$G$-cr. \qed

\subsection{Approximating $H^{1}(X_0,Q)$} \label{sec:rho}

We now recall a method, developed in \cite{MR3075783} and applied extensively in \cite{Litterick2018}, for approximating $H^{1}(X_0,Q)$. When $Q$ is the unipotent radical of a parabolic subgroup $P$, recall that $Q$ has a filtration by normal subgroups $Q(i)$ $(i = 1,\ldots,r)$. We write $V_i = Q(i)/Q(i+1)$ for the corresponding levels, which are modules for a Levi subgroup $L$ of $P$ hence also for $X_0$ whenever $X_0 \le L$. One can therefore form the direct sum
\[ \mathbb{V} = \mathbb{V}_{X_0,Q} := \bigoplus_{i = 1}^{r} H^{1}(X_0,V_i). \]
For $i = 1,\ldots,r$ one now defines partial maps $\rho_i : \mathbb{V} \to H^{1}(X_0,Q/Q(i+1))$ inductively \cite[Definition~3.2.5]{MR3075783}. Let $\mathbf{v} = ([\gamma_1],\ldots,[\gamma_r]) \in \mathbb{V}$. For $i = 1$ let $\rho_1(\mathbf{v}) = [\gamma_1]$. For $i > 1$, lift $\rho_{i-1}(\mathbf{v})$ to some element $[\Gamma_i] \in H^{1}(X_0,Q/Q(i+1))$ under the natural map $H^{1}(X_0,Q/Q(i+1)) \to H^{1}(X_0,Q/Q(i))$, \emph{when this is possible}, and then set $\rho_i(\mathbf{v}) = [\Gamma_i][\gamma_i]$. As long as $[\Gamma_i]$ exists, this product is well-defined since $Q(i)/Q(i+1)$ is central in $Q/Q(i+1)$. If such a lift $[\Gamma_i]$ does not exist, declare $\rho_i$ undefined at $\mathbf{v}$.

Each of these partial maps $\rho_i : \mathbb{V} \to H^{1}(X_0,Q/Q(i+1))$ turns out to be surjective (cf.~\cite[Proposition 3.2.6]{MR3075783}, \cite[Lemma 3.10]{Litterick2018}), and we set $\rho = \rho_r$. As a particular corollary, if $H^{1}(X_0,V_i) = \{0\}$ for all levels $V_i$ then $H^{1}(X_0,Q) = \{0\}$, and all complements to $Q$ in $QX$ are $Q$-conjugate to $X$. Similarly, in the setting of Lemma~\ref{lem:bn-cn}(ii) with $X_0$ of type $C_n$, if $H^{1}(X_0,V_i^{[1]}) = \{0\}$ for all $i$ then no (abstract) complements of type $B_n$ arise.

Finally, $Q$ acts on itself, and if $q \in Q^{X_0}$ then $q$ serves to fuse together classes corresponding to different elements of $\mathbb{V}$. A full account is given in \cite[pp.~5292--5294]{Litterick2018}. Complete details will be given when we make use of this action in our proofs; for the moment, we note only that if $q$ maps to a point in $V_i^{X_0}$, then conjugation by $q$ induces $X_0$-module homomorphisms $V_j \to V_{i+j}$ and linear maps $H^{1}(X_0,V_j) \to H^{1}(X_0,V_{i+j})$ for each $j$, and thereby a map $\mathbb{V} \to \mathbb{V}$. Typically, this will fuse together classes in such a way that a subgroup class represented by $(k_1,\ldots,k_m) \in \mathbb{V}$ will also be represented by an element with $k_i k_j = 0$ for particular indices $i$ and $j$.

\subsection{From $Q$-conjugacy to $G$-conjugacy} \label{sec:qgconj}

The theory so far describes non-$G$-cr semisimple subgroups in parabolic subgroups $P = QL$ \emph{up to $Q$-conjugacy}; this is not the end of the story as elements of $G$ can fuse classes together. We recall some details from \cite[Section 3.5]{Litterick2018}.

Firstly, the torus $Z(L)$ acts on $Q$ and centralises each subgroup $X_0 \le L'$, inducing an action on $H^{1}(X_0,Q)$. Fixing a basis of each space $H^{1}(X_0,V_i)$ where $V_i$ is a level of $Q$, the map $\rho$ allows us to parametrise complements to $Q$ in $QX_0$ by certain $m$-tuples $(k_1,k_2,\ldots,k_m) \in \mathbb{V}$ for some $m$. The definition of $\rho$ involves many arbitrary choices of lift, but subject to appropriate choices, the action of $Z(L)$ can be viewed as multiplying the coordinates $k_i$ by appropriate scalars, fusing together all of the corresponding subgroup classes. The action of $Z(L)$ is fixed-point-free on the non-identity elements of $Q$ since $C_{G}(Z(L)) = L$, and thus this action will be non-trivial in each coordinate of $\mathbb{V}$.

Also, elements of $G$ can fuse classes of subgroups occurring in standard parabolic subgroups $P_I$ and $P_J$, for not necessarily distinct subsets $I$ and $J$ of simple roots. By Lemma~\ref{lem:associated}, the corresponding Levi subgroups $L_I$ and $L_J$ are $G$-conjugate in this case. In particular, we will be concerned with elements $w$ of the Weyl group $W(G) = N_G(T)/T$, with pre-image $\dot{w} \in N_G(T)$, such that ${\dot w} \cdot L_I = L_J$. In this case, $\dot{w}$ sends each root subgroup in the unipotent radical of $P_{I}$ to another root subgroup of $G$, which may or may not lie in $P_J$. Thus if we have a subgroup $X \le P_I$, and if $X \le U L_I$ for some subgroup $U \le Q_{I}$ with $\dot{w} \cdot U \le Q_{J}$, then $X$ is $G$-conjugate to a subgroup of $P_{J}$. Again, when we use arguments of this form in Sections~\ref{sec:A3}--\ref{sec:D4}, we will give full details of the relevant Weyl group elements and how they fuse classes together.

\subsection{Lie types of non-$G$-cr subgroups} \label{sec:subtypes}

We now describe those Lie types $\mathbf{X}$ of semisimple groups which admit non-zero cohomology on a relevant module.

\begin{lemma} \label{lem:subtypes}
Let $G$ be a simple algebraic group of exceptional type in characteristic $p$, such that $G$ has a non-$G$-cr semisimple subgroup of type $\mathbf{X}$ with simple components each of rank $3$ or more. Then $(G,\mathbf{X},p)$ appears in Table \ref{tab:subtypes}.
\end{lemma}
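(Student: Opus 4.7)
The plan is to apply the strategy of Section~\ref{sec:enumerating}. Let $X \le G$ be a non-$G$-cr semisimple subgroup whose simple factors each have rank at least $3$, and let $P = QL$ be a minimal parabolic subgroup of $G$ containing $X$, so that $P$ is proper and the image $X_{0}$ of $X$ in $L$ is $L$-irreducible. Then either $X$ is a complement to $Q$ in $QX_{0}$ as algebraic groups, or by Lemma~\ref{lem:bn-cn} we have $p = 2$, $X_{0}$ has type $C_{n}$, $X$ has type $B_{n}$, and $X$ arises as an abstract complement. In either case $X$ corresponds to a non-zero class in $H^{1}(X_{0},Q)$ or in $H^{1}(X_{0},Q^{[1]})$ respectively, and the surjectivity of the maps $\rho_{i}$ from Section~\ref{sec:rho} forces at least one shape module $V$ appearing in the filtration of $Q$ to satisfy $H^{1}(X_{0},V) \ne 0$, or in the twisted case $H^{1}(X_{0}, V^{[1]}) \ne 0$.

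This reduces the lemma to a finite enumeration. First, I would list the proper Levi subgroups $L$ of each exceptional $G$ having at least one simple factor of rank $\ge 3$, since each simple factor of $X_{0}$ has rank $\ge 3$ and embeds into a simple factor of $L$. Second, for each such $L$, I would use \cite{Litterick2018a,ThomasIrreducible} to enumerate the $L$-irreducible semisimple subgroups $X_{0}$ whose simple factors have rank $\ge 3$, noting that such an $X_{0}$ is a central product of irreducible subgroups of the simple factors of $L$. Third, using Section~\ref{sec:shape-modules} together with \cite[Lemma 3.1]{MR1329942}, I would compute the shape modules of $Q$ as $L$-modules, restrict each to $X_{0}$, and test the vanishing of $H^{1}$ and, in characteristic $2$, of its Frobenius twist. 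For the non-irreducible shape modules, which arise only when $(G,p) \in \{(G_{2},2),(G_{2},3),(F_{4},2)\}$, the exact sequence of Lemma~\ref{lem:exactseq} relates the cohomology of the shape module to that of its composition factors.

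Under the rank-$\ge 3$ hypothesis the restrictions of shape modules to simple factors of $X_{0}$ have bounded highest weight, so the relevant first cohomology can be read off from standard computations for simple algebraic groups in bad characteristic. Non-vanishing of $H^{1}$ is stringent, and excluding each triple $(G,\mathbf{X},p)$ not in Table~\ref{tab:subtypes} becomes a mechanical check. The main obstacle is the bookkeeping: for $G$ of type $E_{8}$ with $L$ of small rank, $Q$ has many levels and shape modules, and one must carefully track $L$-irreducible $X_{0}$ built as central products of irreducible subgroups in several simple factors of $L$, while also being careful not to overlook the twisted $C_{n} \to B_{n}$ phenomenon of Lemma~\ref{lem:bn-cn} in characteristic $2$. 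A convenient organising principle is to treat one simple factor of $X_{0}$ at a time, since the tensor-product structure of shape modules over central products reduces the non-vanishing of $H^{1}$ to the non-vanishing of $H^{1}$ on one tensor factor, which is the standard single-simple-group calculation.
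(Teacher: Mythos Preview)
Your plan is the same in spirit as the paper's argument, and the key structural observations (reducing to non-vanishing of $H^{1}$ on shape modules, the K\"unneth-type reduction for central products) are all there. Two points are worth noting.

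First, the paper does not carry out the full enumeration you sketch. For \emph{simple} types $\mathbf{X}$ it invokes \cite[Theorem~1]{Stewart21072013}, which already classifies the simple non-$G$-cr subgroups of exceptional groups. This collapses most of your bookkeeping instantly: only non-simple types remain, and since each simple factor of $X_{0}$ surjects onto a simple factor of $L'$ (by $L$-irreducibility), the only relevant Levi types are $A_{3}^{2}$, $A_{3}A_{4}$, $D_{6}$ and $D_{7}$, with a very short list of $L$-irreducible $X_{0}$. Your plan would eventually re-derive Stewart's result as a by-product, but that is a substantial amount of work you do not need to redo.

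Second, a small imprecision: you say you would enumerate Levi subgroups with ``at least one'' simple factor of rank $\ge 3$. In fact, since $X_{0}$ is $L$-irreducible it projects non-trivially, hence surjectively, to \emph{every} simple factor of $L'$; thus every simple factor of $L'$ has rank $\ge 3$. This is what makes the list of relevant Levi subgroups so short once the simple case is delegated to \cite{Stewart21072013}.
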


\begin{table}[htbp]
\caption{Types of semisimple non-$G$-cr subgroups} \label{tab:subtypes}
\centering
\begin{tabular}{
r|cccc
}
$\mathbf{X}$ \textbackslash\ $G$	& $E_{8}$		& $E_{7}$	& $E_{6}$	& $F_{4}$	\\ \hline
$A_{3}$, $C_4$, $D_4$	& $2$ 			& $2$		&			&			\\
$B_{3}$ 	 			& $2$ 			& $2$ 		& $2$ 		& $2$ 		\\
$C_{3}$ 	 			& $3$ 			&			&			&			\\
$B_{3}^2$, $B_{4}$ 		& $2$			&			&			& 
\end{tabular}
\end{table}

\begin{remark}
In the course of proving Theorem~\ref{THM:MAIN} we will prove the converse to Lemma~\ref{lem:subtypes}, that non-$G$-cr subgroups of each type do indeed exist. This was shown already in \cite{Stewart21072013} for simple types, and exhibiting a non-$G$-cr subgroup of type $B_3^2$ (Section~\ref{sec:B3}) completes the argument.
\end{remark}

\begin{proof} The simple types $\mathbf{X}$ which occur are given in \cite[Theorem 1]{Stewart21072013}, so we need only consider types with more than one component. For such a type $\mathbf{X}$, suppose that $G$ has a parabolic subgroup $P = QL$ with an $L$-irreducible subgroup $X_0$, also of type $\mathbf{X}$ (or of type $C_{n}$ when $\mathbf{X} = B_n$), with $H^{1}(X_0,Q) \neq \{0\}$. From the exact sequence of cohomology (Lemma~\ref{lem:exactseq}), we see that if $H^{1}(X_0,Q(i)/Q(i+1)) = \{0\}$ for each $i$, then $H^{1}(X_0,Q) = \{0\}$. Thus for a non-$G$-cr subgroup of type $\mathbf{X}$ to exist, we need $H^{1}(X_0,V) \neq \{0\}$ for some indecomposable $X_0$-module direct summand $V$ of a level of $Q$.

In \cite{Litterick2018a} and \cite{ThomasIrreducible}, an explicit list is given of the irreducible subgroups in each simple factor of a Levi subgroup of $G$. These are given up to conjugacy in the simple factor, except for factors of type $D_7$ in \cite{Litterick2018a} when conjugacy under a graph automorphism is also allowed. Given this, in each Levi subgroup $L$ of $G$, the $L$-irreducible semisimple subgroups are precisely the products of irreducible subgroups in each simple factor. In summary, the non-simple, semisimple $L$-irreducible subgroups $X_0$ arising in this way are as follows.

\begin{center}
\begin{tabular}{c|l}
$L$ & $X_0$ \\ \hline
$A_{3}^2$ & $X_0 = L'$ \\
$A_{3} A_{4}$ & $X_0 = L'$ \\
$D_{6}$ & $A_{3}^{2}$ via $(010,0) + (0,010)$ \\
$D_{7}$ & $A_{3}D_{4}$ via $(010,0) + (0,\lambda_1)$ \\
& $B_{3}^{2}$ via $(100,0) + (0,100)$ $(p \neq 2)$ or $0|((100,0) + (0,100))|0$ $(p = 2)$ \\
& $A_{3}B_{3}$ via $(010,0) + (0,100) + 0$ $(p \neq 2)$ or $(010,0) + (0,T(100))$ $(p = 2)$ \\
& $A_{3}B_{3}$ via $(010,0) + (0,001)$
\end{tabular}
\end{center}
It remains to show that $H^{1}(X_0,V) = \{0\}$ for each such $X_0$ and each relevant $V$, except for $X_0$ of type $B_3^2$ with $p = 2$. By the K\"{u}nneth formula for cohomology, if $U_i$ is a module for a reductive group $Y_i$ $(i = 1,2)$ then $H^{1}(Y_1 \times Y_2, U_1 \otimes U_2) \cong \bigoplus_{j = 0,1}H^{j}(Y_1,U_1) \otimes H^{1-j}(Y_2,U_2)$.

As an $L$-module, each level of $Q$ is a direct sum of shape modules $V_S$ as described in Section~\ref{sec:shape-modules}. The structure of such shape modules is known from prior work, e.g.~\cite[Lemma~3.1]{MR1329942}. Explicitly, up to taking duals, for a simple factor of $L$ of type $A_n$ we obtain irreducible modules $\lambda_1$, $\lambda_2$, $\lambda_3$; for type $D_n$ $(n \ge 4)$ we obtain $\lambda_1$, $\lambda_{n-1}$ and $\lambda_n$; for type $B_3$ we obtain $W(100)$ and the irreducible module $001$; for type $C_3$ we obtain $W(010)$; for type $E_6$ we obtain $\lambda_1$; and for type $E_7$ we obtain $\lambda_7$.

Once these shape modules $V_S$ are known, determining the action of the $L$-irreducible subgroups $X_0$ on $V_S$ is usually a matter of routine weight-space calculations. When $X_0$ is not simple, each simple factor of $X_0$ is contained in a proper Levi subgroup of $L$ (hence of $G$) whose derived subgroup is simple of type $A_{3}$, $A_{4}$ or $D_{4}$. The non-zero shape modules occurring have dimension at most $10$, and each $L$-module occurring in the filtration of $Q$ then restricts to $X_0$ as a tensor product of modules for the factors of $X_0$. In particular, in each decomposition $U_{1} \otimes U_{2}$ above, each factor $U_{i}$ has dimension at most $10$.

For most irreducible $X_0$-modules $V$ of relevance to us, the first cohomology group $H^{1}(X_0,V)$ has already appeared in the literature. For example, \cite[Lemma~4.6]{Stewart21072013} collates many such results. Generally, such cohomology groups can be determined from the structure of the corresponding Weyl module and a dimension-shifting argument, see \cite[Proposition 4.14]{MR2015057} and the subsequent remark there. In turn, the structure of Weyl modules can be computed either using S.\ Doty's GAP routine \cite{Dot1} or by using the Weyl character formula together with the known weights of irreducible modules of low dimension \cite{MR1901354}. As an elementary example, the Weyl module $W_{B_3}(100)$ has structure $100|0$, and dimension-shifting gives $H^{1}(B_3,100) \cong H^{0}(B_3,0) \cong K$.

In the end, for a simple factor $Y$ of $X_0$, we find that the only irreducible $Y$-modules $U$ with non-zero cohomology (and dimension at most $10$) occur for factors of type $B_{3}$ and $U \cong V_{B_3}(100)$. Now, with the exception of $B_{3}^{2} < D_{7}$ when $p = 2$, each such factor $B_{3}$ is contained in a proper Levi subgroup of type $D_{4}$, acting on the three $8$-dimensional modules as a tilting module $T(100)$ or $001$. Such a module has zero cohomology \cite[Theorem 1.1]{DonkinFiltration}, and thus if a non-$G$-cr subgroup $X$ of type $\mathbf{X}$ exists, then $p = 2$ and $X = B_{3}^2 < D_{7}$.
\end{proof}

Now that we have determined the possible types of non-$G$-cr semisimple subgroups $X$ occurring, we next describe the relevant embeddings $X_0 \to L$ for Levi subgroups $L$ of $G$ with $L$-irreducible image; this follows directly from the main results of \cite{Litterick2018a} and \cite{ThomasIrreducible}.

\begin{lemma} \label{lem:gcrsubs}
With $(G,\mathbf{X},p)$ as in Table~\ref{tab:subtypes}, let $L$ be a Levi subgroup of $G$ and $X_0 \le L$ be an $L$-irreducible semisimple subgroup, where $X_0$ has type $\mathbf{X}$, or $X_0$ has type $C_n$ when $\mathbf{X} = B_n$ $(n = 3 \text{ or } 4)$. Then either:
\begin{enumerate}[label=(\roman*)]
	\item $p = 3$ and $(X_0,L') = (C_3,D_7)$ with $H \to L'$ via $010$, or
	\item $p = 2$ and the inclusion $X_0 \to L'$ is described in Table~\ref{tab:gcrsubs}.
\end{enumerate}
\end{lemma}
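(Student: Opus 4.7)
The plan is to reduce this lemma to a direct inspection of the classifications of irreducible subgroups in simple classical and small exceptional groups given in \cite{Litterick2018a} and \cite{ThomasIrreducible}. The key reduction, already used in the proof of Lemma~\ref{lem:subtypes}, is that an $L$-irreducible semisimple subgroup $X_0$ of $L$ is a central product of irreducible subgroups in the simple factors of $L'$. This splits the problem into two independent inputs: the list of $G$-conjugacy classes of Levi subgroups $L$ of $G$, and, for each simple factor of $L'$, the list of irreducible subgroups of the relevant Lie type and characteristic.

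First I would enumerate the $G$-conjugacy classes of Levi subgroups $L$ whose derived subgroup $L'$ admits a simple factor of rank at least $3$; by inspection of the Dynkin diagrams these are few in number. For each such $L$ and each simple factor $L_i$ of $L'$, I would read off from \cite{Litterick2018a, ThomasIrreducible} the irreducible subgroups of $L_i$ whose Lie type lies in $\{A_3, B_3, C_3, C_4, B_4, D_4\}$, subject to the characteristic restriction imposed by Table~\ref{tab:subtypes}. Taking central products across simple factors of $L'$ then yields every candidate $L$-irreducible subgroup $X_0$, which I would match with the entries of Table~\ref{tab:gcrsubs}.

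Part (i) is handled first and is essentially immediate: Table~\ref{tab:subtypes} forces $(G,\mathbf{X},p) = (E_8, C_3, 3)$, and among Levi subgroups of $E_8$ of rank at least $3$ the only one whose derived subgroup admits a rank-$3$ irreducible symplectic subgroup in characteristic~$3$ is $L'=D_7$, with the unique embedding $C_3 \to \SO_{14}$ corresponding to the irreducible module $010$. For part (ii) the characteristic is $2$ and one must additionally admit simple factors of type $C_n$ when classifying candidates that will yield $B_n$-type non-$G$-cr subgroups, as sanctioned by Lemma~\ref{lem:bn-cn}; the remaining enumeration is routine but sizeable, running across the $A_3$-, $A_3A_4$-, $D_6$- and $D_7$-Levis identified in the proof of Lemma~\ref{lem:subtypes}, together with the Levi subgroups containing an irreducible simple $\mathbf{X}$.

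The principal obstacle is bookkeeping rather than mathematics: one must respect the conventions for identifying simple roots of Levi subgroups fixed in Section~\ref{sec:roots-levis}, distinguish $G$-conjugacy classes which are only fused by outer automorphisms of $L'$ not induced by $N_G(L)$ (notably for $D_7$-Levis in $E_8$, where the graph automorphism swaps two classes that remain distinct in $G$), and place composite types such as $B_3^2$ with their factors in distinct simple factors of $L'$ so that the factors centralise one another. With these conventions consistently applied, the lemma follows by cross-referencing each candidate with Table~\ref{tab:gcrsubs}.
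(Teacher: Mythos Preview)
Your proposal is correct and takes essentially the same approach as the paper, which simply states that the lemma ``follows directly from the main results of \cite{Litterick2018a} and \cite{ThomasIrreducible}'' without further elaboration; you have spelled out what this direct inspection entails. One small correction: the list you cite from the proof of Lemma~\ref{lem:subtypes} (the $A_3^2$-, $A_3A_4$-, $D_6$- and $D_7$-Levis) enumerates Levis whose non-simple $L$-irreducible subgroups were determined there, but the subgroups $X_0 = L'$ appearing in that list are not of any type $\mathbf{X}$ in Table~\ref{tab:subtypes}; the relevant content for the present lemma is rather the diagonal $A_3$ in $A_3^2$ and the simple types inside $D_6$ and $D_7$, together with all the simple-factor Levis you mention separately.
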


\begin{table}[htbp]
\caption{Levis containing relevant irreducible subgroups $X_0$ when $p=2$} \label{tab:gcrsubs}
\begin{minipage}{0.56\linewidth}
\small
\begin{tabular}{c|l}
$L'$ & Embedding $X_0 \to L'$ \\ \hline
$B_{3}$ & $A_{3}$ via $010 + 0$ \\
$C_{3}$ & $A_{3}$ via $010$ \\
$D_{4}$ & $B_{3}$ via $T(100)$ \\
		& $B_{3}$ via $001$ (2 classes) \\
$A_{5}$ & $C_{3}$ via $100$ \\
		& $A_{3}$ via $010$ \\
$D_{5}$ & $B_{4}$ via $T(1000)$ \\
$D_{6}$ & $A_{3}$ via $010 + 010^{[r]}$ ($r \neq 0$; $2$ classes) \\
$A_{3}^2$ & $A_{3}$ via $(100^{[r]},100^{[s]})$ or $(100^{[r]},001^{[s]})$ $(rs=0)$
\end{tabular}
\end{minipage}
\begin{minipage}{0.36\linewidth}
\small
\begin{tabular}{c|l}
$L'$ & Embedding $X_0 \to L'$ \\ \hline
$E_{6}$	& $C_{4} < F_4$  \\
		& $D_{4} < C_4 < F_4$  \\
$A_{7}$ & $C_{4}$ via $1000$ \\
		& $D_{4}$ via $1000$ \\
		& $B_{3}$ via $001$ \\
$D_{7}$ & $C_{3}$ via $010 + 0$ \\
		& $B_{3}^{2}$ via $0|((100,0) + (0,100))|0$ \\
		& $B_{3}$ via $0|(100 + 100^{[r]})|0$ $(r \neq 0)$ \\
		& $A_{3}$ via $101$
\end{tabular}
\end{minipage}
\end{table}

Finally, with Lemma~\ref{lem:gcrsubs} telling us which parabolic subgroups $P = QL$ to focus on, we consider the action of each relevant $L$-irreducible subgroup $X_0$ on the unipotent radical $Q$; in particular we need to know how $X$ acts on each shape module $V_S$ occurring in the levels of $Q$.

\begin{proposition} \label{prop:h1}
Let $X_0$ be a $G$-cr semisimple subgroup of $G$ of type $\mathbf{X}$ as in Lemma~\ref{lem:gcrsubs}, and let $P = QL$ be a parabolic subgroup of $G$, where $X_0 \le L$ is $L$-irreducible. If $H^{1}(X_0,V^{[1]}) \neq \{0\}$ for some $X_0$-summand $V$ of a shape module $V_S$ occurring in a level of $Q$, then either:
\begin{enumerate}[label=(\roman*)]
	\item $p = 3$ and $(X_0,L',V_S,V) = (C_3,D_7,V_{D_7}(\lambda_1),V_{C_3}(010))$ with $\dim H^{1}(X_0,V) = 1$, or
	\item $p = 2$ and $X_0$, $L'$, $V_S$ and $V$ appear in Table~\ref{tab:h1}.
\end{enumerate}
In (ii), if $(X_0,V) = (D_4,V_{D_4}(\lambda_2))$ then $\dim H^{1}(X_0,V) = 2$, otherwise $\dim H^{1}(X_0,V^{[1]}) = 1$. In the latter case we also have $\dim H^{1}(X_0,V) = 1$ unless $X_0$ has type $C_n$ and $V_S \downarrow X_0$ has a high weight $\lambda_1$ (cf.~Lemma \ref{lem:bn-cn}).
\end{proposition}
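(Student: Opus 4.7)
The plan is to enumerate all relevant quadruples $(X_0,L',V_S,V)$ arising from Lemmas \ref{lem:subtypes} and \ref{lem:gcrsubs}, and to compute $H^{1}(X_0, V^{[1]})$ in each case by a direct calculation. First I would fix a pair $(X_0, L')$ from the list in Lemma \ref{lem:gcrsubs} (together with the single $p = 3$ case $(C_3, D_7)$), and then run over the parabolic subgroups $P = QL$ of $G$ with $X_0 \le L$ an $L$-irreducible subgroup. For each such $P$, the $L$-module structure of $Q$ is determined by the shape modules $V_S$ (Section~\ref{sec:shape-modules}), and the high weights of $L'$ on each $V_S$ were already described in the proof of Lemma \ref{lem:subtypes}: on each classical simple factor of $L'$ they are drawn from a short explicit list of small fundamental weights (or $W(100)$ for $B_3$ when $p=2$), with all summands of dimension at most $10$.

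Next, for each shape module $V_S$ I would restrict to $X_0$ via the explicit embedding in Lemma \ref{lem:gcrsubs} and decompose into indecomposable $X_0$-summands $V$. When $X_0$ is simple this is a routine weight-space calculation using the known embedding of $X_0$ into the natural module of its surrounding classical factor, together with standard identities for $\lambda_2, \lambda_3, \lambda_{n-1}, \lambda_n$. For the one non-simple case $X_0 = B_3^2 < D_7$ the shape modules restrict as outer tensor products, so the K\"unneth formula
\[ H^{1}(Y_1 \times Y_2, U_1 \otimes U_2) \;\cong\; \bigoplus_{j = 0,1} H^{j}(Y_1, U_1) \otimes H^{1-j}(Y_2, U_2) \]
reduces the computation to the simple factors.

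For each indecomposable summand $V$ I would then compute $H^{1}(X_0, V)$ from the Weyl module structure and a dimension-shifting argument (\cite[Prop.~4.14]{MR2015057} and the remark that follows): when $W(\lambda) = V(\lambda)\,|\,M$ with $M$ a trivial module, one has $H^{1}(X_0, V(\lambda)) \cong M^{X_0}$. The Weyl module structure itself is obtained either from Doty's \texttt{GAP} routine \cite{Dot1} or from the Weyl character formula together with the tables of \cite{MR1901354}; most relevant small-dimensional cases are already collated in \cite[Lemma~4.6]{Stewart21072013}. The passage from $H^{1}(X_0, V)$ to $H^{1}(X_0, V^{[1]})$ uses the injection $H^{1}(X_0,V) \to H^{1}(X_0,V^{[1]})$ of \cite[Theorem~3.5.6]{MR3105754}, which is an isomorphism except precisely in the $C_n$/$\lambda_1$ situation singled out by Lemma \ref{lem:bn-cn}; this gives the final clause of the proposition.

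The main obstacle is bookkeeping rather than individual difficulty: the enumeration is long, and one must carefully distinguish summands whose twisted cohomology vanishes (for instance summands that restrict as tilting modules $T(\lambda)$, which are acyclic by \cite[Theorem~1.1]{DonkinFiltration}) from those that do contribute. The cases requiring the closest attention are $X_0 = A_3 < D_7$ via $101$, where the restrictions of $\lambda_1, \lambda_6, \lambda_7$ are intricate; $X_0 = D_4$ with $V = V_{D_4}(\lambda_2)$, where $H^{1}$ acquires dimension $2$; and the $B_3^2 < D_7$ case, where the uniserial structure $0\,|\,((100,0)+(0,100))\,|\,0$ of the natural module must be propagated through each shape module of $Q$ before applying K\"unneth.
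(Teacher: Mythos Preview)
Your proposal is correct and follows essentially the same approach as the paper: enumerate the pairs $(X_0,L')$, list the shape modules, restrict to $X_0$ via weight calculations (or citations to Seitz \cite{MR888704} for spin-module restrictions such as your $A_3 < D_7$ case), and compute $H^{1}$ via Weyl-module structure and dimension-shifting, using tilting acyclicity and K\"unneth where appropriate. The paper singles out $B_3 < A_7$ via $001$ (where the uniserial structure of $\wedge^2(001)$ must be pinned down) and $B_3^2 < D_7$ as the two worked examples; otherwise your outline and the paper's argument coincide.
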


\begin{table}[htbp]
\caption{Shape modules with non-zero cohomology, $p = 2$} \label{tab:h1}
{\centering \small
\begin{tabular}{c*{4}{|c}}
$G$ 	& $L'$	& $X_0 \le L$			& $V_S \downarrow L'$				& $V_S \downarrow X_0$ \\ \hline
$F_4$ 	& $B_3$ & $X_0 = L'$						& $100|0$	& $100|0$ \\
	 	& $C_3$ & $X_0 = L'$						& $001|100$ 	& $001|100$ \\ \hline
$E_6$ 	& $A_5$ & $C_{3}$ via $100$				& $\lambda_1, \lambda_5$	& $100$ \\ \hline
$E_7$ 	& $A_5$ & $C_{3}$ via $100$				& $\lambda_1, \lambda_5$	& $100$ \\
		&		&								& $\lambda_3$ 				& $100|001|100$ \\
	 	& 		& $A_{3}$ via $010$				& $\lambda_2, \lambda_4$  	& $101 + 0$ \\
	 	& $E_6$ & $C_4 < F_4$ 		& $\lambda_1, \lambda_6$	& $0100 + 0$ \\
	 	& 		& $D_4 < C_4 < F_4$				& $\lambda_1, \lambda_6$ 	& $0100 + 0$ \\ \hline
$E_8$ 	& $A_5$ & $C_{3}$ via $100$				& $\lambda_1, \lambda_5$	& $100$ \\
		&		&								& $\lambda_3$ 				& $100|001|100$ \\
	 	& 		& $A_{3}$ via $010$				& $\lambda_2, \lambda_4$ 	& $101 + 0$ \\
	 	& $A_3^2$ & $A_3$ via $(100,100^{[1]})$ & $(010,100)$  & $210$ \\
	 	&		&	$A_3$ via $(100^{[1]},100)$ & $(100,010)$	& $210$  \\
	 	& $E_6$ & $C_4 < F_4$ 					& $\lambda_1, \lambda_6$	& $0100 + 0$ \\
	 	& 		& $D_4 < C_4 < F_4$				& $\lambda_1, \lambda_6$ 	& $0100 + 0$ \\
	 	& $A_7$ & $C_4$ via $1000$				& $\lambda_1$ 	& $1000$ \\
	 	&		& $D_4$ via $1000$				& $\lambda_2$ 	& $0|0100|0$ \\
	 	&		& $B_3$ via $001$				& $\lambda_2$ 	& $0|100|010|100|0$  \\
	 	& $D_7$ & $B_3^2$ via $0|((100,0) + (0,100))|0$	& $\lambda_1$ 	& $0|((100,0) + (0,100))|0$ \\
	 	& 		& $B_3 < B_3^2$ via $0|(100 + 100^{[r]})|0$ $(r \neq 0)$	& $\lambda_1$ 	& $0|(100 + 100^{[r]})|0$ \\
	 	& 		& $A_3$ via $101$				& $\lambda_1$ 	& $101$
\end{tabular}
}
\end{table}

\begin{remark} \label{rem:nearly-all-exist}
In the course of proving Theorem~\ref{THM:MAIN} we will show, with two exceptions, that whenever $X_0$ has non-zero cohomology on a module $V$, this does in fact give rise to a non-$G$-cr subgroup of $G$. The exceptions occur in the $A_3^2$-parabolic subgroups of $E_8$, where $X_0$ has type $A_3$ and $V = V_{X}(210)$ occurs as a summand of a shape module, but $H^{1}(X_0,Q)$ nevertheless vanishes. This is caused by a shape module with non-zero second cohomology group, which by Lemma~\ref{lem:exactseq} obstructs the lifting of cocycles from a level of $Q$ to $Q$ itself.
\end{remark}

\begin{proof}[Proof of Proposition~\ref{prop:h1}]
As discussed in the proof of Lemma~\ref{lem:subtypes}, each shape module $V_S$ occurring is a Weyl module with a small range of possible high weights. In most cases, determining the action $V_S \downarrow X_0$ is a routine weight-space calculation, given the known embedding of $X_0$ into $L'$. For instance, when $L'$ has type $A_n$, all modules lie in the tensor algebra of the natural module $\lambda_1$, and thus $V_S \downarrow X_0$ can be calculated by applying constructions to $\lambda_1 \downarrow X_0$ such as taking tensor products, summands etc. In other cases, the weights of $X_0$ on $V_{L'}(\lambda)$ can be determined by identifying a maximal torus of $X_{0}$ as a sub-torus of a maximal torus of $L'$. In yet further cases, the action follows from previous work of Seitz \cite[Theorem 1]{MR888704}. For example, in the case of $C_3 < D_7$ via $010$ when $p = 2$, the action of $C_3$ on the shape module $V_S = V_{D_7}(\lambda_6)$ is given by \cite[Case IV$_8$, p.~283]{MR888704}. Similarly, for $A_3 < D_7$ via $101$ when $p = 2$, the action of $X_0$ on $V_{D_7}(\lambda_6)$ is given by \cite[Case S$_7$, p.~283]{MR888704}.

We now give details of two of the most involved cases, both with $G$ of type $E_8$. When $X_0$ has type $B_3$, embedded in $A_7$ via $001$ when $p = 2$, the three shape modules occurring have high weights $\lambda_1$, $\lambda_3$ and $\lambda_6$, hence restrict to $X_0$ as $001$, $\wedge^3(001)$ and $\wedge^2(001)$ respectively. Now $001$ is tilting for $X_0$ and therefore has trivial cohomology. The exterior cube $\wedge^3(001)$ is self-dual of dimension $56$ and its weights are sums of triples of pairwise-distinct weights of $001$; it follows that $\wedge^3(001)$ has a high weight $101$. Inspecting \cite{MR1901354}, we find that $W_{X_0}(101)$ is irreducible of dimension $48$ in characteristic $2$; the highest remaining weight is $001$. Since $101$ and $001$ are tilting in characteristic $2$, it follows that $\wedge^3(001)$ splits as a direct sum $101 + 001$, and has trivial cohomology. Finally, similar calculations show that $\wedge^2(001)$ has composition factors $0^2/100^2/010$. In this case, the module turns out to be uniserial with socle series $0|100|010|100|0$. General theory tells us, for instance, that $\wedge^2(001)$ admits a unique non-zero $X_0$-homomorphism to $K$, up to scalars, since $001$ supports a unique nondegenerate alternating bilinear form up to similarity. However the most straightforward way to determine the submodule structure is computationally, working with explicit matrices generating a sufficiently large finite subgroup such as $\SO_{7}(4) \le X_0$, for which the module will still be uniserial.

Now that we know $\wedge^2(001) = 0|100|010|100|0$, we can work out its first cohomology group. The inclusion $100|0 \to \wedge^2(001)$ gives a long exact sequence of cohomology, part of which is
\[ H^{0}(X_0,0|100|010) \to H^{1}(X_0,100|0) \to H^{1}(X_0,\wedge^2(001)) \to H^{1}(X_0,0|100|010). \]
The first group here is zero, as is the right-hand group since $0|100|010$ is the dual of a Weyl module \cite[Lemma~7.1.2(ii)]{MR2044850} and such modules have zero first cohomology \cite[Theorem 1]{MR804233}. Thus the middle groups are isomorphic, and $H^{1}(X_0,100|0) = K$ since $T(100) = 0|100|0$, which has zero cohomology [\emph{loc.\ cit.}].

For a final example, again with $G$ of type $E_8$, consider $X_0$ of type $B_3^2$, embedded into $L'$ of type $D_7$ via the module $V = 0|( (100,0) + (0,100)) | 0$. Then $\dim H^{1}(X_0,V) = 1$: Consider the short exact sequence $\{0\} \to (100,0)|0 \to V \to 0|(0,100) \to \{0\}$ and the associated long exact sequence. We have $H^{0}(X_0,0|(100,0)) = H^{1}(X_0,0|(100,0)) = \{0\}$, the latter since it is the dual of a Weyl module. It follows that $H^{1}(X_0,(100,0)|0) \cong H^{1}(X_0,V) \cong K$, as claimed. Finally, the module $V_{\lambda_6}(\lambda_6) \downarrow X_0$ is a tensor product of spin modules for each factor, by \cite[Proposition 2.7]{MR1329942}, in particular it is irreducible and has zero cohomology, so does not appear in Table~\ref{tab:h1}.
\end{proof}

We single out a particular case of Proposition~\ref{prop:h1}, both because it is a more involved calculation, and because we require additional details when classifying subgroups of type $B_3$ in Section~\ref{sec:B3}.
\begin{lemma} \label{lem:c3-wedge-cube-h1}
Let $p = 2$, let $X_0$ be simple of type $C_3$ and let $M = \bigwedge^{3} V_{X_0}(100)$. Then $H^{1}(X_0,M)$ vanishes, and $H^{1}(X_0,M^{[1]})$ is $1$-dimensional. The inclusion $200 \to M^{[1]}$ induces an isomorphism of first cohomology groups, and the map $M^{[1]} \to 200$ induces the zero map on the first cohomology groups.
\end{lemma}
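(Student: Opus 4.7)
The plan is to first pin down the explicit module structure of $M$ in characteristic $2$, and then to compute the two cohomology groups by combining good-filtration arguments for $M$ with long exact sequences and a direct construction for $M^{[1]}$.

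\smallskip

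\emph{Structure of $M$.} Let $V = V_{X_0}(100)$ be the $6$-dimensional natural symplectic module for $X_0 = C_3$. Contraction with the symplectic form gives an $X_0$-equivariant surjection $\bigwedge^3 V \twoheadrightarrow V$ whose kernel is the Weyl module $W_{X_0}(\lambda_3)$, of dimension $14$. In characteristic $2$ the irreducible $V_{X_0}(001)$ has dimension $8$ and $W_{X_0}(\lambda_3)$ is uniserial of shape $001|100$. Combined with self-duality of $M$ (from $V \cong V^*$ via the symplectic form), this forces the socle series $M = 100|001|100$.

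\smallskip

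\emph{Vanishing of $H^1(X_0, M)$.} I exhibit a good filtration: the irreducible submodule $V_{X_0}(100) \subset M$ equals $H^0_{X_0}(\lambda_1)$, and the quotient $M/V_{X_0}(100)$ has socle series $100|001$, matching $H^0_{X_0}(\lambda_3)$. Kempf vanishing then gives $H^i(X_0, M) = 0$ for $i \geq 1$, and in particular $H^1(X_0, M) = 0$.

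\smallskip

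\emph{Computing $H^1(X_0, M^{[1]})$ and the socle iso.} The Frobenius twist $M^{[1]}$ has composition factors $V(200), V(002), V(200)$ with the same uniserial shape as $M$, but it admits no good filtration (since $V(200), V(002)$ are Frobenius twists with non-restricted highest weights). The socle inclusion yields a short exact sequence $0 \to V(200) \to M^{[1]} \to 200|002 \to 0$, and since $H^0$ vanishes on every non-trivial term, the associated long exact sequence produces an injection $H^1(X_0, V(200)) \hookrightarrow H^1(X_0, M^{[1]})$. The key input is $H^1(X_0, V(200)) \cong K$, which I would establish by producing an explicit non-split extension of $K$ by $V(200)$. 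One construction is via the exceptional isogeny $\phi \colon C_3 \to B_3$ in characteristic $2$: the Weyl module $W_{B_3}(\lambda_1) = 100|0$ is a non-split extension realising $H^1(B_3, V_{B_3}(100)) \cong K$, and its pullback along $\phi$ gives a non-split extension of $K$ by $V_{C_3}(200)$. A parallel analysis of the quotient $M^{[1]} \twoheadrightarrow V(200)$, combined with vanishing of $H^1(X_0, V(002))$ on the kernel $V(002)|V(200)$ (which can be read off similarly from Weyl-module structure), then shows that the injection is an isomorphism; hence $H^1(X_0, M^{[1]}) \cong K$ and the socle inclusion induces an isomorphism on $H^1$. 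The statement for the head projection is then formal: the composition $V(200) \hookrightarrow M^{[1]} \twoheadrightarrow V(200)$ must vanish, since otherwise by simplicity of $V(200)$ it would be an isomorphism, exhibiting $V(200)$ as a direct summand of $M^{[1]}$ and contradicting its non-split uniserial structure. Combined with the socle iso on $H^1$, this forces the head projection to induce the zero map on $H^1$.

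\smallskip

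\emph{Main obstacle.} The hard part is the explicit identification $H^1(X_0, V(200)) \cong K$ in characteristic $2$. Kempf vanishing does not apply to this Frobenius-twisted module, and controlling its first cohomology requires either the exceptional-isogeny construction sketched above, a careful application of the Lyndon–Hochschild–Serre spectral sequence for the Frobenius kernel $G_1 \triangleleft X_0$ (with a delicate identification of $H^1(G_1, K)$ as an $X_0$-module in bad characteristic), or an explicit verification using \textsc{Magma}. Once this single dimension count is in hand, the remainder of the argument is formal.
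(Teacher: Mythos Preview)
Your proof has solid ingredients but a genuine gap at the step where you claim the injection $H^1(X_0,200) \hookrightarrow H^1(X_0,M^{[1]})$ is an isomorphism. To close this you need $H^1(X_0, 200|002) = 0$ for the quotient $M^{[1]}/\mathrm{soc}$, but your sketch (``vanishing of $H^1(X_0,V(002))$ on the kernel $V(002)|V(200)$'') does not deliver this. Indeed the kernel of the head projection is $002|200$, and from $0 \to 200 \to 002|200 \to 002 \to 0$ together with $H^0(X_0,002)=H^1(X_0,002)=0$ one obtains $H^1(X_0,002|200) \cong H^1(X_0,200) \cong K$, \emph{not} zero. So this route only yields $\dim H^1(X_0,M^{[1]}) \le 2$, which is not enough. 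Your slick final argument ($p\circ i = 0$ forces $H^1(p)=0$ once $H^1(i)$ is an isomorphism) is correct, but it \emph{presupposes} the isomorphism rather than helping to establish it.

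The paper plugs exactly this hole by computing the Weyl module $W_{X_0}(002)$ explicitly: it is uniserial with socle series $002|200|020|200|000|101|000$. A non-zero class in $H^1(X_0,200|002)$ would dualise to a uniserial module $002|200|000$, necessarily a quotient of $W_{X_0}(002)$; but the third socle layer of $W_{X_0}(002)$ is $020$, not $000$, so no such quotient exists and $H^1(X_0,200|002)=0$. With this in hand, the long exact sequence for $0 \to 200 \to M^{[1]} \to 200|002 \to 0$ gives the isomorphism directly. (The paper then proves the head map is zero by a separate exact-sequence argument using the computation $H^1(X_0,002|200)\cong K$ above; your functoriality argument is a cleaner alternative once the isomorphism is known.) Your good-filtration proof that $H^1(X_0,M)=0$ and your exceptional-isogeny construction of $H^1(X_0,200)\cong K$ are both valid and pleasantly different from the paper's treatment.
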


\begin{proof}
The first statement holds since $M = 100|001|100$, which has no composition factors with a non-zero first cohomology group. Note, however, that $\dim H^{1}(X_0,200) = 1$. Now, direct computation (e.g.~using \cite{Dot1}) shows that the Weyl module $W(002) = 002|200|020|200|000|101|000$, a uniserial $X_0$-module. Thus $W(002)$ has no indecomposable quotient with composition factors $002/200/000$. Taking duals, we see that $H^{1}(X_0,200|002)$ vanishes.

Next, the short exact sequence $\{0\} \to 200 \to M^{[1]} \to 200|002 \to \{0\}$ induces an exact sequence $\{0\} \to H^{1}(X_0,200) \to H^{1}(X_0,M^{[1]}) \to H^{1}(200|002) = \{0\}$, which shows that $H^{1}(X_0,M^{[1]}) \cong H^{1}(X_0,200) \cong K$. This also shows that the map $200 \to M^{[1]}$ induces an isomorphism on first cohomology groups.

Finally, the submodule $002|200$ of $M^{[1]}$ has one-dimensional first cohomology group, which we see from the short exact sequence $\{0\} \to 200 \to 002|200 \to 002 \to \{0\}$ inducing $\{0\} \to K \to H^{1}(X_0,002|200) \to H^{1}(X_0,002) = \{0\}$. Then the exactness of $\{0\} \to H^{1}(X_0,002|200) \to H^{1}(X_0,M^{[1]}) \to H^{1}(X_0,200)$ shows that the map $M^{[1]} \to 200$ induces the zero map on cohomology groups.
\end{proof}

\subsection{Exhibiting non-$G$-cr subgroups} \label{sec:exhibiting}

In many places, studying first cohomology will limit the number of classes of non-$G$-cr subgroups of a given type occurring. Here, we collect results which will be useful for showing that such subgroups do in fact exist. In places, we will be aware of non-$M$-cr subgroups when $M$ is a proper reductive subgroup of $G$, and it will be helpful to know when such a subgroup is also non-$G$-cr; this is not automatic.

\begin{lemma}[{\cite[Lemma~2.6 and Corollary~3.21]{MR2178661}}] \label{lem:BMR}
Let $S$ be a linearly reductive subgroup of a connected reductive algebraic group $G$. Then $S$ is $G$-cr, and if $H = C_G(S)^\circ$ then a subgroup of $H$ is $H$-cr if and only if it is $G$-cr.
\end{lemma}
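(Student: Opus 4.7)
The plan is to address the two parts of the lemma separately, using the cohomological framework of Section~\ref{sec:complements} for the first, and a correspondence between parabolic subgroups for the second. For Part~(i), let $P = QL$ be a parabolic subgroup of $G$ containing $S$, and let $S_0$ denote the image of $S$ under projection onto $L$. As a homomorphic image of a linearly reductive group, $S_0$ is also linearly reductive, so $S$ corresponds to an element of $H^1(S_0,Q)$ classifying complements to $Q$ in $QS_0$ (by Lemma~\ref{lem:bn-cn}; the abstract-complement subtlety there does not arise, since in positive characteristic $S^{\circ}$ is necessarily a torus by Nagata's theorem and thus cannot be of type $C_n$). The filtration $Q = Q(1) \ge Q(2) \ge \cdots$ has successive quotients $V_i$ which are rational $L$-modules, hence rational $S_0$-modules; linear reductivity of $S_0$ forces $H^1(S_0,V_i) = \{0\}$ for each $i$. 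Iteratively applying the long exact sequence of Lemma~\ref{lem:exactseq}(ii) then yields $H^1(S_0,Q) = \{0\}$, so $S$ is $Q$-conjugate to $S_0 \le L$, and therefore $S$ is $G$-cr.

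For Part~(ii), I would first invoke Richardson's theorem that the centraliser of a linearly reductive subgroup of a reductive group is reductive, so $H = C_G(S)^{\circ}$ is reductive. The key tool is the bijective correspondence between parabolic subgroups of $H$ and parabolic subgroups of $G$ containing $S$ in a Levi factor: each parabolic $P_H$ of $H$ has the form $P \cap H$ for such a parabolic $P$ of $G$, with corresponding Levi decomposition $L_H = L \cap H$. The forward direction is then immediate: if $X \le H$ is $G$-cr and $X \le P_H = P \cap H$, then $X \le P$, so $X$ lies in some Levi $L$ of $P$, and hence in $L \cap H = L_H$, a Levi factor of $P_H$. For the converse, suppose $X \le H$ is $H$-cr and $X \le P$ for some parabolic of $G$. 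Since $S$ centralises $X$, we have $\langle S,X \rangle \le P$; by Part~(i), $S$ is $G$-cr, and this allows us to replace $P$ by a conjugate so that $S$ lies in a chosen Levi $L$ of $P$. Then $P \cap H$ is a parabolic of $H$ containing $X$, and $H$-crness of $X$ places $X$ in $L \cap H \le L$, so $X$ is $G$-cr.

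The most delicate step, and the main obstacle, is establishing the parabolic correspondence underpinning Part~(ii). A clean treatment uses the cocharacter-theoretic (R-parabolic) viewpoint: cocharacters of $H$ are precisely those cocharacters of $G$ that commute with $S$, and the parabolic subgroups of $H$ they define coincide with the intersections $P_\lambda \cap H$. To make the converse direction fully rigorous one must argue that whenever $X \le P_\lambda$ for some cocharacter $\lambda$ of $G$, the $G$-complete reducibility of $S$ furnishes a conjugate cocharacter $\lambda'$ centralising $S$ with $P_{\lambda'} \supseteq X$; this is where the real content of Part~(i) is reused. The cohomology vanishing in Part~(i) is comparatively routine, since linear reductivity in characteristic $p$ forces $S^{\circ}$ to be a torus with $|S/S^{\circ}|$ coprime to $p$, so Maschke-type reasoning applies to every rational $S_0$-module.
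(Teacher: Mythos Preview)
The paper does not supply its own proof of this lemma; it is quoted directly from Bate--Martin--R\"ohrle \cite[Lemma~2.6 and Corollary~3.21]{MR2178661}, with only the remark afterwards that the torus case goes back to Liebeck--Seitz--Testerman. So there is no in-paper argument to compare against.

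That said, your sketch is broadly in the spirit of the original source. For Part~(i), your cohomology-vanishing argument is sound: $S \cap Q = 1$ since $S$ is linearly reductive and $Q$ unipotent, and then $H^{1}(S_0,V_i) = \{0\}$ for each level forces $H^{1}(S_0,Q) = \{0\}$. Your appeal to Lemma~\ref{lem:bn-cn} is slightly off-target, since that lemma is stated only for \emph{simple} $X_0$; the parametrisation of complements by $H^{1}$ holds more generally, but you should cite it from elsewhere (e.g.\ Serre's \emph{Galois Cohomology} or \cite{MR3105754} directly) rather than from Lemma~\ref{lem:bn-cn}.

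For Part~(ii), your cocharacter/R-parabolic outline is exactly the framework Bate--Martin--R\"ohrle use. The gap you correctly flag is real: in the forward direction you write ``$X$ lies in some Levi $L$ of $P$, and hence in $L \cap H$'', but $G$-completeness only places $X$ in \emph{some} Levi of $P$, not the particular $L$ containing $S$. One needs the extra step that $X$ centralises $S$, so $\langle S,X\rangle$ lies in $P$, and then a conjugacy argument (using that Levi factors are $R_u(P)$-conjugate together with the linear reductivity of $S$) lands $X$ in the correct Levi. This is precisely the content of \cite[Corollary~3.21]{MR2178661}, and your final paragraph shows you appreciate where the work lies.
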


In the particular case that $S$ is a torus, $C_G(S)$ is a Levi subgroup of $G$ and the result in this case was first proved in \cite[Proposition~3.2]{MR2167207}.

The following is a minor generalisation of an example of M.\ Liebeck \cite[Example 3.45]{MR2178661}.
\begin{proposition} \label{prop:orthog_sum}
Let $X$ be a group with a faithful, irreducible module $V$ of even dimension in characteristic $p = 2$, supporting a nondegenerate bilinear form.

Then the orthogonal direct sum $W = V \perp V$ realises $X$ as a non-$H$-cr subgroup of $H = \Sp(W)$, and of $\SO(W)$ if $V$ is orthogonal.
\end{proposition}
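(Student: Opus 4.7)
The plan is to produce a parabolic subgroup $P$ of $H$ containing $X$ and to show that $X$ lies in no Levi factor of $P$. Writing $B$ for the bilinear form on $V$ and $B_W$ for the orthogonal sum form on $W$, the diagonal subspace $\Delta V = \{(v,v) : v \in V\}$ satisfies $B_W((v,v),(w,w)) = 2 B(v,w) = 0$ in characteristic $2$, so $\Delta V$ is a maximal totally isotropic subspace. When $V$ is orthogonal (equipped with an $X$-invariant quadratic form $Q$), the sum quadratic form on $W$ similarly vanishes on $\Delta V$, making it maximal totally singular. The diagonal action of $X$ preserves $\Delta V$, so $X$ lies in the proper parabolic $P = \Stab_H(\Delta V)$.

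Next I would invoke the standard parametrisation of Levi factors of $P$: each Levi $L \cong \GL(\Delta V)$ is the simultaneous stabiliser of $\Delta V$ and of a maximal totally isotropic (respectively, totally singular) complement $U$ to $\Delta V$ in $W$, and conversely each such $U$ determines a unique Levi. Thus $X$ lies in some Levi of $P$ if and only if $X$ stabilises such a complement. Any totally singular subspace is totally isotropic for the associated bilinear form, so it suffices to show no $X$-stable totally isotropic complement exists; this handles both the $\Sp(W)$ and $\SO(W)$ cases simultaneously.

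The last step parametrises these complements via Schur's lemma. Identifying $W/\Delta V$ with $V$ via $(v,0) + \Delta V \mapsto v$, every $X$-stable vector-space complement to $\Delta V$ takes the form
\[ U_\phi = \{\,(v + \phi(v),\ \phi(v)) : v \in V\,\} \]
for a unique $\phi \in \operatorname{End}_{KX}(V)$. As $V$ is irreducible over the algebraically closed field $K$, Schur's lemma gives $\operatorname{End}_{KX}(V) = K$, so $\phi = c \cdot 1_V$ for some $c \in K$. A short computation in characteristic $2$ then yields
\[ B_W\bigl((v + cv,\ cv),\ (w + cw,\ cw)\bigr) = (1+c)^2 B(v,w) + c^2 B(v,w) = B(v,w), \]
which is nonzero for suitable $v, w$ by nondegeneracy of $B$. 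Hence no $X$-stable totally isotropic complement to $\Delta V$ exists, $X$ lies in no Levi of $P$, and $X$ is non-$H$-cr. The only step requiring real care is the Levi parametrisation in the orthogonal case in characteristic $2$, where the two $\SO(W)$-orbits of maximal totally singular subspaces and the distinction between isotropic and singular subspaces are both present; nevertheless the Schur-lemma obstruction above rules out all candidate complements uniformly.
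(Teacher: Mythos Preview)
Your argument is correct and is essentially the paper's own proof, reparametrised: both enumerate the $X$-stable subspaces of $W$ via Schur's lemma and compute the restricted form to find that only the diagonal $\Delta V$ is totally isotropic. The only difference is packaging---the paper concludes that $X$ lies in a \emph{unique} proper parabolic of $H$ (hence cannot lie in a Levi, since it would then also lie in the opposite parabolic), whereas you fix $P = \Stab_H(\Delta V)$ and rule out $X$-stable isotropic complements via the Levi parametrisation; your observation that ruling out isotropic complements simultaneously handles the orthogonal case is a nice economy over the paper's separate quadratic-form check.
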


\begin{proof}
We need only prove that $X$ is non-$H$-cr. Write $V'$, $V''$ for the left and right summands of $W$ above, and fix an isometric $X$-module isomorphism $\phi : V' \to V''$. Then the proper, non-zero $X$-stable subspaces of $W$ (other than $V''$ itself) all have the form $V_\lambda$, where
\[ V_\lambda = \{ v + \lambda \phi(v) \, : \, v \in V' \}. \]
Now, since $\phi$ is an isometry and $V'$ is orthogonal to $V''$, we have $(v + \lambda\phi(v),w + \lambda \phi(w)) = (1 + \lambda^2)(v,w)$. Since the bilinear form is nondegenerate and $p = 2$, this expression is zero for all $v,w \in V$ if and only if $\lambda = 1$. Thus $X$ preserves a unique nonzero totally isotropic subspace of $W$, hence $X$ lies in a unique parabolic subgroup of $H = \Sp(W)$ and is therefore non-$H$-cr. Similarly, if $V$ supports a nondegenerate quadratic form $q$ then $q(v + \lambda\phi(v)) = (1 + \lambda^2)q(v)$ for all $v \in V'$, and again we find that $V_{1}$ is the unique nonzero $X$-stable totally singular subspace of $W$, so $X$ is non-$\SO(W)$-cr.
\end{proof}

\begin{remark} \label{rem:a3d6-d4d8}
We will need this proposition in three situations: when $X = A_3 < H = D_6$ via $010 + 010$, when $X = B_3 < H = D_8$ via $001 + 001$ and when $X = D_4 < H = D_8$ via $1000 + 1000$. In each case this leads to two non-$H$-conjugate, non-$G$-cr subgroups of $H$, as can be seen for instance via their inequivalent actions on the half-spin modules for $H$; these subgroups will be conjugate under an outer automorphism of $H$. In general, one can prove using the orbit-stabiliser theorem, and the fact that $X$ is unique up to conjugacy in the full isometry group of the form on $W = V \perp V$, that either one or two $H$-conjugacy classes of subgroups will occur in this way.
\end{remark}

\subsection{Calculating centralisers} \label{sec:centralisers}

We now discuss some details involved in calculating $C_G(X)^{\circ}$ for each $X$ occurring in Theorem~\ref{THM:MAIN}. Let $P$ be minimal among parabolic subgroups of $G$ containing $X$, with Levi decomposition $P = QL$, and let $X_0$ be its image in $L$. To begin, note that elements of $X$ have the form $\phi(x)x$ for some cocycle $\phi : X_0 \to Q$ and $x \in X_0$. Thus when $Q$ is abelian, or more generally when the image $\phi(X)$ is central in $Q$, the fixed-point subgroups $Q^X$ and $Q^{X_0}$ coincide. In this case, $Q^{X_0}$ provides a connected unipotent subgroup of $C_G(X)$. Also when $Q$ is abelian, the group $Q^{X_0}$ itself is straightforward to determine from knowledge of its weights as an $X_0$-module, using \cite{MR1047327}. When $Q$ is not abelian, however, two issues arise: The group $Q^{X_0}$ need not simply correspond to the trivial $X_0$-composition factors in each level, and also the image $\phi(X_0)$ need not commute with $Q^{X_0}$, so determining $Q^{X}$ is more subtle.

To overcome the first issue, one can use the fact that $X_0$ is $G$-cr, which places useful constraints on the structure of $C_G(X_0)$; as discussed for instance in \cite[\S 6.2]{Litterick2018a}. In many cases of interest to us, Liebeck and Seitz \cite[p.\ 333]{MR1274094} have already determined $C_G(X_0)^{\circ}$, and we are able to use this to determine $Q^{X_0}$ and hence $Q^{X}$.

For the issue regarding the relationship between $Q^{X}$ and $Q^{X_0}$, this often comes down to showing that $\phi(X_0)$ commutes with specific elements of $Q^{X_0}$, for instance by using the Chevalley commutator formula. Frequently, this reduces to the fact that some subgroup $Q(j)$ is abelian and contains both $Q^{X_0}$ and $\phi(X_0)$. The most intricate calculations occur when $G$ has type $E_8$ and $X$ has type $B_3$, in which case we explicitly construct the relevant cocycles $\phi$ in terms of root elements of $G$ (Proposition~\ref{prop:B3inE8gens}), and thereby determine $Q^{X}$ directly.

The following will also be useful in calculating $C_{G}(X)^{\circ}$ for each subgroup $X$ in Theorem~\ref{THM:MAIN}.

\begin{lemma} \label{lem:rankofcents}
Let $G$ be a reductive algebraic group of rank $r$ with non-$G$-cr connected subgroup $X$, contained minimally in a parabolic subgroup $P = QL$ with $L$ of semisimple rank $s$. Then $C_G(X)$ has rank at most $r-s-1$.  
\end{lemma}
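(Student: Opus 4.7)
The plan is to argue by contradiction, assuming $C_G(X)$ has rank at least $r - s$, and combining the hereditary behaviour of $G$-complete reducibility through Levi subgroups (Lemma~\ref{lem:BMR}) with the associated-Levi property of Lemma~\ref{lem:associated}.

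First I would take a maximal torus $T_C$ of $C_G(X)^{\circ}$, so that $\dim T_C \ge r - s$ by hypothesis, and set $L_T := C_G(T_C)$. Then $L_T$ is a Levi subgroup of $G$ containing $X$, and since $T_C \le Z(L_T)^{\circ}$ its semisimple rank is at most $r - \dim T_C \le s$. Because $T_C$ is linearly reductive, Lemma~\ref{lem:BMR} forces $X$ to be non-$L_T$-cr, so $X$ lies in some proper parabolic $P_T = Q_T L_T'$ of $L_T$ whose Levi factor satisfies $\dim L_T' / Z(L_T')^{\circ} \le \text{ss.\ rank}(L_T) - 1 \le s - 1$.

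Next I would inflate $P_T$ to a parabolic of $G$: choose any parabolic $P_G$ of $G$ having $L_T$ as a Levi factor, and set $P^{\ast} := P_T \cdot R_u(P_G)$. This is a parabolic of $G$ (it contains a Borel of $G$, namely $B_T \cdot R_u(P_G)$ for any Borel $B_T$ of $L_T$ inside $P_T$), it has Levi factor $L_T'$, and it contains $X$. Now pick a parabolic $P_0$ of $G$ minimal subject to $X \le P_0 \le P^{\ast}$. Any strictly smaller parabolic of $G$ containing $X$ would again lie in $P^{\ast}$, contradicting this minimality, so $P_0$ is actually minimal among \emph{all} parabolics of $G$ containing $X$. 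Lemma~\ref{lem:associated} then says the Levi factor of $P_0$ is $G$-conjugate to $L$, hence has semisimple rank $s$. On the other hand, $P_0 \le P^{\ast}$ forces $R_u(P^{\ast}) \le P_0$ (since $P_0$ contains a Borel of $P^{\ast}$), whence $P_0/R_u(P^{\ast})$ is a parabolic of $L_T'$ and the semisimple rank of its Levi factor, which equals that of $P_0$, is at most the semisimple rank of $L_T'$, i.e.\ at most $s - 1$. This contradicts the equality with $s$.

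The obstacle I anticipate is not conceptual but bookkeeping-heavy: it lies in the last paragraph, where one must verify carefully that the minimality of $P_0$ among parabolics of $G$ inside $P^{\ast}$ containing $X$ really upgrades to global minimality (so that Lemma~\ref{lem:associated} applies), and that the descent $P_0 \leadsto P_0 / R_u(P^{\ast})$ correctly tracks semisimple rank. Everything else is a routine application of the fact that centralisers of tori are Levi subgroups and that $G$-complete reducibility is inherited in both directions along inclusions of Levi subgroups.
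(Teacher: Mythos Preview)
Your proof is correct and follows essentially the same route as the paper's: assume a torus of rank $r-s$ in $C_G(X)$, pass to its centraliser (a Levi subgroup $M$ of semisimple rank at most $s$), use Lemma~\ref{lem:BMR} to deduce $X$ is non-$M$-cr, and then inflate a proper parabolic of $M$ to a parabolic of $G$ whose Levi has semisimple rank at most $s-1$, contradicting Lemma~\ref{lem:associated}. The only differences are cosmetic: the paper cites \cite[Propositions~2.6.6, 2.6.7]{MR794307} for the inflation step where you construct $P^{\ast} = P_T \cdot R_u(P_G)$ by hand, and the paper absorbs your final descent to $P_0$ into the opening sentence (``it suffices to prove that $X$ is contained in a parabolic subgroup of $G$ whose Levi subgroups have semisimple rank less than $s$''). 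Your bookkeeping worry is unfounded: if $P' \subsetneq P_0$ is any parabolic of $G$ containing $X$, then automatically $P' \le P_0 \le P^{\ast}$, so the minimality of $P_0$ inside $P^{\ast}$ really is global minimality.
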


\begin{proof}
Suppose that $S \le C_G(X)$ is a torus of rank $r-s$. To obtain a contradiction, it suffices to prove that $X$ is contained in a parabolic subgroup of $G$ whose Levi subgroups have semisimple rank less than $s$, since all minimal parabolic overgroups of $X$ are associated (Lemma \ref{lem:associated}).    

Let $M = C_G(S)$. Then $M$ is a Levi subgroup of $G$ containing $X$. Since $X$ is non-$G$-cr, it follows from Lemma \ref{lem:BMR} that $X$ is non-$M$-cr. In particular, $X < P_M = Q_M L_M$ where $P_M$ is a proper parabolic subgroup of $M$ and $L_M$ is a Levi subgroup of $M$ with semisimple rank less than $s$. It follows from \cite[Propositions~2.6.6, 2.6.7]{MR794307}, that there exists a parabolic subgroup $\hat P$ of $G$ such that $P_M < \hat P$ and $L_M$ is a Levi subgroup of $\hat{P}$. Since $X < P_M < \hat{P}$, we have reached a contradiction. 
\end{proof}

\begin{lemma} \label{lem:unip-cent}
Let $X$ be a reductive subgroup of $G$. Then there exists a parabolic subgroup $P$ with Levi decomposition $P = QL$, minimal subject to containing $X$, such that $Q^{X}$ is a maximal connected unipotent subgroup of $C_{G}(X)$.

Furthermore, let $X_0$ be the image of $X$ under the projection $P \to L$, and let $n$ be the total number of trivial $X_0$-composition factors across all the levels $Q(i)/Q(i+1)$, $i \ge 0$. Then $\dim Q^{X} \le n$.
\end{lemma}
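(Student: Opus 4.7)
For the first statement, the plan is to combine the Borel--Tits theorem with a refinement step. Letting $C := C_G(X)^\circ$ and $U$ a maximal connected unipotent subgroup of $C$, I would exploit the fact that $U$ centralises $X$ to form the closed subgroup $XU \le G$; since $X$ is reductive and $X \cap U$ is both reductive and unipotent hence trivial, the unipotent radical of $XU$ equals $U$. By Borel--Tits there is a parabolic $P^*$ of $G$ with $XU \le P^*$ and $U \le R_u(P^*)$. To convert $P^*$ into a parabolic that is minimal over $X$, I would pick a Levi factor $L^*$ of $P^*$, let $X^*$ be the image of $X$ in $L^*$, choose a parabolic $P^*_L \le L^*$ that is minimal among those containing $X^*$, and set $P := R_u(P^*) \cdot P^*_L$. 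This $P$ is a parabolic of $G$ containing $X$ with $R_u(P) \supseteq R_u(P^*) \ni U$, and the image of $X$ in the Levi factor of $P$ is $L$-irreducible by construction, forcing $P$ to be minimal over $X$.

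Next I would show that $Q := R_u(P)$ satisfies $Q^X = U$. The containment $U \le Q \cap C_G(X) \subseteq Q^X$ is automatic and $Q^X$ is unipotent, so the remaining task is connectedness of $Q^X$, from which maximality of $U$ in $C$ gives equality. For this I would use the $L$-stable (hence $X$-stable) filtration $Q = Q(1) \supseteq Q(2) \supseteq \cdots \supseteq Q(r+1) = 1$ of Section~\ref{sec:shape-modules}, noting that the commutator bound $[Q(i),Q(j)] \le Q(i+j)$ renders each quotient $V_i := Q(i)/Q(i+1)$ central in $Q/Q(i+1)$. Induction on $i$, applied to the short exact sequence of fixed points with connected kernel $V_i^X$ (a linear subspace of $V_i$) and inductively connected image in $(Q/Q(i))^X$, then yields connectedness of $(Q/Q(i+1))^X$, and taking $i = r$ gives connectedness of $Q^X$.

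For the second statement, the same filtration is central, together with the key observation that the actions of $X$ and $X_0$ on each $V_i$ agree. Indeed, any $x \in X$ can be written as $x = q x_0$ with $q \in Q$ and $x_0 \in X_0$, and the commutator bound $[Q, Q(i)] \le Q(i+1)$ forces conjugation by $q$ to act trivially on $V_i$. Consequently $V_i^X = V_i^{X_0}$, whose dimension is bounded above by the number of trivial $X_0$-composition factors of $V_i$ by a standard Jordan--H\"older argument. The induced filtration $Q^X = Q(1)^X \supseteq Q(2)^X \supseteq \cdots$ has successive quotients $Q(i)^X/Q(i+1)^X$ injecting into $V_i^X$, and summing dimensions across $i$ yields $\dim Q^X \le n$.

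The hardest step is the first paragraph: one must correctly refine the parabolic produced by Borel--Tits to one that is minimal over $X$ while retaining the inclusion $U \le R_u(P)$. By contrast, the connectedness induction and the composition-factor bound in the remaining paragraphs are routine given the filtration of $Q$ by shape modules.
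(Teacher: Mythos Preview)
Your overall strategy coincides with the paper's. For the first assertion both arguments invoke Borel--Tits to place a maximal connected unipotent subgroup $U \le C_G(X)$ inside the unipotent radical of some parabolic containing $X$, and then pass to a minimal such parabolic using the fact that unipotent radicals only grow as parabolics shrink; your explicit refinement via a minimal parabolic of the Levi factor is a correct way to implement this step. For the dimension bound $\dim Q^{X} \le n$, your argument (comparing the $X$- and $X_0$-actions on each level via $[Q,Q(i)] \le Q(i+1)$, then filtering $Q^{X}$) is essentially identical to the paper's.

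There is, however, a gap in your inductive argument for connectedness of $Q^{X}$. Taking $X$-fixed points in the central extension $1 \to V_i \to Q/Q(i+1) \to Q/Q(i) \to 1$ yields only a \emph{left}-exact sequence: the map $(Q/Q(i+1))^{X} \to (Q/Q(i))^{X}$ need not be surjective, the obstruction being the connecting map into $H^{1}(X,V_i)$, and these cohomology groups are typically nonzero in exactly the situations studied in this paper. Thus the image is merely a closed subgroup of the inductively connected group $(Q/Q(i))^{X}$, and closed subgroups of connected unipotent groups can be disconnected in positive characteristic; the phrase ``inductively connected image'' is therefore unjustified and the induction does not go through as written. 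The paper's own proof does not attempt to establish connectedness of $Q^{X}$: it records $U \le Q^{X}$ and moves on, and in all later applications only the inclusion $U \le Q^{X}$ together with the bound $\dim Q^{X} \le n$ are actually used, yielding $\dim U \le n$.
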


\begin{proof}
The existence of $P$ follows directly from the Borel--Tits Theorem; if $U$ is a maximal connected unipotent subgroup of $C_{G}(X)$ then $U$ is the unipotent radical of $UX$ and thus there exists a parabolic subgroup $P$ of $G$ which contains $X$, and contains $U$ in its unipotent radical. Since $R_u(P_1) \ge R_u(P_2)$ whenever $P_1 \le P_2$ are parabolic subgroups, we can take $P$ to be minimal subject to containing $X$.

For the latter statement, the filtration $Q = Q(1) \ge Q(2) \ge \ldots$ is also a filtration by $X$-stable subgroups, since each subgroup is normal in $Q$ and $X_0$-stable. Now the relation $[Q(i),Q(j)] \subseteq Q(i+j)$ implies that the induced action of $Q$ on each level $Q(i)/Q(i+1)$ is trivial. Since elements of $X$ have the form $\phi(x)x$ with $x \in X_0$ and $\phi : X_0 \to Q$ a cocycle, it follows that $X$ and $X_0$ have precisely the same composition factors on each level, in particular the same number of trivial composition factors. Now the filtration of $Q$ by $X_0$-stable normal subgroups $Q(i)$ gives rise to a filtration of $Q^{X}$ by $X$-stable subgroups, and the corresponding quotients are trivial $X$-modules. The claim follows.
\end{proof}

\begin{lemma} \label{lem:maxparab}
Let $P$ be a maximal parabolic subgroup of $G$, and suppose that $P$ is minimal among parabolic subgroups containing a given non-$G$-cr subgroup $X$ of $G$. Then:
\begin{enumerate}
\item $P$ is the unique proper parabolic subgroup of $G$ containing $X$, \label{maxparab-i}
\item $N_G(X) \le P$ and $C_G(X)^\circ \le Q$. \label{maxparab-ii}
\end{enumerate}
\end{lemma}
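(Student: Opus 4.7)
The plan is to prove (i) by contradiction using cocharacter-and-root-group manipulations, and then to derive (ii) from (i) with help from Lemma~\ref{lem:rankofcents} and the Borel--Tits theorem.

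For (i) I would suppose $X \le P'$ for some proper parabolic $P' \ne P$. Choose a maximal torus $T \le P \cap P'$ and realise $P = P_\lambda$, $P' = P_\mu$ via cocharacters $\lambda, \mu$ of $T$. A direct root-set comparison shows $(P \cap P')R_u(P) = P_\nu$ for $\nu = N\lambda + \mu$ with $N$ sufficiently large; in particular this is a parabolic subgroup of $G$ sitting inside $P$ and containing $X$. Minimality of $P$ among parabolic overgroups of $X$ forces $P_\nu = P$, which is equivalent to $\mu$ pairing trivially with every root of the Levi $L$, i.e., $\mu$ centralises $L$. Since $P$ is maximal, $Z(L)^\circ$ is one-dimensional, so the only proper parabolics of $G$ with $L$ as a Levi are $P$ itself and its opposite $P^-$. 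As $P' \ne P$ by assumption, $P' = P^-$, whence $X \le P \cap P^- = L$. Then $X$ coincides with its own image $X_0$ in $L$, which by minimality of $P$ is $L$-irreducible, hence $L$-cr; by \cite[Corollary 3.22]{MR2178661} this makes $X$ a $G$-cr subgroup, contradicting the hypothesis.

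For (ii), the normaliser statement will follow immediately from (i): if $g \in N_G(X)$ then $X = gXg^{-1} \le gPg^{-1}$, a proper parabolic over $X$, which by (i) equals $P$, giving $g \in N_G(P) = P$; hence $C_G(X)^\circ \le P$. To upgrade this to $C_G(X)^\circ \le Q$ I would apply Lemma~\ref{lem:rankofcents}: since $P$ is maximal, $L$ has semisimple rank $r-1$ where $r$ is the rank of $G$, so $C_G(X)$ has rank at most $r - (r-1) - 1 = 0$ and $C_G(X)^\circ$ is unipotent. Its image $\bar C$ in $L$ under $\pi \colon P \to L$ is then a connected unipotent subgroup of $C_L(X_0)$. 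If $\bar C$ contained a non-trivial unipotent element $u$, then $X_0 \le C_L(u)^\circ$; but $u$ lies in the centre of $C_L(u)^\circ$, and the centre of a connected reductive group is contained in every maximal torus and hence consists of semisimple elements. This forces $C_L(u)^\circ$ to be non-reductive, so by Borel--Tits it lies in a proper parabolic of $L$, contradicting the $L$-irreducibility of $X_0$. Hence $\bar C = 1$ and $C_G(X)^\circ \le Q = \ker \pi$.

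The main technical step I expect is the identification $(P \cap P')R_u(P) = P_\nu$ in (i); this is a concrete root-set calculation, verifying that a root $\alpha$ lies in both sides precisely when $\langle \alpha, \lambda \rangle > 0$ or when $\alpha$ is a root of $L$ lying in $\Phi(P')$. Once that is in hand, the rest of (i) becomes a case split on whether $L$ can fit inside $P'$, and (ii) is a clean consequence of Lemma~\ref{lem:rankofcents} together with the Borel--Tits characterisation of centralisers of non-trivial unipotent elements.
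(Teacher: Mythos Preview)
Your argument is correct and takes a genuinely different route from the paper in both parts.

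For (i), the paper invokes the ``associated parabolics'' machinery of Lemma~\ref{lem:associated} to see that any second parabolic $P_1 \ni X$ has Levi $G$-conjugate to $L$, then reduces to standard parabolics and analyses a distinguished Weyl-group double-coset representative $n$ sending one to the other; the dichotomy $P_1 = P$ versus $P \cap P_1 = L$ arises from whether $n \cdot \alpha$ is positive or negative for the single simple root $\alpha$ omitted from $L$. Your cocharacter approach---identifying $(P \cap P')R_u(P)$ with $P_{N\lambda+\mu}$ and forcing $\mu \in X_*(Z(L)^\circ)$---is more self-contained: it avoids the combinatorics of coset representatives and the appeal to \cite[\S 8]{MR794307}, and it makes transparent why maximality of $P$ is exactly what is needed (it forces $Z(L)^\circ$ to be one-dimensional, leaving only $P$ and $P^{-}$ as candidates for $P'$). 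The paper's approach, on the other hand, sits more naturally alongside the rest of Section~\ref{sec:preliminaries}, where Weyl-group elements are used throughout to fuse subgroup classes.

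For (ii), the paper applies Lemma~\ref{lem:unip-cent} directly: once $C_G(X)^\circ$ is unipotent (by Lemma~\ref{lem:rankofcents}), Borel--Tits applied to $C_G(X)^\circ X$ produces a minimal parabolic $P_0 \ni X$ with $C_G(X)^\circ \le R_u(P_0)$, and part (i) gives $P_0 = P$. Your approach---projecting $C_G(X)^\circ$ to $L$ and invoking $L$-irreducibility of $X_0$---is also valid, and does not need the packaging of Lemma~\ref{lem:unip-cent}. One step to tighten: the claim that $u \in Z(C_L(u)^\circ)$ presupposes $u \in C_L(u)^\circ$, which requires a word of justification. You can bypass this entirely by applying Borel--Tits not to $C_L(u)^\circ$ but to the closed subgroup $\bar C X_0 \le L$: since $X_0$ centralises the connected unipotent group $\bar C$, the latter is normal in $\bar C X_0$, so if $\bar C \neq 1$ then $R_u(\bar C X_0) \neq 1$ and $\bar C X_0$ lies in a proper parabolic of $L$, contradicting $L$-irreducibility of $X_0$.
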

\proof \ref{maxparab-i} By minimality, the image $\pi(X)$ is $L$-irreducible. Now let $P_1$ be any proper parabolic subgroup of $G$ containing $X$; we will show that $P_1 = P$. Standard results on intersections of parabolic subgroups show that the Levi factor of $P_1$ is $G$-conjugate to $L$ \cite[Lemma 3.26]{Litterick2018}. We can then assume that $P$ and $L$ are standard, and by \cite[\S 8]{MR794307} we can assume that $P_1 = n \cdot P_2$ for a standard parabolic subgroup $P_2$, where $n \in N_G(T)$ comes from a certain set of \emph{distinguished double coset representatives} of the Weyl group. Now conjugation by $n$ sends the standard Levi factor $L_2$ of $P_2$ to $L$. By maximality, $L$ and $L_2$ correspond to removing a single node from the Dynkin diagram of $G$, hence $n$ sends all the simple root subgroups in $L_2$ to positive roots; call the remaining simple root $\alpha$. If $n \cdot \alpha$ is also positive then $n$ preserves the set of positive roots of $G$, hence induces the identity element of $N_G(T)/T$, so $P_1 = P_2$. Then $P$ and $P_1$ are both standard and both minimal with respect to containing $X$, hence $P = P_1$ as claimed. If instead $n \cdot \alpha$ is negative then $n$ sends all roots of $G$ with positive $\alpha$-coefficient to negative roots. These are precisely the roots occurring in the unipotent radical of $P_2$, whereas the roots occurring in the unipotent radical of $P$ are all positive; hence $P \cap P_1 = P \cap (n \cdot P_2) = L$. However, this intersection contains $X$, and by assumption $X$ does not lie in $L$; this contradiction shows that this latter case does not occur, and thus $P_1 = P$.

\ref{maxparab-ii} By part \ref{maxparab-i} any element of $G$ normalising $X$ also normalises $P$, hence $N_G(X) \le P$. Now by Lemma~\ref{lem:rankofcents}, $C_{G}(X)^{\circ}$ has rank $0$ hence is unipotent, hence Lemma~\ref{lem:unip-cent} gives us a (proper) parabolic subgroup $P_0$ containing $X$ and having $C_{G}(X)^{\circ}$ as the fixed-point subgroup of its unipotent radical under $X$; by part \ref{maxparab-i} we have $P_0 = P$. \qed


\section{Type \texorpdfstring{$A_3$}{A3}} \label{sec:A3}

\subsection{\texorpdfstring{$G$}{G} of type \texorpdfstring{$E_7$}{E7}}

\subsubsection{\texorpdfstring{$L'$}{L'} of type \texorpdfstring{$A_5$}{A5}} \label{sec:a3ine7a5}

There are three standard $A_{5}$-parabolic subgroups of $G$, namely $P_{13456}$, $P_{34567}$ and $P_{24567}$. The first two are associated, with Levi subgroup labelled $A_5'$ according to the convention discussed in Section~\ref{sec:roots-levis}, while $P_{24567}$ forms its own association class, and has Levi subgroup labelled simply $A_5$. By Lemma~\ref{lem:associated}, then, non-$G$-cr subgroups with irreducible image in the standard Levi subgroups $L_{13456}$ and $L_{34567}$ are not $G$-conjugate to those with irreducible image in $L_{24567}$.

First let $P = P_{13456}$ or $P_{34567}$, with Levi decomposition $P = QL$. Let $X_0$ be a simple subgroup of type $A_3$ contained in the derived subgroup $L'$, with $X_0 \le L'$ via $010$ in the notation of Section~\ref{sec:embed}. As described in Section~\ref{sec:shape-modules}, the levels of the unipotent radical $Q$ are modules for $L'$, whose high weights can be determined combinatorially. The $L'$-module structure of these levels, as well as their restrictions to $X_0$, are as follows.
\begin{center}
\begin{tabular}{*{5}{c}}
 & $Q/Q(2)$ & $Q(2)/Q(3)$ & $Q(3)/Q(4)$ & $Q(4)$ \\ \hline
$L_{13456}'$ & $\lambda_3 + \lambda_5$ & $\lambda_2 + 0$ & $\lambda_5$ \\
$X_0 \le L_{13456}$ & $(010|(200 + 002)|010) + 010$ & $101 + 0^2$ & $010$ \\ \hline
$L_{34567}'$ & $\lambda_1 + \lambda_2$ & $\lambda_3$ & $\lambda_5$ & $0$ \\
$X_0 \le L_{34567}$ & $010 + 101 + 0$ & $010|(200 + 002)|010$ & $010$ & $0$ \\
\end{tabular}
\end{center}
Here we have used $\lambda_2 \downarrow X_0= \bigwedge^{2}(010) = 101 + 0$, and $\lambda_3 \downarrow X_0 = \bigwedge^{3}(010) = 010|(200 + 002)|010$. By Proposition \ref{prop:h1}, $H^{1}(X_0,101) \cong K$ and all other summands in a level of $Q$ have zero first cohomology group for $X_0$. Hence $\mathbb{V} \cong K$ (for each of the two choices of $P$).

The non-trivial torus $Z(L)$ centralises $X_0$ and acts on $Q$ without fixed points, hence induces a non-trivial action on $\mathbb{V} = K$. As discussed in Section~\ref{sec:rho}, it follows that there is at most one class of non-$G$-cr subgroups occurring in each parabolic. In $Q_{13456}$ and $Q_{34567}$ the modules of high weight $\lambda_2$ are respectively generated as an $X_0$-module by the images of $U_{0101111}$ in level $2$ and $U_{\alpha_{2}}$ in level $1$. Conjugation by the element $n_{1011111}n_{1010000} \in N_{G}(T)$ maps $L_{13456}$ to $L_{34567}$ and sends $U_{0101111}$ to $U_{\alpha_{2}}$. We deduce that a non-$G$-cr subgroup in one of these parabolic subgroups is conjugate to a subgroup of the other. In particular, any non-$G$-cr subgroup arising here has a conjugate contained in $P_{13456}$. In this case, $Q(3)$ contains $U_{0101111}$ and so any non-$G$-cr subgroup arising here has a conjugate contained in $Q(3)X_0$. Note also that $[Q(3),Q(3)] = \{0\}$ by the Chevalley commutator formula, hence $Q(3)$ is abelian. Since $Q(3)/Q(4)$ has two trivial composition factors and since $H^{1}(X_0,010) = \{0\}$, it follows that each complement to $Q$ in $QX_0$ centralises a $2$-dimensional unipotent subgroup, namely $Q(3)^{X_0}$.

We now exhibit an appropriate non-$G$-cr subgroup. By Remark \ref{rem:a3d6-d4d8}, there are two $D_6$-classes of non-$D_6$-cr subgroups $A_{3} \le D_6$ via $010 + 010$; fix representatives $Y$ and $Z$ of these. It follows from Lemma~\ref{lem:BMR} that $Y$ and $Z$ are non-$G$-cr, since $D_6$ is a Levi subgroup of $G$. Their actions on the $56$-dimensional irreducible $G$-module are given in Table~\ref{tab:E7}; pick $Y$ and $Z$ such that $V_{56} \downarrow Y = 010^4 + T(200) + T(002)$ and $V_{56} \downarrow Z = 010^4 + T(101)^2$. Then $Y$ and $Z$ are not conjugate in $G$ since they are not $\GL(V_{56})$-conjugate, and $Z$ is not contained in a conjugate of $P_{13456}$ as its composition factors on $V_{56}$ are distinct from those of $X_0$. By Proposition~\ref{prop:h1}, each non-$G$-cr subgroup of type $A_3$ is contained in an $A_5$-parabolic subgroup. Thus $Z$ lies in a conjugate of $P_{24567}$ which we consider in a moment; since $Y$ is not $G$-conjugate to $Z$, it follows that $Y$ lies in a conjugate of $P_{13456}$.

We now calculate $C_{G}(Y)^{\circ}$. To begin, Lemma \ref{lem:rankofcents} shows that $C_G(Y)$ has rank at most $1$. Since $Y \le D_6$ we have $C_{G}(Y) \ge C_G(D_6) = \bar{A}_1$ \cite[p.333 Table~2]{MR1274094}, thus $C_G(Y)$ has rank exactly $1$. Now let $U$ be a maximal connected unipotent subgroup of $C_{G}(Y)$. By Lemma~\ref{lem:unip-cent} at least one of $P_{13456}$ and $P_{34567}$ contains a conjugate of $U$ in its unipotent radical. In the table above, each $Q$ has at most $2$ trivial composition factors, hence also by Lemma~\ref{lem:unip-cent} we have $\dim U \le 2$. On the other hand, above we have exhibited a $2$-dimensional unipotent subgroup of $Q = Q_{13456}$ centralised by all complements to $Q$ in $QX_{0}$. Thus $\dim U = 2$. We have shown that $C_{G}(Y)^{\circ}$ is a connected algebraic group of rank $1$ with a maximal connected unipotent subgroup of dimension $2$. It follows that $C_{G}(Y)^{\circ} = U_{1}\bar{A}_1$ for some $1$-dimensional connected unipotent group $U_1$.

We now consider the second association class of $A_5$-parabolic subgroups. Let $P = P_{24567} = QL$ and again let $X_0$ be a simple subgroup of type $A_3$, with $X_0 \le L'$ via $010$. The corresponding actions of $L'$ and $X_0$ on $Q$ are as follows
\begin{center}
\begin{tabular}{*{6}{c}}
 & $Q/Q(2)$ & $Q(2)/Q(3)$ & $Q(3)/Q(4)$ & $Q(4)/Q(5)$ & $Q(5)$ \\ \hline
$L'$ & $\lambda_2 + 0$ & $\lambda_2$ & $\lambda_4$ & $0$ & $0$ \\
$X_0$ & $101 + 0^2$ & $101 + 0$ & $101 + 0$ & $0$ & $0$ \\ 
\end{tabular}
\end{center}
Thus $\mathbb{V} \cong K^{3}$. The $X_0$-modules $101$ in levels $1$, $2$ and $3$ of $Q$ are respectively generated by the root groups $U_{\alpha_3}$, $U_{1010000}$ and $U_{1122100}$. By \cite[p.333, Table~3]{MR1274094}, $C_G(X_0)^\circ = G_2$ and thus $Q^{X_0}$ is $6$-dimensional. The trivial $L$-module in $Q/Q(2)$ is generated by $z_1(c) = x_{\alpha_1}(c)$, hence gives a subgroup of $Q^{X_0}$ inducing an $X_0$-module isomorphism from the summand $\lambda_2$ of $Q/Q(2)$ to the summand $\lambda_2$ of $Q(2)/Q(3)$. Furthermore the trivial $X_0$-submodule of $Q(2)/Q(3)$ is generated by $z_2(c) = x_{1112100}(c)x_{1111110}(c)x_{1011111}(c)$, inducing a non-zero homomorphism $Q/Q(2) \to Q(3)/Q(4)$, and this is non-zero on the summand $101$ since this trivial module lies outside of $Z(Q/Q(4)) = Q(3)/Q(4)$. Thus, as described in Section \ref{sec:rho}, when parametrising complements to $Q$ in $QX_0$ by elements $(k_1,k_2,k_3) \in \mathbb{V}$, we may assume that at most one of $k_{1}$, $k_{2}$ and $k_{3}$ is non-zero. Moreover, the element $n_{\alpha_{1}} \in N_{G}(T)/T$ centralises $L'$ and exchanges the roots of levels $1$ and $2$ whose roots give rise to the $L'$-modules of high weight $\lambda_2$. Finally, the following element of the Weyl group normalises $L'$ and swaps the roots $1010000$ and $1122100$:
\[ n_3 n_4 n_2 n_5 n_4 n_3 n_6 n_5 n_4 n_2 n_7 n_6 n_5 n_4 n_3. \]
By the discussion in Section \ref{sec:qgconj}, these elements fuse together classes of complements corresponding to $(1,0,0)$, $(0,1,0)$ and $(0,0,1)$ of $\mathbb{V}$. Thus there is at most one non-$G$-cr subgroup in $P_{24567}$, up to $G$-conjugacy, corresponding to $(0,0,1) \in \mathbb{V}$. Above, we found a non-$G$-cr subgroup $Z$ lying in this class of parabolic subgroups with irreducible image in $L$.

It remains to calculate $C_{G}(Z)^{\circ}$. To start, $Z$ is conjugate to a subgroup corresponding to $(0,0,1) \in \mathbb{V}$, and we may therefore assume that $Z$ is contained in $Q(3) X_0$, where $Q(3)$ is abelian. We claim that any complement to $Q(3)$ in $Q(3) X_0$ commutes with $Q^{X_0}$. To check this, we start by noting that $x_{1122100}(t)$ commutes with all elements of $Q^{X_0} \cap Q(3)$ since $Q(3)$ is abelian, so any complement to $Q(3)$ in $Q(3)X_0$ commutes with $Q^{X_0} \cap Q(3)$. Now $Q^{X_0}$ is generated by $Q(3) \cap Q^{X_0}$ together with $z_1(c), z_2(c)$ given above and $z_3(c) = x_{0112100}(c)x_{0111110}(c)x_{0011111}(c)$. It is then a routine check using the Chevalley commutator formula that $x_{1122100}(t)$ also commutes with these three generators. Alternatively, one need not resort to explicit calculations since elements of $Q^{X_0}$ induce $X_0$-module homomorphisms from the $X_0$-module generated by $x_{1122100}(t)$, which is of high weight $101$ and in level $3$, into levels $4$ and above. But these homomorphisms are necessarily trivial because there are no vectors of non-zero weight in levels $4$ or $5$. This implies that elements of $Q^{X_0}$ commute with $x_{1122100}(t)$. 

At this point, we know that $C_G(Z)^{\circ}$ has rank $1$ and contains a subgroup $\bar{A}_1$. By Lemma~\ref{lem:unip-cent} we also know that $C_G(Z)$ has a maximal connected unipotent subgroup of dimension at most $6$, and we have exhibited such a subgroup above, namely $Q^{X_0}$. It follows that $C_{G}(Z)^{\circ} = U_5 \bar{A}_1$, where $U_5$ is a $5$-dimensional connected unipotent subgroup.

\subsection{\texorpdfstring{$G$}{G} of type \texorpdfstring{$E_8$}{E8}}

\subsubsection{\texorpdfstring{$L'$}{L'} of type \texorpdfstring{$A_5$}{A5}}

The four standard $A_5$-parabolic subgroups of $G$ are $P_{13456}$, $P_{34567}$, $P_{45678}$ and $P_{24567}$. We start by considering $P = P_{13456} = QL$, with $X_0 = A_{3} \le L'$ via $010$. The action of $X_0$ on the levels of $Q$ is as follows: 

\begin{center}
\begin{tabular}{*{4}{c}}
 $Q/Q(2)$ & $Q(2)/Q(3)$ & $Q(3)/Q(4)$ & $Q(4)/Q(5)$   \\ \hline
$(010|(200+002)|010)+ 010 +0$ & $101 + 010 + 0^2$ & $101 + 010 + 0$ & $010^2$   
\end{tabular}
\end{center}

\begin{center}
\begin{tabular}{*{4}{c}}
$Q(5)/Q(6)$ & $Q(6)/Q(7)$ & $Q(7)/Q(8)$ & $Q(8)$ \\ \hline
$101$   & $010$  &  $0$ & $0$ 
\end{tabular}
\end{center}
The summands of high weight $101$ in levels $2,3,5$ are respectively generated by the images of the root groups $U_{01011110}$, $U_{01011111}$, and $U_{12232221}$. Furthermore, the elements $z_1(c) = x_{\alpha_8}(c)$, $z_2(c) = x_{01122210}(c)x_{11221110}(c)x_{11122110}(c)$ and $z_3(c) = x_{01122211}(c)x_{11221111}(c)x_{11122111}(c)$ commute with $X_0$. Then a standard Chevalley commutator formula calculation shows that for each $c \neq 0$, $z_1(c)$ induces an $X_0$-module isomorphism between the summands $101$ in levels $2$ and $3$. Similarly, non-trivial elements $z_2(c)$ and $z_3(c)$ induce isomorphisms between the summands $101$ in levels $3$ and $5$, and in levels $2$ and $5$, respectively. Parametrising complements to $Q$ in $QX_0$ by $(k_1,k_2,k_3) \in \mathbb{V}$, this allows us to assume that $k_1 k_2 = k_1 k_3 = k_2 k_3 = 0$. Moreover, $n_{\alpha_8}$ preserves the roots in $L_{13456}$ and swaps the roots $01011110$ and $01011111$, the element $n_{12232111} n_{01122211} n_{10111100} n_{2} n_{4} n_{3} n_{5} n_{4} n_{2} n_{8} n_{7}$ preserves the roots in $L_{13456}$ and sends $01011111 \mapsto 12232221 \mapsto 01011110$. Thus in this case, there is precisely one non-$G$-cr subgroup of type $A_{3}$ having irreducible image in the Levi factor.

If $P = QL$ is one of the three remaining standard $A_5$-parabolics then again, precisely three direct summands of some level of the unipotent radical are irreducible of high weight $101$ for a subgroup $A_{3} \le L$ via $010$. These are the only summands occurring with non-zero first cohomology group, and are respectively generated by the images of root subgroups corresponding to the following roots:
\begin{center}
\begin{tabular}{c|c}
Parabolic & Roots \\ \hline
$P_{34567}$ & $\alpha_2$, $11111111$, $12232111$ \\ 
$P_{45678}$ & $01110000$, $11110000$, $12232100$ \\ 
$P_{24567}$ & $\alpha_3$, $10100000$, $11221000$ 
\end{tabular}
\end{center}

Essentially identical calculations in these three parabolics show that, up to conjugacy, there is at most one non-$G$-cr subgroup of type $A_3$ in each, having irreducible image in the Levi factor. Finally, the following elements of the Weyl group of $G$ send the roots of $L_{13456}$ to those of the Levi factor in each other case, and send the root $01011110$ to the given root $\beta$ occurring in the corresponding unipotent radical:
\begin{center}
\begin{tabular}{c|c|c}
Parabolic & Root $\beta$ & Weyl group element \\ \hline
$P_{34567}$ & $\alpha_2$ & $n_{7} n_{6} n_{5} n_{4} n_{3} n_{1}$ \\
$P_{45678}$ & $01110000$ & $n_{7} n_{6} n_{5} n_{4} n_{3} n_{1} n_{8} n_{7} n_{6} n_{5} n_{4} n_{3}$ \\
$P_{24567}$ & $10100000$ & $n_{112} n_{101} n_{7} n_{6} n_{5} n_{4} n_{3}$
\end{tabular}
\end{center}
We conclude that up to $G$-conjugacy there is exactly one non-$G$-cr subgroup of type $A_3$ with irreducible image in a Levi subgroup of type $A_5$.

Let $Y = A_3 \le E_7$ represent one of the two non-$E_7$-cr subgroup classes (these are fused in $G$ since they lie in a subgroup $D_6$, on which $G$ induces a graph automorphism). Then $Y$ is non-$G$-cr by Lemma \ref{lem:BMR} and contained in an $A_5$-parabolic subgroup of $G$. 

It remains to calculate $C_{G}(Y)^\circ$. Since $C_G(E_7)^\circ = \bar{A}_1$ and $C_{E_7}(Y) = U_5 \bar{A}_1$, it follows that $U_5 \bar{A}_1^2 = C_{\bar{A}_1 E_7}(Y) \le C_G(Y)$. Checking the restriction of $L(G)$ to $Y$ in Table \ref{tab:E8p2}, we see that $\dim C_G(Y) \le 12$. We claim that $Y$ is not separable in $G$, from which it follows that $U_5 \bar{A}_1^2 = C_G(Y)$. To prove that $Y$ is not separable in $G$ we start by considering the restriction $L(G) \downarrow \bar{A}_1 E_7 = L(\bar{A}_1 E_7) + (1,V_{56})$. From this, and the restriction of $L(G) \downarrow Y$, we deduce that $\dim C_{L(\bar{A}_1E_7)}(Y) = 12$. But $C_{\bar{A}_1E_7}(Y)$ was already shown to be $11$-dimensional in the above calculation. Therefore, $Y$ is not a separable subgroup of $\bar{A}_1 E_7$. Now, $(\bar{A}_1 E_7,G)$ is a reductive pair and so by \cite[Theorem~1.4]{MR2608407}, $Y$ is not a separable subgroup of $G$, as required.

\subsubsection{\texorpdfstring{$L'$}{L'} of type \texorpdfstring{$A_3^2$}{A3A3}} \label{sec:a3a3}
The two standard $A_3^2$-parabolic subgroups $P = QL$ of $G$ are $P_{134678}$ and $P_{234678}$. The filtration of the unipotent radical of $P_{134678}$ has a module $(010,100)$, generated as an $X_0$-module by the image of the root group $U_{01011000}$, and a module $(100,010)$ generated by the image of the root group $U_{01122100}$. Thus if $A_3 \le A_3^2$ via $(100^{[r]},100^{[s]})$, we find that $\mathbb{V}$ is $1$-dimensional if $(r,s) = (1,0)$ or $(0,1)$, and $\mathbb{V} = \{0\}$ otherwise. We will show below that non-trivial cocycles $X_0 \to Q(2)/Q(3)$ and $X_0 \to Q(3)/(4)$ fail to lift to cocycles $X_0 \to Q$, so that $H^{1}(X_0,Q) = \{0\}$ and $P_{134678}$ contains no non-$G$-cr subgroups of type $A_{3}$. (This occurs because of the presence of a module in the filtration with non-zero second cohomology group.)

In $P_{234678}$, the unipotent radical again gives rise to $A_3^2$-modules $(010,100)$ and $(100,010)$, respectively generated by the images of the root subgroups $U_{\alpha_5}$ and $U_{11122100}$. The Weyl group element $n_{\alpha_1} n_{\alpha_3} n_{\alpha_4} n_{\alpha_2}$ maps the standard Levi subgroup $L_{134678}$ to $L_{234678}$, and respectively sends the root groups $U_{01011000}$ and $U_{11122100}$ to $U_{\alpha_5}$ and $U_{11122100}$. Thus any non-$G$-cr subgroup $A_{3}$ of $P_{234678}$ is conjugate to a subgroup contained in $P_{134678}$.

We now show that $H^{1}(X_0,Q) = \{0\}$ when $Q = R_{u}(P_{134678})$. For this we use explicit calculations with root elements in $Q$; we give details for the case $X_0 \le L_{134678}'$ via $(100,100^{[1]})$, so that a module with non-zero first cohomology group appears only in $Q(2)/Q(3)$. We omit essentially identical calculations for other embeddings of $X_0$, where the Frobenius twist appears on the other factor, or where the dual module $001$ is used in place of $100$ in either factor.

Fix a maximal torus $T_{X_0}$ of $X_0$, contained in the fixed maximal torus of $G$, and let $\beta_1$, $\beta_2$ and $\beta_3$ be a set of simple roots of $X_0$ with respect to $T_{X_0}$. If $V = V_{X_0}(210)$ is the irreducible $X_0$-module, then a non-trivial cocycle $X_0 \to V$ is determined by its restrictions to the three subgroups $U_{\beta_1}$, $U_{\beta_2}$, $U_{\beta_3}$, since these subgroups together generate the unipotent radical $U_{X_0}$ of a Borel subgroup $B_{X_0}$ of $X_0$, and we have isomorphisms $H^{1}(X_0,V) \cong H^{1}(B_{X_0},V) \cong H^{1}(U_{X_0},V)^{T_{X_0}}$ by \cite[4.7(c), 6.9(3)]{MR2015057}. Moreover, consider an indecomposable extension $W = 0|V$, generated by a $T_{X_0}$-stable vector $v$ of weight zero. Then the cocycles $X_0 \to V$, $x \mapsto \lambda(x \cdot v - v)$ give a complete set of representatives of the cohomology classes in $H^{1}(X_0,V)$. Thus each cocycle is cohomologous to some $\phi$ such that $\phi(x_{\beta_i}(1))$ is a sum of vectors of weight $c_{i} \beta_{i}$, $c_{i} \in \mathbb{N}$. Expressing the roots as elements of the weight lattice of $X_0$, an elementary calculation shows that each weight $\beta_1 = (2,-1,0)$, $\beta_2 = (-1,2,-1)$, $\beta_3 = (0,-1,2)$ occurs in $V(210)$ with multiplicity one, and that $c_{i}\beta_{i}$ does not occur if $c_{i} > 1$. In particular, there are weight vectors $v_{(2,-1,0)}$, $v_{(-1,2,-1)}$, $v_{(0,-1,2)}$ such that $\phi$ is fully specified by $\phi(x_{\beta_1}(a)) = a v_{(2,-1,0)}$, $\phi(x_{\beta_2}(b)) = b v_{(-1,2,-1)}$, $\phi(x_{\beta_3}(c)) = c v_{(0,-1,2)}$. Using the commutator relations for $x_{\beta_1}(a)$, $x_{\beta_2}(b)$ and $x_{\beta_3}(c)$, we calculate that
\[ \phi(x_{\beta_1}(a) x_{\beta_2}(b) x_{\beta_3}(c)) = a v_{(2,-1,0)} + b v_{(-1,2,-1)} + c v_{(0,-1,2)} + bc^{2} v_{(-1,0,3)} + ab^{2} v_{(0,3,-2)} + ab^{2}c^{2} v_{(0,1,2)} \]
for all $a,b,c \in K$, where each $v_{j}$ is a non-zero vector of $T_{X_0}$-weight $j$.

We now realise $T_{X_0}$ as a sub-torus of $T$. We realise $A_{3}$ as a subgroup of $A_3^2 = L_{134678}'$ via $(100,100^{[1]})$ by defining $x_{\beta_1}(c) = x_{1}(c) x_{6}(c^2)$, $x_{\beta_2}(c) = x_{3}(c) x_{7}(c^2)$, $x_{\beta_3}(c) = x_{4}(c) x_{8}(c^2)$ for all $c \in K$. Then $T_{X_0}$ is generated by elements $h_{\beta_{i}}(t) = n_{\beta_i}(t)n_{\beta_i}(-1)$, where we in turn define $n_{\beta_i}(t) = x_{\beta_i}(t) x_{-\beta_i}(-t^{-1})x_{\beta_i}(t)$, for $t \in K^{\ast}$.

Now, since $T_{X_0}$ is a sub-torus of $T$, the $T_{X_0}$-weight spaces of $Q(2)/Q(3)$ are root subgroups of $G$. Direct calculation allows us to identify the six vectors $v_{j}$ above as follows:
\begin{align*}
v_{(2,-1,0)} &= x_{01011111}(\lambda_1)Q(3),
&v_{(-1,2,-1)} &= x_{01121110}(\lambda_2)Q(3),
& v_{(0,-1,2)} &= x_{11221100}(\lambda_3)Q(3),\\
v_{(-1,0,3)} &= x_{01121111}(\lambda_4)Q(3),
&v_{(0,3,-2)} &= x_{11221110}(\lambda_5)Q(3),
&v_{(0,1,2)} &= x_{11221111}(\lambda_5)Q(3)
\end{align*}
for some non-zero scalars $\lambda_1$, $\ldots$, $\lambda_6$. Finally, note that for a cocycle $\phi \, : \, X_0 \to Q$ and an arbitrary unipotent element $u \in X_0$, the element $\phi(u)u$ has order at most four, since it is a unipotent element in a complement to $Q$ in $QX_0$. So we take the element $\phi(y(a,b,c))y(a,b,c)$ where $y(a,b,c) = x_{\beta_1}(a)x_{\beta_2}(b)x_{\beta_3}(c)$, and calculate:
\[ y(a,b,c)^4 = x_{11221111}(ab^{2}c\lambda_1 + abc^{2}\lambda_{2}) x_{12232221}(ab\lambda_{1}\lambda_{2}). \]
In particular, this is only zero for all $a$, $b$ and $c$ if $\lambda_1$ and $\lambda_2$ are zero, which is the case if and only if $\phi$ is a coboundary.

\subsubsection{\texorpdfstring{$L'$}{L'} of type \texorpdfstring{$D_7$}{D7}}

Let $P = P_{2345678} = QL$ be the unique standard $D_7$-parabolic subgroup of $G$. The unipotent radical has two levels, with $Q/Q(2) \cong \lambda_6$ and $Q(2) = Z(Q) \cong \lambda_1$ as $L'$-modules.

Let $X_0 = A_3 \le L'$ via $101$. Then weight-space calculations show that $V_{L'}(\lambda_6) \downarrow X_0 = V_{L'}(\lambda_7) \downarrow X_0 = V_{X_0}(111)$ (cf.\ proof of \cite[Proposition 2.12]{MR1329942}; this is also stated in \cite[p.\ 283, Case S7]{MR888704}), and these are the only modules of this high weight occurring in $L(G) \downarrow X_0$. In particular $\mathbb{V} = K$ and there is at most one non-$G$-cr subgroup of type $A_3$ minimally contained in $P$, up to $G$-conjugacy.

Now consider a subgroup $Y$ of type $A_3$, embedded in a subsystem subgroup $D_8$ via the 16-dimensional module $T(101) = 0|101|0$. Since $Y$ is reducible on the natural 16-dimensional module and $Y$ preserves a nondegenerate quadratic form on $101$ \cite[p.\ 283, Case S7]{MR888704}, it follows that $Y$ lies in a $D_7$-parabolic subgroup of $D_8$, hence in a conjugate of $P$, and we can assume that the image of $Y$ in $L'$ is $X_0$. Note that $Y$ is not $D_8$-conjugate to $X_0$ as they have incompatible actions on the natural $16$-dimensional module, thus $Y$ is non-$D_8$-cr.

In a $D_7$-parabolic of $D_8$ containing $Y$, the unipotent radical, call it $Q_{D_8}$, is abelian and isomorphic to $V_{D_7}(\lambda_1)$ as a module for $L'$. In $G$, we have $Q_{D_8} = Z(Q) = Q(2)$, and $Q_{D_8} \cong 101$ and $Q/Q_{D_8} \cong V_{Y}(111)$ as $X_0$-modules. Now the inclusion $Q_{D_8} \to Q$ induces a long exact sequence in cohomology, and since $Q/Q_{D_8}$ has no fixed points under the action of $X_0$, Corollary~\ref{cor:mcrgcr} tells us that $Y$ is non-$G$-cr.

Since $P$ is a maximal parabolic subgroup of $G$ and $Q^{X_0} = Q^Y = 1$, by Lemma~\ref{lem:maxparab} the connected centraliser of $Y$ is trivial.


\section{Type \texorpdfstring{$B_3$}{B3} and \texorpdfstring{$B_3^2$}{B3B3}} \label{sec:B3}

\subsection{\texorpdfstring{$G$}{G} of type \texorpdfstring{$F_4$}{F4}}

The classification of non-$G$-cr subgroups of type $B_3$ in $G$ is given in \cite[Lemma~4.4.3]{MR3075783}. There are two classes of subgroups, which are respectively $G$-conjugates of $Y_1 < D_4 < B_4$ via $T(100)$ and $Y_2 < \tilde{D}_4 < C_4$ via $T(100)$. Moreover, $Y_1$ is contained in a $B_3$-parabolic subgroup and $Y_2$ is contained in a $C_3$-parabolic subgroup.

It remains to calculate $C_G(Y_1)^\circ$ and $C_G(Y_2)^\circ$. It suffices to calculate just one of these, as the exceptional graph morphism swaps $Y_1$ and $Y_2$ and thus their connected centralisers are isomorphic as abstract groups. We will calculate $C_G(Y_2)^\circ$. From the action of $Y_2$ on $L(G)$, given in Table \ref{tab:F4}, we see that $\dim C_G(Y_2) \le 1$. On the other hand, $Y_2$ is contained in the parabolic subgroup $P_{234} = QL$, and $Q$ has two levels, the second being $Z(Q)$ which is $1$-dimensional and hence centralised by $L'$. Thus every complement to $Q$ in $QL'$ centralises this $1$-dimensional unipotent subgroup and so $C_G(Y_2)^\circ = U_1$. 

\subsection{\texorpdfstring{$G$}{G} of type \texorpdfstring{$E_6$}{E6}}

\subsubsection{\texorpdfstring{$L'$}{L'} of type \texorpdfstring{$A_5$}{A5}}
Let $P = P_{13456} = QL$ and $X_0 = C_3 \le L'$ via $100$. The unipotent radical $Q$ has two levels, with $Q(2) = Z(Q)$ a trivial $1$-dimensional $L'$-module, and $Q/Q(2)$ isomorphic to the irreducible $A_5$-module $\lambda_3$. By Lemma \ref{lem:c3-wedge-cube-h1}, $H^{1}(X_0,Q/Q(2))$ vanishes and $H^{1}(X_0,(Q/Q(2))^{[1]})$ is $1$-dimensional. Since $H^{1}(X_0,Q(2)) = \{0\}$ it follows from Lemma~\ref{lem:exactseq} that $H^{1}(X_0,Q)$ vanishes and $H^{1}(X_0,Q^{[1]})$ is at most $1$-dimensional. Any non-$G$-cr complements to $Q$ in $QX_0$ are of type $B_{3}$, by Lemma \ref{lem:bn-cn}. Taking into account the action of the torus $Z(L)$ on $H^{1}(X_0,Q^{[1]})$ we see that up to $G$-conjugacy there is at most one non-$G$-cr subgroup $B_3$ in $P$ with image $X_0$ under projection to $L$.

Let $Y$ be a non-$G$-cr subgroup of type $B_{3}$ contained in a $C_3$-parabolic subgroup of $F_4$, as exhibited in \cite[Theorem 1(A)]{MR3075783}. Note that $Y < \tilde{D}_4 < F_4$, and this is how it is listed in Table \ref{tab:E6}. An embedding $F_4 \to E_6$ maps a $C_3$-parabolic subgroup of $F_4$ into $P$. Let $R$ be the unipotent radical of the $C_3$-parabolic subgroup of $F_4$. Then $R$ has a filtration $R/R(2) \cong V_{C_3}(\lambda_3)$, $R(2) \cong V_{C_3}(0)$ as $C_3$-modules. Thus $R$ is an $X_0$-invariant normal subgroup of $Q$, and $(Q/R)^{[1]}$ is isomorphic to an $X_0$-module of shape $200|002$. In particular this quotient has no fixed points, and by Corollary~\ref{cor:mcrgcr} this non-$F_4$-cr subgroup $Y$ remains non-$G$-cr.

Since $P$ is a maximal parabolic subgroup, Lemma \ref{lem:maxparab} implies that $C_G(Y)^\circ = Q^Y$. Since $Q^{X_0} \le Z(Q)$ and is $1$-dimensional it follows that $C_G(Y)^\circ = U_1$. 

\subsection{\texorpdfstring{$G$}{G} of type \texorpdfstring{$E_7$}{E7}}

\subsubsection{\texorpdfstring{$L'$}{L'} of type \texorpdfstring{$A_5$}{A5}} \label{sec:a5ine7}

The three standard $A_{5}$-parabolic subgroups of $G$ are $P_{13456}$, $P_{34567}$ and $P_{24567}$. The unipotent radical of $P_{24567}$ involves only modules of high weight $0$, $\lambda_2$ and $\lambda_4$ for the Levi factor; in particular none of these modules have a non-zero first cohomology group for a subgroup $C_{3}$. As discussed in Section \ref{sec:A3}, the other two parabolic subgroups are associated, and the filtrations of their unipotent radicals each involve two modules of high weight $\lambda_1$ or $\lambda_5$, and one module of high weight $\lambda_3$, so $\mathbb{V} \cong K^{3}$ in each case.

To begin, let $P = P_{34567} = QL$, and let $X_0 = C_3 < L_{34567}'$ via $200$. Then $Q$ involves a single $L'$-module of each high weight $\lambda_1$, $\lambda_3$ and $\lambda_5$, respectively generated as an $L'$-module by the image of the root groups $U_{\alpha_1}$, $U_{1111000}$ and $U_{1223210}$. The unipotent radical $Q$ also involves an $L'$-module of high weight $\lambda_2$, generated by the image of the root group $U_{\alpha_2}$. As an $X_0$-module, this is isomorphic to $\bigwedge^{2}(200) = 020 + 0$, and this trivial module induces a non-zero map of $X_0$-modules $Q/Q(2) \to Q(2)/Q(3)$. As discussed in the proof of Lemma \ref{lem:c3-wedge-cube-h1}, this induces an isomorphism of cohomology groups $H^{1}(X_0,200) \to H^{1}(X_0, \bigwedge^{3}(200))$. Thus if we parametrise complements to $Q$ in $QX_0$ by $(k_1,k_2,k_3) \in \mathbb{V}$, then we may assume $k_1 k_2 = 0$.

If $k_1 = 0$ then a $Q$-conjugate of the corresponding complement to $Q$ in $QX_0$ is contained in the subgroup generated by $X_0$, $U_{1111000}$ and $U_{1223210}$. The Weyl group element $n_{7} n_{6} n_{5} n_{4} n_{3} n_{1}$ sends the roots in $L_{34567}$ to roots in $L_{13456}$, and sends the roots $1111000$ and $1223210$ to roots in $Q_{13456}$. If instead $k_2 = 0$ then the Weyl group element:
\[ n_{7} n_{6} n_{5} n_{4} n_{2} n_{3} n_{1} n_{4} n_{3} n_{5} n_{4} n_{2} n_{6} n_{5} n_{4} n_{3} n_{7} n_{6} n_{5} n_{4} n_{2} \]
sends the roots in $L_{34567}$ to roots in $L_{13456}$, and sends the roots $\alpha_1$ and $1111000$ to roots in $Q_{13456}$. Thus every non-$G$-cr complement to $Q$ in $QX_0$ is $G$-conjugate to a subgroup of $P_{13456}$.

Now let $P = P_{13456} = QL$. Then $Q$ has three levels and $X_0 = C_3 < L'$ acts on them as follows. 
\begin{center}
\begin{tabular}{*{3}{c}}
$Q/Q(2)$ & $Q(2)/Q(3)$ & $Q(3)$ \\ \hline
$(100|001|100) + 100$ & $010 + 0^2$ & 100
\end{tabular}
\end{center}

Thus $\mathbb{V} \cong K^3$, where the $X_0$-submodule of high weight $100$ in $100|001|100$ is generated by elements of the form $x_{0111000}(c)x_{0101100}(c)$ and the $X_0$-summands of high weight $100$ are generated by the image of the root groups $U_{\alpha_7}$ and $U_{1223211}$. 

The root group $U_{1223210}$ is a trivial $L'$-module, and induces a non-zero $L'$-module homomorphism between the two modules of high weight $\lambda_5$. Moreover, the corresponding Weyl group element $n_{1223210}$ swaps the roots $\alpha_7$ and $1223211$, and sends $0111000$ and $0101100$ to negative roots. Thus, parametrising complements to $Q$ in $QX_0$ by triples $(k_1,k_2,k_3) \in \mathbb{V}$, we may assume that $k_2 k_3 = 0$, and we may also swap $k_2$ and $k_3$, so long as $k_1 = 0$.

Taking into account the action of the $2$-dimensional torus $Z(L)$, non-$G$-cr complements to $Q$ in $QX_0$ correspond to one of the four triples $(1,0,0)$, $(0,0,1)$, $(1,1,0)$ and $(1,0,1)$. Thus there are at most four non-$G$-cr complements to $Q$ in $QX_0$, up to $G$-conjugacy.

We now present four non-$G$-cr subgroups of type $B_3$, and we see that they are all non-conjugate by considering their actions on $V_{56}$, given in Table \ref{tab:E7}. Let $Y_1 < E_6$ be the non-$E_6$-cr subgroup from the previous section and let $Y_2 < A_6$ be embedded via $W(100)$. Then $Y_1$ and $Y_2$ are non-$G$-cr by Lemma \ref{lem:BMR}. Subgroups $Y_3$ and $Y_4$ are embedded in the maximal rank subgroup $A_7$ via $T(100)$ and $001$, respectively. To prove these are non-$G$-cr we note that they both act on $V_{56}$ with two indecomposable summands of dimension $28$ and hence are not contained in any Levi subgroup of $G$. The subgroup $Y_3$ is contained in a parabolic subgroup of $A_7$ and hence of $G$. The subgroup $Y_4$ is contained in a parabolic subgroup of $G$ because $Y_4 < D_4 < C_4 < A_7$ and this $C_4$ subgroup is non-$G$-cr, as discussed in Section \ref{sec:C4}. Note that all $Y_i$ subgroups must be contained in a parabolic subgroup of type $A_5'$, since we have proved that a representative for every non-$G$-cr subgroup of type $B_3$ in $G$ is contained $P_{13456}$. 

It remains for us to calculate the connected centralisers of the four non-$G$-cr subgroups. Let us start by considering $Y_1$, which is contained in $F_4 < E_6$. We noted above that $C_{F_4}(Y_1)^\circ = U_1$. We also have that $C_G(F_4)^\circ = A_1$ (by \cite[p.333, Table 3]{MR1274094}) and so $U_1 A_1 \le C_G(Y_1)^\circ$. The socle series of the restriction of $L(G)$ to $Y_1$ given in Table \ref{tab:E7} shows that $\dim C_{L(G)}(Y_1) \le 4$. Thus $C_G(Y_1)^\circ = U_1 A_1$. 

Similarly, by considering the action of $Y_2$ on $L(G)$ we find that $\dim C_G(Y_2) \le 3$. The non-$G$-cr subgroups corresponding to $(1,0,0)$ and $(0,0,1)$ both centralise tori and so $Y_1$ and $Y_2$ must be representatives of their conjugacy classes. In fact, one can check that $Y_2$ is conjugate to the subgroup corresponding to $(0,0,1)$ by calculating that $C_G(S)^\circ$ is of type $A_6 T_1$ for the $1$-dimensional torus $S \le Z(L)$ centralising the root $1223211$. Considering the structure of the unipotent radical of $P_{13456}$ given above, we see that the non-$G$-cr subgroup corresponding to $(0,0,1)$, contained in $Q(2)X_0$, centralises a $2$-dimensional unipotent subgroup of $Q$ generated by the trivial $X_0$-modules in level $2$ generated by $x_{1122111}(c) x_{11122111}(c) x_{0112221}(c)$ and $x_{1223210}(c)$. Therefore, $C_G(Y_2)^\circ = U_2 T_1$. 

Finally, the subgroups $Y_3$ and $Y_4$ are not contained in any Levi subgroups, as shown above, and therefore their connected centralisers are unipotent and they are each conjugate to precisely one of the subgroups corresponding to $(1,1,0)$ and $(1,0,1)$ (though we do not yet know which is which). Note that $C_{L(G)}(Y_i)$ contains $Z(L(G))$, a $1$-dimensional subalgebra generated by a semisimple element, and hence $\dim C_G(Y_i) \le \dim C_{L(G)}(Y_i) -1$ for $i=3,4$. In particular, using the restrictions in Table \ref{tab:E7} we find that $\dim C_G(Y_3) \le 2$ and $\dim C_G(Y_4) \le 1$. Studying $P = P_{13456}$ as in the previous paragraph, we see that the subgroup corresponding to $(1,1,0)$ centralises a $1$-dimensional subgroup of $Q$, whereas the subgroup corresponding to $(1,0,1)$ centralises a $2$-dimensional unipotent subgroup of $Q$.  It now follows that $Y_3$ is conjugate to the subgroup corresponding to $(1,0,1)$ with $C_{G}(Y_3) = U_2$ and similarly, $Y_3$ is conjugate to the subgroup corresponding to $(1,1,0)$ with $C_{G}(Y_4) = U_1$. 

\subsection{\texorpdfstring{$G$}{G} of type \texorpdfstring{$E_8$}{E8}}

\subsubsection{\texorpdfstring{$L'$}{L'} of type \texorpdfstring{$A_5$}{A5}} \label{subsec:b3ine8a5}

The four standard $A_5$-parabolic subgroups of $G$ are $P_{13456}$, $P_{34567}$, $P_{45678}$ and $P_{24567}$. We argue in this section as follows.

For $P = QL$ equal to each of the four standard parabolic subgroups, with $X_0 = C_3 < L'$ via $100$, we find that $P$ contains only finitely many complements to $Q$ in $QX_0$, up to $P$-conjugacy. Moreover, each such subgroup is $G$-conjugate to a subgroup of $P_{34567}$. In fact, we can prove this latter claim without a complete analysis of the complements to $Q$ in $QX_0$. We then proceed to consider $P_{34567}$ in detail. 

First let $P = P_{13456}$, and let $X_0 = C_3 < L'$ via $100$. The $X_0$-modules with non-zero first cohomology group in the filtration of $Q$ are generated by the images of the root groups $U_{\alpha}$ for $\alpha \in \{ \alpha_7, \alpha_2, \alpha_7 + \alpha_8, 12232110, 01122221, 12232111, 13354321 \}$. Moreover there are four $1$-dimensional subgroups of $Q$ which centralise $X_0$ and induce non-zero homomorphisms between $X_0$-modules in levels of $Q$. We record these homomorphisms below. Writing $(\alpha,\beta)$ here means that a homomorphism is induced from the $X_0$-module generated by the image of $U_{\alpha}$, to the $X_0$-module generated by the image of $U_{\beta}$:
\begin{center}
\begin{tabular}{c|c}
Element & Induced homomorphism \\ \hline
$x_{8}(c)$ & $(\alpha_7,\alpha_7 + \alpha_8)$, $(12232110,12232111)$ \\
$x_{12232100}(c)$ & $(\alpha_7,12232110)$, $(\alpha_7 + \alpha_8,12232211)$, $(01122221,13354321)$ \\
$x_{01122210}(c) x_{11122110}(c) x_{11221110}(c)$ & $(\alpha_7 + \alpha_8,01122221)$, $(12232111,13354321)$ \\
$x_{01122211}(c) x_{11122111}(c) x_{11221111}(c)$ & $(\alpha_7,01122221)$, $(12232110,13354321)$
\end{tabular}
\end{center}

Working through all of the relations implied by these homomorphisms, as well as elements of $N_G(T)$ normalising $L$, it transpires that $P_{13456}$ contains at most eight non-$G$-cr complements to $Q$ in $QX_0$, up to $G$-conjugacy. However we do not need this; instead we show only that each such subgroup is conjugate to a subgroup of $P_{34567}$. For this, notice that the first homomorphism induced by $x_{8}(c)$ allows us to assume that $k_1 k_2 = 0$, if we take an appropriate basis of the $7$-dimensional space $\mathbb{V}$. Moreover, conjugation by the Weyl group element $n_{8}$ centralises $X_0$ and swaps $\alpha_7$ and $\alpha_7 + \alpha_8$, and also preserves the set of roots whose root groups in $Q$ give rise to $X_0$-modules with non-zero first cohomology group. It follows that we can assume that $k_1 = 0$. So every complement to $Q$ in $QX_0$ is $Q$-conjugate to a complement in $Q_{1}X_0$, where $Q_{1}$ is the smallest $X_0$-invariant subgroup of $Q$ containing $Q(2)$ and $U_{\alpha_7 + \alpha_8}$.

Now consider the element $n_{7} n_{6} n_{5} n_{4} n_{3} n_{1}$ of the Weyl group $W$. This sends the roots occurring in $L_{13456}$ to the roots occurring in $L_{34567}$, and sends all of the roots giving modules with non-zero first cohomology group, other than $\alpha_7$, to roots occurring in $Q_{34567}$. Thus this Weyl group element sends a $Q$-conjugate of each non-$G$-cr complement to $Q$ in $QX_0$ to a subgroup of $P_{34567}$, as claimed.

Entirely similar calculations occur in the parabolic subgroups $P_{45678}$ and $P_{24567}$. Up to conjugacy, we find that there are at most eight and three non-$G$-cr complements, respectively, to the unipotent radical in an appropriate semidirect product. For the case $P = P_{45678}$, these are all contained in the smallest $X_0$-invariant subgroup of $P_{45678}$ generated by $X_0$ and $U_{23454321}$, $U_{11222100}$, $U_{11222110}$, $U_{10100000}$, $U_{12343210}$, $U_{\alpha_2}$ and $U_{22343210}$. The Weyl group element $n_{3} n_{4} n_{5} n_{6} n_{7} n_{8}$ sends the roots in $L_{45678}$ to the roots in $L_{34567}$ and sends each of these other root groups to root groups in $Q_{34567}$. Thus each non-$G$-cr complement in $P_{45678}$ is $G$-conjugate to a subgroup of $P_{34567}$. Similarly each complement in $P_{24567}$ is $P_{24567}$-conjugate to a complement contained in the smallest $X_0$-invariant subgroup generated by $X_0$ and $U_{22454321}$, $U_{11232111}$, $U_{11222211}$, $U_{10111111}$ and $U_{12343211}$. The following Weyl group element:
\[ n_{0 0 1 1 1 1 1 1} n_{2} n_{4} n_{3} n_{5} n_{4} n_{2} n_{6} n_{5} n_{4} n_{3} n_{7} n_{6} n_{5} n_{4} n_{2} n_{8}\]
sends the roots in $L_{24567}$ to the roots occurring in $L_{34567}$, and sends each of the root elements above to roots arising in the unipotent radical of $P_{34567}$; again we conclude that each non-$G$-cr occurring in $P_{24567}$ is $G$-conjugate to a subgroup of $P_{34567}$.

So let $P = QL = P_{34567}$, and fix $X_0 = C_{3} < L'$. The seven relevant summands of $Q$ are generated by the images of the root groups $U_{\alpha_1}$, $U_{\alpha_8}$, $U_{01011111}$, $U_{11110000}$, $U_{12232100}$, $U_{22343211}$, $U_{23354321}$. All but the fourth of these give rise to an irreducible $A_5$-module with high weight $\lambda_1$ or $\lambda_5$, hence to a natural $6$-dimensional $X_0$-module $100$. The root group $U_{11110000}$ gives rise to an $A_5$-module $\lambda_3$, which restricts to $X_0$ as $100|001|100$. The submodule $100$ is generated by the images of elements of the form $x_{11121000}(c)x_{11111100}(c)$, and as discussed in Lemma \ref{lem:c3-wedge-cube-h1} the inclusion $100 \to 100|001|100$ induces an isomorphism in first cohomology (after applying a Frobenius twist).

Fix a basis of $\bigoplus_{i = 1}^{7} H^{1}(X_0,Q(i)/Q(i+1)) = \mathbb{V} \cong K^{7}$, whose $i$-th member spans the first cohomology group of the $i$-th summand above, so that complements to $Q$ in $QX_0$ are parametrised by $7$-tuples $(k_1,k_2,\ldots,k_7)$.

The following elements generate $Q^{X_0}$, and the first six of them induce non-zero homomorphisms between $X_0$-modules in the levels of $Q$. Again, $(\alpha,\beta)$ means that a non-zero homomorphism is induced from the $X_0$-module generated by the image of $U_{\alpha}$, to the $X_0$-module generated by the image of $U_{\beta}$:
{\small
\begin{center}
\begin{tabular}{c|c|c|c}
Name &Elements & Induced homomorphisms & Implied conditions\\ \hline
$z_1(c)$ & $x_{01011110}(c)x_{01111100}(c)x_{01121000}(c)$ & $(\alpha_1,11110000$ and $12232100)$, & $k_1 k_4 = 0$ or $k_1 k_5 = 0$;\\
& & $(\alpha_8,01011111)$, & if $k_1 = 0$ then $k_2 k_3 = 0$;\\
& & $(22343211,23354321)$ & if $k_1 = k_2 = 0$ then $k_6 k_7 = 0$ \\ \hline
$z_2(c)$ & $x_{10111111}(c)$ & $(12232100,22343211)$ & $k_5 k_6 = 0$ \\ \hline
$z_3(c)$ & $x_{11222211}(c)x_{11232111}(c)x_{11122221}(c)$ & $(12232100,23354321)$ & $k_5 k_7 = 0$ \\ \hline
$z_4(c)$ & $x_{22343210}(c)$ & $(\alpha_8,22343211)$, & $k_2 k_6 = 0$; \\
& & $(01011111,23354321)$ & if $k_2 = 0$ then $k_3 k_7 = 0$ \\ \hline
$z_5(c)$ & $x_{12343211}(c)x_{12243221}(c)x_{12233321}(c)$ & $(\alpha_1,22343211)$ & $k_1 k_6 = 0$ \\ \hline
$z_6(c)$ & $x_{13354321}(c)$ & $(\alpha_1,23354321)$ & $k_1 k_7 = 0$ \\ \hline
$z_7(c)$ & $x_{23465432}(c)$ & & 
\end{tabular}
\end{center}
}

Moreover, there are a number of elements of the Weyl group of $G$ which stabilise the set of simple roots in $L_{34567}$ and conjugate together various complements to $Q$ in $QX_0$. The element
\[ w = n_{2} n_{4} n_{3} n_{5} n_{4} n_{2} n_{6} n_{5} n_{4} n_{3} n_{7} n_{6} n_{5} n_{4} n_{2} \]
normalises $L'$ and swaps $U_{\alpha_1}$ with $U_{12232100}$; $U_{\alpha_8}$ with $U_{01011111}$; $U_{22343211}$ with $U_{23354321}$; and fixes $U_{11110000}$. This therefore swaps $k_1$ with $k_5$, and using the first induced homomorphism above, we can therefore take $k_1 = 0$. Hence using the same induced homomorphism, we can also assume $k_2 k_3 = 0$.

If $k_2 \neq 0$ then $k_3 = k_6 = 0$. From the above induced homomorphisms we also have $k_5 k_7 = 0$. If $k_5 = 0$ then applying the above Weyl group element lets us assume that $k_2 = 0$, which we deal with in a moment. So we can assume $k_5 \neq 0$, and we obtain the tuples $(0,1,0,1,1,0,0)$ and $(0,1,0,0,1,0,0)$, after considering the action of the three-dimensional torus $Z(L)$ on $H^{1}(X_0,Q^{[1]})$.

Now assume $k_2 = 0$. If $k_3 \neq 0$ then $k_7 = 0$ and $k_5 k_6 = 0$. Moreover the element $n_{10111111}$ swaps $k_5$ and $k_6$, while stabilising $k_3$, $k_4$ and $k_7$ (it also sends $\alpha_1$ and $\alpha_8$ to negative roots, but this does not matter as $k_1 = k_2 = 0$). We can thus assume $k_5 = 0$. If $k_4 = k_6 = 0$ then we can apply the Weyl group element:
\[ n_{22343211} n_{11122221} n_{00111110} n_{11110000} n_{11122100} n_{5} n_{4} n_{6} n_{5} n_{8} \]
which swaps $k_3$ and $k_6$, and therefore puts us in the $k_3 = 0$ case which we deal with in the next paragraph. Thus we assume that one of $k_4$ and $k_6$ is non-zero, and we obtain the tuples $(0,0,1,1,0,1,0)$, $(0,0,1,0,0,1,0)$ and $(0,0,1,1,0,0,0)$, after considering the action of $Z(L)$.

Now assume $k_1 = k_2 = k_3 = 0$. Then we have $k_6 k_7 = k_5 k_6 = k_5 k_7 = 0$, so at most one of $k_5$, $k_6$ and $k_7$ is non-zero. Again, the element $n_{10111111}$ swaps $k_5$ and $k_6$, so we can assume that $k_5 = 0$. Then the Weyl group element $w$ above lets us swap $k_6$ and $k_7$, so we can assume that $k_6 = 0$. We obtain the tuples $(0,0,0,1,0,0,1)$, $(0,0,0,1,0,0,0)$ and $(0,0,0,0,0,0,1)$, as well as $(0,0,0,0,0,0,0)$, corresponding to the $G$-cr class, after considering the action of $Z(L)$.

In summary, there are at most eight non-$G$-cr complements to $Q$ in $QX_0$ up to conjugacy in $G$.

We now give eight non-$G$-cr subgroups of type $B_3$. Their actions on $L(G)$, given in Table \ref{tab:E8p2}, show that they represent different $G$-classes. Moreover, their composition factors on $L(G)$ show that they are all contained in a parabolic subgroup of type $A_5$. We let $Y_1 < E_6$, $Y_2 < A_6$ via $W(100)$, and $Y_3, Y_4 < A_7' < E_7$ via $T(100)$ and $001$, respectively be the non-$E_7$-subgroups from Section \ref{sec:a5ine7}, which are non-$G$-cr by Lemma \ref{lem:BMR}. The subgroups $Y_5 < A_7$ via $T(100)$ and $Y_6 < D_7$ via $W(100) + W(100)^*$ are also non-$G$-cr by Lemma \ref{lem:BMR}. Next we let $Y_7 < D_8$ via $T(100)^2$, which is a non-$D_8$-cr subgroup. From Table \ref{tab:E8p2}, we see that $Y_7$ acts on $L(G)$ with five indecomposable summands, two of which have dimension $28$ and the other three have dimension $64$. It now follows that $Y_7$ is not contained in any Levi subgroup of $G$ and is hence non-$G$-cr. Finally, we let $Y_8$ be a non-$G$-cr subgroup of type $B_3$ from Lemma \ref{lem:B3inD4}, contained in a non-$G$-cr subgroup of type $D_4$. The action of $Y_8$, given in Table \ref{tab:E8p2}, shows that it is not conjugate to the previous seven $Y_i$ and not contained in any Levi subgroup of $G$ (it has five direct summands on $L(G)$ of dimensions $30,30,62,63,63$).  

It remains for us to calculate the connected centralisers of the subgroups $Y_i$. In this case, the following proposition gives us an explicit construction of the non-$G$-cr subgroups occurring here.

\begin{proposition} \label{prop:B3inE8gens}
Let $G$ be of type $E_8$, $p=2$ and $P = P_{34567} = QL$. Let $X_0$ be a subgroup of type $B_3$ contained in $P$, with irreducible image $X_0$ of type $C_3$ in $L$. Then there exists $\mathbf{v} = (a_1,\ldots,a_7) \in \mathbb{V} \cong K^7$ such that $X_0$ is conjugate to $X_{\mathbf{v}} = \langle y_{\pm i}(t) : t \in K,\, 1 \le i \le 3 \rangle$ for the following elements $y_{\pm i}(t)$:
\begin{align*}
y_{\pm1}(t) &= x_{\pm3}(t)x_{\pm7}(t), \\
y_{\pm2}(t) &= x_{\pm4}(t)x_{\pm6}(t), \\
y_3(t) &= x_{5}(t) x_{10111000}(a_1t) x_{00001111}(a_2t) x_{11222100}(a_3t) x_{11122110}(a_3t)  x_{01122111}(a_4t) \\ 
&\phantom{{}={}} x_{12233210}(a_5t) x_{22344321}(a_6t)  x_{23465321}(a_7t), \\   
y_{-3}(t) &= x_{-5}(t) x_{10110000}(a_1t) x_{00000111}(a_2t) x_{11221100}(a_3t)  x_{11121110}(a_3t)  x_{01121111}(a_4t) \\ 
&\phantom{{}={}} x_{12232210}(a_5t) x_{22343321}((a_6+a_1a_2a_5 + a_2a_3^2)t)  x_{23464321}((a_7+a_1a_4a_5+a_3^2a_4)t).
\end{align*}
\end{proposition}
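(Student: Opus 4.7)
The overall strategy is to realise each element of $H^{1}(X_{0}, Q^{[1]}) \cong K^{7}$ enumerated in Section~\ref{subsec:b3ine8a5} by an explicit cocycle, and then translate that cocycle into a generating set for the corresponding abstract $B_{3}$-complement to $Q$ in $QX_{0}$. By Lemma~\ref{lem:bn-cn}, every non-$G$-cr $B_{3}$-subgroup with irreducible image $X_{0}$ of type $C_{3}$ in $L$ arises this way, so such an explicit construction suffices.

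First, I would fix the unperturbed $C_{3}$-subgroup of $L' \cong A_{5}$ via $100$ using generators $y_{\pm 1}^{0}(t) = x_{\pm\alpha_{3}}(t) x_{\pm\alpha_{7}}(t)$, $y_{\pm 2}^{0}(t) = x_{\pm\alpha_{4}}(t) x_{\pm\alpha_{6}}(t)$ (corresponding to the short simple roots of $C_{3}$) and $y_{\pm 3}^{0}(t) = x_{\pm\alpha_{5}}(t)$ (corresponding to the long simple root). The exceptional isogeny $C_{3} \to B_{3}$ in characteristic $2$ swaps long and short roots, with kernel built from the short-root subgroups. Consequently, for a $B_{3}$-complement determined by a class in $H^{1}(X_{0}, Q^{[1]})$, the long simple root subgroups of the complement (those arising from the short simple roots of $C_{3}$) are unperturbed, while only the short simple root subgroup (arising from $\alpha_{5}$) absorbs cocycle data. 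This accounts for the fact that $y_{\pm 1}(t)$ and $y_{\pm 2}(t)$ in the statement coincide with $y_{\pm 1}^{0}(t)$ and $y_{\pm 2}^{0}(t)$.

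Next, I would identify a representative cocycle $\gamma_{\mathbf{v}}\colon X_{0} \to Q^{[1]}$ for each $\mathbf{v} \in K^{7}$. The seven $X_{0}$-summands of type $\lambda_{1}$ with non-zero cohomology, listed in Section~\ref{subsec:b3ine8a5}, are generated by the root subgroups $U_{\alpha_{1}}$, $U_{\alpha_{8}}$, $U_{01011111}$, $U_{11110000}$, $U_{12232100}$, $U_{22343211}$, $U_{23354321}$. A representative cocycle vanishing on the short-root subgroups of $C_{3}$ is determined up to coboundary by its value at $x_{\alpha_{5}}(t)$, and a natural choice sends this to the product $\prod_{i} x_{\beta_{i}+\alpha_{5}}(a_{i}t)$ appearing in the formula for $y_{3}(t)$; each correction root is obtained by adding $\alpha_{5}$ to a generator root of one of the seven summands. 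The two terms $x_{11222100}(a_{3}t)$ and $x_{11122110}(a_{3}t)$ share the single coefficient $a_{3}$ because together they span the unique trivial $X_{0}$-submodule in the level containing $U_{11110000}$, as visible from the $X_{0}$-structure $100|001|100$ of $V_{A_{5}}(\lambda_{3})$ discussed in Lemma~\ref{lem:c3-wedge-cube-h1}.

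Deriving $y_{-3}(t)$ is the key technical step and the main obstacle. Using the $\SL_{2}$-relations at the root $\alpha_{5}$ together with the cocycle condition, $\gamma_{\mathbf{v}}(x_{-\alpha_{5}}(t))$ is fully determined by $\gamma_{\mathbf{v}}(x_{\alpha_{5}}(t))$; however, several pairs of correction root subgroups in $Q$ do not commute, so reordering the resulting expression into the standard product form introduces quadratic corrections via the Chevalley commutator formula in $E_{8}$. The terms $a_{1}a_{2}a_{5} + a_{2}a_{3}^{2}$ and $a_{1}a_{4}a_{5} + a_{3}^{2}a_{4}$ are \emph{forced}, not free, and must be extracted from the commutators among the root subgroups $U_{10110000}$, $U_{00000111}$, $U_{11221100}$, $U_{11121110}$, $U_{01121111}$, $U_{12232210}$, $U_{22343321}$, $U_{23464321}$ occurring in the base expression for $y_{-3}(t)$. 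Once these corrections are identified, verification that $X_{\mathbf{v}} = \langle y_{\pm i}(t)\rangle$ satisfies the full system of $B_{3}$ Chevalley relations is a routine check, and conjugacy of an arbitrary such $X_{0}$ to some $X_{\mathbf{v}}$ then follows from the cohomology classification: each corresponds to a unique class in $H^{1}(X_{0}, Q^{[1]})$, hence to a unique parameter $\mathbf{v}$.
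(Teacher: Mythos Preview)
Your approach is essentially the paper's: fix the $C_3$ generators in $L'$, observe that only the long-root $C_3$-subgroup absorbs cocycle data (hence $y_{\pm1},y_{\pm2}$ are unperturbed), write down an explicit cocycle on $x_{\alpha_5}(t)$, derive $y_{-3}(t)$ from it, and verify the $B_3$ Chevalley relations. The paper's proof is in fact terser than yours---it simply records that the elements visibly lie in $P$, reduce to $X_0$ when $\mathbf{v}=0$, and satisfy the $B_3$ relations by a \textsc{Magma}-assisted check; the derivation you sketch is relegated to the remark following the proposition, which restricts to the $A_1$-subgroup $Y_0 = \langle x_{\pm\alpha_5}(t)\rangle$ exactly as you suggest.

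Two small inaccuracies in your explanatory detail are worth correcting. First, the correction roots appearing in $y_3(t)$ are \emph{not} obtained by adding $\alpha_5$ to the generating root of each summand: for instance $\alpha_1 + \alpha_5 = 10001000$, whereas the root appearing is $10111000 = \alpha_1+\alpha_3+\alpha_4+\alpha_5$. The correct description is that each correction root is the unique root in its $X_0$-summand having $Y_0$-weight $1$ (equivalently $\langle\gamma,\alpha_5^\vee\rangle=1$), which becomes weight $2$ after the Frobenius twist; this is what the paper's restriction $200 \downarrow Y_0 = 2 + 0^4$ encodes. Second, the two root elements sharing the coefficient $a_3$ do not span a \emph{trivial} $X_0$-submodule (there is none in $100|001|100$); rather, the $Y_0$-weight-$1$ space in $\lambda_3 \downarrow X_0$ is two-dimensional, and only this particular combination lies in the $100$-submodule through which the cohomology class factors (Lemma~\ref{lem:c3-wedge-cube-h1}).
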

\begin{proof}
By construction, the given elements lie in $P$, and when $a_1,\ldots,a_7$ are all zero these generate the $L$-irreducible subgroup of type $C_3$. It is also clear that the cocycles have image in the appropriate modules in the levels of $Q$. It remains, therefore, only to prove that each such subgroup is indeed a group of type $B_3$. This is now routine using \cite[Theorem 12.1.1]{MR0407163} (calculations assisted using \Magma).
\end{proof}

\begin{remark}
To obtain the form of the implicit cocycles above, consider the restriction map $H^{1}(X_0,200) \to H^{1}(Y_0,200 \downarrow Y_0)$, where $Y_0$ is a subgroup of type $A_1$ of $X_0$ generated by $x_{\pm 5}(t)$. Then $Y_0$ is the derived subgroup of a Levi subgroup of $X_0$. Now $200 \downarrow Y_0 = 2 + 0^{4}$. It follows that the map $H^{1}(X_0,200) \to H^{1}(Y_0, 200 \downarrow Y_0)$ is injective. Cocycles for subgroups of type $A_1$ have been described explicitly in \cite[Lemma 3.6.2]{MR3075783}. To lift this to a classification of cocycles $Y_0 \to Q$, one can use the relations defining groups of type $A_1$; see \cite[Lemma 3.6.1]{MR3075783}, and \cite[p.\ 45]{MR3075783} for an explicit example.
\end{remark}

Returning to the centraliser calculations, we first find the rank of $C_G(Y_i)$ by considering the smallest Levi subgroup in which $Y_i$ is contained. Indeed, within our list of non-$G$-cr subgroups we have all non-$L$-cr subgroups of type $B_3$ for every Levi subgroup $L$, recalling that the first four subgroups represent the four $E_7$-classes of non-$E_7$-cr subgroups. Thus the rank is $2$ for $Y_1$ and $Y_2$, since they are contained in $E_6$ and $A_6$, respectively. For $Y_3$, \ldots, $Y_6$, the rank is $1$ since they are contained in $E_7, E_7, A_7, D_7$, respectively. Finally, $Y_7$ and $Y_8$ are not contained in any Levi subgroup and hence the rank of its connected centraliser is $0$.

Given the construction of all subgroups of type $B_3$ contained in $P$ in Proposition \ref{prop:B3inE8gens} above, it is now routine to calculate $C_{Q^{X_0}}(X_{\mathbf{v}})$, since $Q^{X_0} = \langle z_i(c) : i =1, \ldots, 7\rangle$. In particular, notice that $a_1a_2a_5 + a_2a_3^2 = 0$ for $Z\mathbf{v} \in \mathbb{V}$ for each of our eight class representatives, and so the lift in the root group $U_{22343321}$ is trivial. 

We now show that $C_G(Y_1) = U_1 G_2$. We know that $C_{F_4}(Y_1) = U_1$ and $C_G(F_4) = G_2$. Since $\dim C_{L(G)}(Y_1) \le 15$, from Table \ref{tab:E8p2}, we are done. Next we consider $Y_2$, and start by noting that $C_G(A_6) = \bar{A}_1 T_1$. Furthermore, a routine calculation as above shows that the complements corresponding to $(0,0,0,1,0,0,0)$ and $(0,0,0,0,0,0,1)$ centralise $Q^{X_0}$. Therefore, $C_G(Y_2)^\circ$ contains a $7$-dimensional unipotent subgroup. Since $\dim C_{L(G)}(Y_2) = 10$ it follows that $C_G(Y_2) = U_6 A_1 T_1$. 

Next, we will find the connected centralisers of $Y_7 < D_8$ and $Y_8$. Recall that these centralisers are unipotent and so $Y_7$ and $Y_8$ must be conjugate to one of the subgroups corresponding to $(0,0,1,1,0,1,0)$ and $(0,1,0,1,1,0,0)$. We now check that the elements $z_2(c)$, $z_3(c)$, $z_5(c)$, $z_6(c)$, $z_7(c)$ centralise the complements corresponding to $(0,0,1,1,0,1,0)$ and $(0,1,0,1,1,0,0)$ and that $z_1(c) z_4(c)$ centralises the complement corresponding to $(0,0,1,1,0,1,0)$. We claim that $C_G(Y_7)^\circ = U_5$ and so $Y_7$ must be conjugate to the complement corresponding to $(0,1,0,1,1,0,0)$. To check the claim, we start by noting that $\dim{C_G(Y_7)} \le 6$, using the relevant restriction given in Table \ref{tab:E8p2}.  Since $Y_7 < D_8$, we have $\mathfrak{c} = C_{L(G)}(D_8) \subseteq C_{L(G)}(Y_7)$. Since the isogeny type of $D_8 < E_8$ is half-spin, it follows that $\mathfrak{c}$ is a $1$-dimensional subalgebra generated by a semisimple element. Since $C_G(Y_7)$ is unipotent, it follows that $\text{Lie}(C_G(Y_7))$ cannot contain $\mathfrak{c}$ and so $\text{Lie}(C_G(Y_7))$, hence $C_G(Y_7)$, has dimension at most $5$. It now follows that $Y_8$ is conjugate to the subgroup corresponding to $(0,0,1,1,0,1,0)$ and so $C_G(Y_8)^\circ$ has dimension at least $6$. Checking the restriction in Table \ref{tab:E8p2} shows that $\text{Lie}(C_G(Y_8))$ is $6$-dimensional and so $C_G(Y_8)^\circ = U_6$. 

The remaining subgroups $Y_3$, $Y_4$, $Y_5$, $Y_6$ are each conjugate to exactly one of the four complements corresponding to $(0,0,0,1,0,0,1)$, $(0,0,1,1,0,0,0)$, $(0,0,1,0,0,1,0)$ and $(0,1,0,0,1,0,0)$; recall that their centralisers all have rank $1$. A routine calculation shows that these complements centralise a $7$, $6$, $6$ and $4$-dimensional subgroup of $Q^{X_0}$, respectively. For the final two complements, only $5$ and $3$ generators of $Q^{X_0}$ given above, respectively, are in the centraliser. However, in both cases the product of two of the excluded generators makes up the final generator of the centraliser of the complements. 

Inspecting Table \ref{tab:E8p2}, we see that $\dim C_G(Y_i) = 10,9,8,6$ for $i=3,4,5,6$, respectively. Next, we show that $Y_3$ and $Y_4$ are not separable in $E_7T_1$, i.e.\ $\dim C_{E_7T_1}(Y_i) < \dim C_{L(E_7T_1)}(Y_i)$. Since $L(E_7T_1)$ is a direct summand of $L(E_8)$, it then follows from \cite[Theorem 1.5]{MR2608407} that $Y_3$ and $Y_4$ are not separable in $E_8$. From the calculations in $E_7$ we find $\dim C_{E_7T_1}(Y_3) = 3$ and $\dim C_{E_7T_1}(Y_4) = 2$. We can easily check that the corresponding centraliser dimension in $L(E_7T_1)$ is one more in each case, using the restrictions $L(E_8) \downarrow E_7T_1 = L(E_7T_1) + (V_{56},1) + (V_{56},-1) + (0,1) + (0,-1)$ and $V_{56} \downarrow Y_3, Y_4$ in Table \ref{tab:E7}. Thus $\dim C_G(Y_3) \le 9$ and $\dim C_G(Y_4) \le 8$. 

We claim that $C_{G}(Y_i)$ contains a subgroup $A_1$ in all four remaining cases. For $Y_3, Y_4$ this is clear, since they are contained in $E_7$, and $C_{G}(E_7)^{\circ} = \bar{A}_1$. The subgroup $Y_5$ acts as $T(100)$ on $V_{A_7}(\lambda_1)$, and since $T(100)$ is self-dual it follows that $Y_5 < C_4 < A_7$. By \cite[p.333 Table~3, Lemma~4.9]{MR1274094}, $C_G(C_4) = A_1$ and hence $A_1 \le C_G(Y_5)$. For $Y_6$, we note that $Y_6 < B_3^2 < B_6 < D_7$. Then, \cite[p.333 Table~3]{MR1274094} shows that $C_G(B_6) = A_1$ and hence $A_1 \le C_G(Y_6)$.

We now have enough information to conclude that $C_G(Y_3) = U_6 \bar{A}_1$,  $C_G(Y_4) = U_5 \bar{A}_1$,  $C_G(Y_5) = U_5 A_1$ and  $C_G(Y_6) = U_3 A_1$.      

\subsubsection{\texorpdfstring{$L'$}{L'} of type \texorpdfstring{$A_7$}{A7}}

Let $P = P_{1345678} = QL$ and $X_0 = B_3 < L' = A_7$ be embedded via $001$. Then the unipotent radical $Q$ has three levels, with $X_0$ actions
\begin{center}
\begin{tabular}{*{3}{c}}
$Q/Q(2)$ & $Q(2)/Q(3)$ & $Q(3)$ \\ \hline
$101 + 001$ & $0|100|010|100|0$ & $001$
\end{tabular}
\end{center}
By Proposition~\ref{prop:h1}, $H^{1}(X_0,Q(2)/Q(3)) = K$, so $\dim H^{1}(X_0,Q) \le 1$ and, considering the action of $Z(L)$, we find at most one non-$G$-cr complement to $Q$ in $QX_0$, up to conjugacy in $G$.

Let $Y$ be a subgroup of $B_3 < D_8$, embedded via $001 + 001$. By Proposition~\ref{prop:orthog_sum} and Remark~\ref{rem:a3d6-d4d8} there are two $D_8$-classes of such subgroups and both are non-$D_8$-cr. Now $D_8$ has two classes of Levi subgroup $A_7$, one of which is a Levi subgroup of $G$, and one of which is the subgroup $A_7'$. Considering the images of the non-$D_8$-cr subgroups in these Levi factors, then, one class arises from a $G$-cr subgroup contained in a subgroup $E_7$. So even if such subgroups are non-$G$-cr, they will arise from minimal parabolic subgroups with a Levi factor not conjugate to $L$. The other class, however, has irreducible image in $A_7$, which remains a Levi subgroup of $G$, so we only need to prove that these non-$D_8$-cr subgroups remain non-$G$-cr. Since $(Q/Q(2))^{X_0} = \{0\}$ and $Q(2)$ is the full unipotent radical of an $A_7$-parabolic of $D_8$, this follows from Corollary~\ref{cor:mcrgcr}.

By Lemma~\ref{lem:maxparab} we have $C_G(Y)^{\circ} = C_{Q}(Y)^{\circ}$. In an $A_7$-parabolic subgroup of $D_8$, the unipotent radical is the $28$-dimensional module $V_{A_7}(\lambda_2)$, and this is therefore a subgroup of $Q$ isomorphic to $Q(2)/Q(3)$. Since this has a $1$-dimensional space of fixed points under $Y$, we conclude that $C_{G}(Y)^{\circ} = U_1$.

\subsubsection{\texorpdfstring{$L'$}{L'} of type \texorpdfstring{$D_7$}{D7}}

In this case, $L'$ has irreducible subgroups of types $B_3$ and $B_3^2$ that we need to consider; we start with $B_3^2$. Let $P = P_{2345678} = QL$ and $X_0 = B_3^2 < L' = D_7$ via $0|( (100,0) + (0,100) )|0$. Then the unipotent radical $Q$ has two levels, with $Q/Q(2) \cong \lambda_6$ and $Q(2) \cong \lambda_1$ as $L'$-modules, and therefore $Q/Q(2) \downarrow X_0 \cong (001,001)$ and $Q(2) \downarrow X_0 = 0 | ((100,0) + (0,100)) | 0$.

From Proposition~\ref{prop:h1} we have $H^{1}(X_0,Q(2)) \cong K$ and $H^{1}(X_0,Q/Q(2)) = \{0\}$, and taking into account the action of $Z(L)$, we deduce that there is at most one non-$G$-cr complement to $Q$ in $QX_0$, up to conjugacy in $G$.

Consider the subgroup $Y = B_3^2 < D_8$ via the 16-dimensional $B_3^2$-module $(0|(100,0)|0) + (0|(0,100)|0)$. Each 8-dimensional factor has a nonsingular vector fixed by $B_3^2$. Thus the $2$-dimensional subspace spanned by these contains a singular vector fixed by $B_{3}$, and thus this subgroup $B_3^2$ is contained in a $D_7$-parabolic subgroup of $D_8$. Since this is the unique non-zero totally singular subspace fixed by $B_3^2$, we deduce that this subgroup is non-$D_8$-cr. Since $Q/Q(2)$ is an irreducible $Y$-module, Corollary \ref{cor:mcrgcr} applies, and $Y$ is also non-$G$-cr. A routine application of Lemma \ref{lem:maxparab} shows that $C_G(Y)^\circ = U_1$.

Now, if $X_0 = B_3 < L'$ via $100 \otimes 100^{[r]}$ ($r > 0$), identical arguments to the above show that $P$ contains at most one non-$G$-cr complement to $Q$ in $QX_0$, up to conjugacy. A representative is given by taking a subgroup $Z < D_8$ via $T(100) + T(100)^{[r]}$. Again, Corollary~\ref{cor:mcrgcr} tells us that this non-$D_8$-cr subgroup is non-$G$-cr, and Lemma~\ref{lem:maxparab} tells us that $C_{G}(Z)^{\circ} = U_1$. Note that, for each $r > 0$, the class representative $Z$ is a diagonal subgroup of $Y$.


\section{Type \texorpdfstring{$B_4$}{B4}} \label{sec:B4}

Let $P = P_{1345678} = QL$ and $X_0$ be of type $C_4$ embedded in $L'$ via $1000$. Then the action of $X_0$ on the levels of $Q$ are as follows: 
\begin{center}
\begin{tabular}{*{3}{c}}
$Q/Q(2)$ & $Q(2)/Q(3)$ & $Q(3)$ \\ \hline
$1000 + 0100$ & $\bigwedge\nolimits^{2}(1000)$ & $1000$
\end{tabular}
\end{center}
Since $0010$ and $\bigwedge^{2}(1000) = T(0100) = 0|0100|0$ are tilting, their first cohomology groups vanish, hence $\mathbb{V} \cong K^2$. The trivial submodule in level 2 lifts to a subgroup of $Q^{X_0}$, since $C_G(X_0)^\circ = A_1$ by \cite[p.333 Table~3]{MR1274094}. It follows easily that this induces a non-zero $X_0$-module homomorphism $Q/Q(2) \to Q(3)$. We deduce that complements are parametrised by $(k_1,k_2) \in \mathbb{V}$ with $k_1 k_2 = 0$. After considering the action of $Z(L)$, there are therefore at most two non-$G$-cr subgroups of type $B_{4}$ in such a parabolic subgroup, up to conjugacy.

Let $Y_1$ be the subgroup of type $B_4$ embedded in $A_8$ via $V_{A_8}(\lambda_1) \downarrow Y_1 = 1000 | 0$. Since $p \neq 3$ and $A_{8}$ is the centraliser of an element of order $3$, \cite[Corollary~3.1]{MR2178661} implies that $Y_1$ is non-$G$-cr. There are two $D_8$-conjugacy classes of subgroups of type $B_4$ acting as $V_{B_4}(\lambda_4)$ on the natural module for $D_8$. By \cite[Lemma 7.4]{Tho1}, one of these subgroups, denoted $B_4(\ddagger)$ in \emph{ibid}, is non-$G$-cr and non-conjugate to $Y_1$, let this subgroup be $Y_2$. Therefore $Y_1$ and $Y_2$ are representatives of two non-conjugate non-$G$-cr subgroup classes contained in $P$. 

Since $P$ is a maximal parabolic subgroup it follows from Lemma \ref{lem:maxparab} that $C_{G}(Y_1)^\circ$ and $C_{G}(Y_2)^\circ$ are equal to $Q^{Y_1}$ and $Q^{Y_2}$, respectively. In Table \ref{tab:E8p2} we see that $\dim C_{L(G)}(Y_i) = 1$ for $i=1,2$. Since $Y_2 < D_8$ and $C_{L(G)}(D_8)$ is a $1$-dimensional subalgebra generated by a semisimple vector, it follows that $C_G(Y_2)^\circ$ is trivial. Now, the complement to $Q$ in $P$ corresponding to $(0,1) \in \mathbb{V}$ centralises the lift of the trivial submodule in level $2$, since $Q(2)$ is abelian. It follows that $Y_1$ is conjugate to this non-$G$-cr subgroup and $C_G(Y_1)^\circ = U_1$.


\section{Type \texorpdfstring{$C_3$}{C3}} \label{sec:C3}

Recall that in Proposition~\ref{prop:h1}, this is the only case where $p = 3$. Let $P = P_{2345678} = QL$ be the unique standard $D_7$-parabolic subgroup of $G$ and let $X_0 = C_3 < L'$ via $010$. Then $Q/Q(2) \downarrow X_0 = 110 + 001$ and $Q(2) \downarrow X_0 = 010 + 0$. By Proposition~\ref{prop:h1}, we have $\mathbb{V} \cong K$ and hence by considering the action of $Z(L)$ we have at most one $G$-conjugacy class of non-$G$-cr complements to $Q$ in $QX_0$. 

Let $Y$ be a subgroup of type $C_3$ embedded in $D_8$ via $V_{D_8}(\lambda_1) = T(010) + 0$. Then $Y$ is non-$D_8$-cr and hence non-$E_8$-cr by Lemma \ref{lem:BMR}, since $D_8$ is the centraliser of an involution in $E_8$ when $p \neq 2$. It follows that $Y$ must be contained in $P$ since it is the only parabolic that possibly contains non-$G$-cr subgroups of type $C_3$ when $p=3$.    

Since $P$ is maximal, using Lemma~\ref{lem:maxparab} we have $C_{G}(Y)^{\circ} = C_{Q}(X_0) = Q(2)^{X_0} = U_1$.


\section{Type \texorpdfstring{$C_4$}{C4}} \label{sec:C4}

The case $G = E_7$ is covered by \cite[Lemma 2.7]{MR1367085}, yielding one conjugacy class of non-$G$-cr subgroups, with representative $Y = C_4 < A_7$ embedded via $1000$. The connected centraliser of $Y$ is $U_1$, as given in \cite[p.333 Table~3]{MR1274094}.  So we may assume that $G = E_8$.

Let $P = P_{123456} = QL$ be the unique standard $E_6$-parabolic subgroup of $G$ and let $X_0$ of type $C_4$ be a representative of the unique class of $E_6$-irreducible subgroups. Then $X_0$ acts on the levels of $Q$ as follows: 
\begin{center}
\begin{tabular}{*{5}{c}}
$Q/Q(2)$ & $Q(2)/Q(3)$ & $Q(3)/Q(4)$ & $Q(4)/Q(5)$ & $Q(5)$ \\ \hline
$0100 + 0^2$ & $0100 + 0$ & $0100 + 0$ & $0$ & $0$
\end{tabular}
\end{center}

By Proposition~\ref{prop:h1}, we have $\mathbb{V} \cong K^3$. The images of $U_{\alpha_7}$, $U_{\alpha_7 + \alpha_8}$, and $U_{01122221}$ generate the $X_0$-module $0100$ in levels 1, 2 and 3, respectively. Moreover, three trivial summands in levels 1 and 2 are respectively generated by the image of the elements $x_{\alpha_8}(c)$,  $x_{11221110}(c)$ $x_{11122110}(c)$ $x_{01122210}(c)$ and $x_{11221111}(c)$ $x_{11122111}(c)$ $x_{01122211}(c)$. It then follows that we may assume that $k_1 k_2 = k_1 k_3 = k_2 k_3 = 0$. The element $n_{8} \in N_G(T)$ centralises $X_0$ whilst swapping $U_{\alpha_7}$ and $U_{\alpha_7 + \alpha_8}$, and the element
\[ n_7 n_6 n_5 n_4 n_3 n_2 n_4 n_5 n_6 n_7 n_1 n_3 n_4 n_5 n_6 n_2 n_4 n_5 n_3 n_4 n_1 n_3 n_2 n_4 n_5 n_6 n_7\]
centralises $X_0$ whilst swapping $U_{\alpha_7 + \alpha_8}$ and $U_{01122221}$. We therefore conclude that there is at most one $G$-conjugacy class of non-$G$-cr complements to $Q$ in $Q X_0$. 

Let $Y$ be the non-$G$-cr subgroup of type $C_4$ contained in $E_7$. Then $Y$ is non-$G$-cr by Lemma \ref{lem:BMR} and hence a conjugate of $Y$ is contained in $P$. The connected centraliser of $Y$ is calculated in \cite[Lemma~4.9]{MR1274094} and is $U_5 A_1$.


\section{Type \texorpdfstring{$D_4$}{D4}} \label{sec:D4}

In this section we complete our analysis of non-$G$-cr subgroups by considering those of type $D_4$, so that according to Lemma~\ref{lem:subtypes} we have $p = 2$ and $G = E_{7}$ or $E_{8}$.

\subsection{\texorpdfstring{$G$}{G} of type \texorpdfstring{$E_7$}{E7}} \label{subsec:d4e7}

In this case, according to \cite[Theorem 1]{MR1367085}, there are infinitely many classes of subgroups of type $D_{4}$ contained in an $E_{6}$-parabolic subgroup of $G$, each having irreducible image in the Levi factor. Exactly one such class consists of $G$-completely reducible subgroups. Let $P = QL$ be an $E_{6}$-parabolic subgroup. Then $Q$ is abelian, and is an irreducible $L'$-module of high weight $\lambda_1$. Thus the non-$G$-cr subgroups of type $D_{4}$ occurring are complements to $Q$ in $QX_0$, where $X_0$ is generated by the short root subgroups of $F_{4} < L'$. We have $Q \downarrow X_0 = 0100 + 0$, hence $H^{1}(X_0,Q)$ is $2$-dimensional. Fixing a basis so that each $(a,b) \in K^2$ gives a complement to $Q$ in $QX_0$, the action of the $1$-dimensional torus $Z(L)$ shows that complements corresponding to $(a\mu, b\mu)$, as $\mu$ varies over $K^{\ast}$, are all $G$-conjugate. Next note that $L'$ contains a subgroup $D_4.S_3$. This outer automorphism group $S_3$ acts on $Q$ and thus on the collection of complements to $Q$ in $QX_0$. Concretely, this can be realised as an element of order $3$ sending a complement corresponding to $(a,b)$ to a complement corresponding to $(b,a+b)$, and an element of order $2$ swapping complements corresponding to $(a,b)$ and $(b,a)$. Thus all $G$-classes of non-$G$-cr complements correspond to a pair $(1,a)$, as $a$ varies over $K$. 

Next, we consider potential overgroups of these non-$G$-cr subgroups. First, we note that by Lemma \ref{lem:rankofcents}, the connected centraliser of every such subgroup is unipotent. Suppose that $M$ is a reductive maximal connected subgroup of $G$ containing a non-$G$-cr subgroup $Z$ of type $D_4$. Then each simple factor of $M$ must have rank at least $4$, otherwise $Z$ would project trivially to some simple factor $M_0$ and give $M_0 < C_G(Z)$. Thus $M$ must be of type $A_7$ by \cite[Theorem~1]{MR2063402}. Let $Y = D_{4} < A_{7} < G$ via $1000$. Since $N_{G}(A_7)/A_7$ has order $2$, $Y$ centralises an involution in $G$ and thus lies in a parabolic subgroup of $G$. From Table \ref{tab:E8p2}, we see that $Y$ acts with two $28$-dimensional indecomposable summands on $V_{56}$. It follows that $Y$ is not contained in any Levi subgroup of $G$ and is therefore non-$G$-cr.

Thus we have shown that precisely one of the non-$G$-cr subgroups is contained in a proper reductive overgroup in $G$, and the remaining infinitely many classes are MR.

It remains to calculate the connected centralisers. As $Q$ is abelian, $Q^{X_0} = Q^Z$ for any non-$G$-cr subgroup $Z$ contained in $P$. It follows from Lemma \ref{lem:maxparab}\ref{maxparab-ii} that $C_G(Z)^\circ = U_1$. 

\subsection{\texorpdfstring{$G$}{G} of type \texorpdfstring{$E_8$}{E8}} \label{sec:D4inE8}

\subsubsection{\texorpdfstring{$L'$}{L'} of type \texorpdfstring{$A_7$}{A7}} 
Let $P = QL$ with $L'$ of type $A_{7}$ and $X_0 < L'$ embedded via $1000$. Then $Q$ has three levels, with $X_0$-actions
\begin{center}
\begin{tabular}{ccc}
$Q/Q(2)$ & $Q(2)/Q(3)$ & $Q(3)$ \\ \hline
$1000 + 0011$ & $0|0100|0$ & $1000$
\end{tabular}
\end{center}

Proposition~\ref{prop:h1} implies that $\mathbb{V} \cong K$ and thus there exists at most one class of non-$G$-cr complement to $Q$ in $QX_0$. By Proposition \ref{prop:orthog_sum} and Remark \ref{rem:a3d6-d4d8}, there are two $D_8$-classes of non-$D_8$-cr subgroups of type $D_4$ which act as $1000 + 1000$ on $V_{D_8}(\lambda_1)$. Embedding $D_8 < G$, since the subgroups lie in a proper parabolic subgroup of $D_8$, they lie in a proper parabolic subgroup of $G$. We now prove that these subgroups are both non-$G$-cr. We start by noting that the action of each subgroup on $L(G)$, given in Table \ref{tab:E8p2}, has three indecomposable summands of dimension $64$ and no indecomposable summand of dimension less than $28$. For a contradiction, suppose that $Y$ is a subgroup in either class and that $L$ is minimal among proper Levi subgroups of $G$ containing $Y$. Since $L'$ has rank at most $7$ and $Y$ has rank $4$, it follows that $L'$ is simple, and in fact $L'$ has type $A_7$, $D_4$, $D_5$, $D_6$, $D_7$, $E_6$ or $E_7$. Knowledge of high weights on $L(G)$ \cite[Table~10.1]{MR1329942} shows that only $D_7$ can potentially have a subgroup acting with three indecomposable summands of dimension $64$. But in this case, $L(G) \downarrow D_7 = \lambda_1^2 + T(\lambda_2) + \lambda_6 + \lambda_7$ and any subgroup of $D_7$ acts on $L(G)$ with an indecomposable summand of dimension at most $14$. This contradiction proves that both classes of non-$D_8$-cr subgroups of type $D_4$ are in no proper Levi subgroup of $G$, hence are non-$G$-cr.

The two class representatives have different composition factors on $L(G)$. By Proposition~\ref{prop:h1}, only $A_7$-parabolics and $E_6$-parabolics can contain non-$G$-cr subgroups of type $D_4$, and it follows that each class of parabolics gives rise to precisely one of these non-$G$-cr subgroup classes. The fact that the subgroup $X_0 < A_7$ has a composition factor of high weight $0011$ shows that the first subgroup in Table \ref{tab:E8p2} is contained in $P$.

Finally, we calculate $C_{G}(Y)^{\circ}$ for $Y$ in this class of non-$G$-cr subgroups. Since $P$ is maximal, we can apply Lemma \ref{lem:maxparab}. There is a $C_4$ overgroup of $X_0$ in $A_7$ and $C_G(C_4)^\circ = A_1$ by \cite[p.333 Table~3]{MR1274094}). It follows that the trivial submodule in $Q(2)/Q(3)$ lifts to a subgroup of $Q$ and so $Q^{X_0}$ is $1$-dimensional. Moreover, $Y < Q(2) X_0$ and $Q^{X_0} \le Q(2)$ which is abelian, so $Q^{Y} = Q^{X_0}$. Hence $C_G(Y) = U_1$. 

\subsubsection{\texorpdfstring{$L'$}{L'} of type \texorpdfstring{$E_6$}{E6}}
Let $P = QL$ be the standard $E_{6}$-parabolic of $G$ and let $X_0$ be the $E_6$-irreducible subgroup of type $D_4$ generated by short root subgroups in an $F_4$ subgroup in $E_6$. Then $Q$ has five levels, with actions of $L'$ and $X_0$ as follows. 
\begin{center}
\begin{tabular}{*{6}{c}}
& $Q/Q(2)$ & $Q(2)/Q(3)$ & $Q(3)/Q(4)$ & $Q(4)/Q(5)$ & $Q(5)$ \\ \hline
$L'$ & $\lambda_1 + 0$ & $\lambda_1$ & $\lambda_6$ & $0$ & $0$ \\
$X_0$ & $0100 + 0^2$ & $0100 + 0$ & $0100 + 0$ & $0$ & $0$
\end{tabular}
\end{center}
The trivial summand in $Q/Q(2)$, generated by the image of the root group $U_{\alpha_8}$, induces a non-zero $E_6$-module homomorphism $Q/Q(2) \to Q(2)/Q(3)$. In the action of $X_0$ on $\lambda_1$ and $\lambda_6$, the modules of high weight $0100$ in the three levels are respectively generated by the images of the root groups $U_{\alpha_7}$, $U_{\alpha_7 + \alpha_8}$ and $U_{01122221}$. Note that $X_0$ is contained in a subgroup $Y_0 = C_{4} < E_{6}$, and as described in Section~\ref{sec:C4}, the additional trivial modules arising in $Q/Q(2)$ and $Q(2)/Q(3)$ respectively induce non-zero $Y_0$-module homomorphisms $\psi : Q(2)/Q(3) \to Q(3)/Q(4)$ and $\psi' : Q/Q(2) \to Q(3)/Q(4)$.

Now $H^{1}(X_0,0100) \cong K^{2}$, hence $\mathbb{V} \cong K^{6}$ and we denote elements of $\mathbb{V}$ by $(\phi_{1},\phi_{2},\phi_{3})$, where $\phi_{i} \in H^{1}(X_0,Q(i)/Q(i+1))$. We identify elements in the different copies of $H^{1}(X_0,0100)$ such that $x_{\alpha_8}(c)$ induces the map \[(\phi_1,\phi_2,\phi_3) \mapsto (\phi_1,\phi_2 + c\phi_1, \phi_3)\] and the additional trivial modules from $Q/Q(2)$ and $Q(2)/Q(3)$ respectively induce maps
\[ (\phi_1,\phi_2,\phi_3) \mapsto (\phi_1,\phi_2,\phi_3 + c\phi_2), \qquad (\phi_1,\phi_2,\phi_3) \mapsto (\phi_1,\phi_2,\phi_3 + c\phi_1). \]
Again, since $X_0 < Y_0$ it follows that $X_0$ is centralised by $n_8 \in N_G(T)$, which swaps the root groups $U_{\alpha_7}$ and $U_{\alpha_7 + \alpha_8}$, and centralises $U_{01122221}$. Moreover, the element
\[ w = n_7 n_6 n_5 n_4 n_3 n_2 n_4 n_5 n_6 n_7 n_1 n_3 n_4 n_5 n_6 n_2 n_4 n_5 n_3 n_4 n_1 n_3 n_2 n_4 n_5 n_6 n_7 \]
also centralises $X_0$, and swaps $U_{\alpha_7 + \alpha_8}$ and $U_{01122221}$ whilst mapping $U_{\alpha_7}$ to a negative root subgroup. Similarly, the element
\[ w' = n_{7} n_{6} n_{5} n_{4} n_{2} n_{3} n_{1} n_{4} n_{3} n_{5} n_{4} n_{2} n_{6} n_{5} n_{4} n_{3} n_{1} n_{7} n_{6} n_{5} n_{4} n_{2} n_{3} n_{4} n_{5} n_{6} n_{7}  \]
centralises $X_0$, swaps $U_{\alpha_7}$ with $U_{01122221}$ and maps $U_{\alpha_7 + \alpha_8}$ to a negative root subgroup.

We now use the above elements to parametrise complements to $Q$ in $QX_0$, including $X_0$ itself, by elements of $\mathbb{P}^1 \cup \{ \text{3 points} \}$. To do this, we divide into a number of cases.

In the case that $(\phi_1,\phi_2) = (0,0)$, conjugating by $w'$ sends the corresponding complement to one corresponding to $(\phi_3,0,0) \in \mathbb{V}$. Such a subgroup is contained in a Levi subgroup of $G$ of type $E_7$, and is non-$E_7$-cr. We will shortly show that such subgroups are $G$-conjugate if and only they are $E_7$-conjugate, hence there are infinitely many such subgroups, as described in Section~\ref{subsec:d4e7}.

Now suppose that $(\phi_1,\phi_2) \neq (0,0)$ but that $\phi_1$ and $\phi_2$ are multiples of one another under the above identification of the copies of $H^{1}(X_0,0100)$. Then, conjugating by $n_8$ if necessary, we can assume that $\phi_1 \neq 0$. Conjugating by an appropriate element $x_{\alpha_8}(c)$ then allows us to assume that $\phi_2 = 0$. This complement therefore corresponds to an element of the form $(\phi_1,0,\phi_3) \in \mathbb{V}$ where $\phi_1 \neq 0$. If $\phi_3$ is a multiple of $\phi_1$ under the identification above, then conjugating by an appropriate $X_0$-fixed point of $Q(2)/Q(3)$ shows that the complement corresponds to $(\phi_1,0,0)$, which again lies in an $E_7$ Levi subgroup of $G$ as above. Thus we may assume that $\{\phi_1,\phi_3\}$ form a basis of $H^{1}(X_0,0100)$. Now the element $w'$ together with the $X_0$-fixed points of $Q(2)/Q(3)$ and $Z(L)$, fuse together all classes corresponding to any such basis of $H^{1}(X_0,0100)$ (this is essentially the fact that ${\rm GL}_2(K)$ is transitive on ordered bases of its natural module). Thus this case gives rise to at most one new class of subgroups. In Lemma~\ref{lem:d4actione8} we will show that the indecomposable direct summands of such a group on $L(G)$ are incompatible with $L(G) \downarrow E_7$, so that these subgroups are not contained in a subgroup of type $E_7$.

Finally, suppose that $\{\phi_1,\phi_2\}$ form a basis of $H^{1}(X_0,0100)$. Conjugating by the $X_0$-fixed points which give rise to the maps $\psi$ and $\psi'$ above, we may assume that $\phi_3 = 0$. Then the element $n_8$, together with $U_{\alpha_8}$ and $Z(L)$, again fuses together all classes corresponding to triples $(\phi_1,\phi_2,0)$ where $\{\phi_1,\phi_2\}$ is a basis of $H^1(X_0,0100)$. Thus in this case we find at most one more non-$G$-cr subgroup, up to conjugacy. Again, in Lemma~\ref{lem:d4actione8} we derive the actions of these subgroups on $L(G)$, and see that these are incompatible with the subgroup being contained in $E_7$, or being conjugate to any of the non-$G$-cr subgroups found above. Thus these do indeed give rise to a distinct class of non-$G$-cr subgroups.

In summary, supposing that all potential complements above actually exist, we have shown that non-$G$-cr subgroups correspond to one of: $(\phi,0,0)$ with $\phi$ a non-zero point of $H^{1}(X_0,0100)$; or to $(\phi,0,\psi)$ or to $(\phi,\psi,0)$ for a fixed basis $\{\phi,\psi\}$ of $H^{1}(X_0,0100) \cong K^2$. In the former case, these are parametrised by elements $(a,b) \in K^2$ modulo the action of $Z(L)$ and a subgroup $S_3$, as in \ref{subsec:d4e7} above. In each of the latter two cases we get at most one extra non-$G$-cr subgroup.

Let us first consider the infinitely many non-$G$-cr subgroups corresponding to elements $(\phi,0,0) \in \mathbb{V}$, each contained in a Levi subgroup $M$ of type $E_7$. We will show that two such subgroups are $G$-conjugate if and only if they are $M'$-conjugate. Let $Y$ be such a non-$G$-cr subgroup. To start, note that $M'$ centralises a connected simple subgroup $\bar A_1$ (generated by root subgroups corresponding to $\pm \alpha_0$), hence so does $Y$. Moreover, note that conjugation by $w'$ sends $Y$ to a subgroup corresponding to $(0,0,\phi) \in \mathbb{V}$; since this is contained in $Q(3)X_0$, it follows that $Y$ commutes with the $6$-dimensional unipotent subgroup $Q^{X_0}$ (the induced $Y$-module homomorphisms $Q(3) \to Q$ induced by elements of $Q^{X_0}$ are all zero as their image lies in $Q(4)$, and neither $Q(4)/Q(5)$ nor $Q(5)$ have a non-trivial composition factor). By Lemma \ref{lem:unip-cent}, $Q^{X_0}$ is a maximal unipotent subgroup of $C_G(Y)$, for all subgroups $Y$ corresponding to an element $(0,0,\phi) \in \mathbb{V}$. And Lemma \ref{lem:rankofcents} shows that $C_G(Y)$ is rank $1$. Since $Q^{X_0}$ meets the subgroup $\bar A_1$ in a $1$-dimensional connected unipotent group, we conclude that $C_G(Y)^\circ = U_5 \bar A_1$. 

Thus if $Y$ and $Z$ are two non-$G$-cr complements to $Q$ in $QX_0$, each contained in $M'$ (a fixed copy of $E_7$), we have $C_G(Y)^{\circ} = C_G(Z)^{\circ} = UH$, where $U$ is a $5$-dimensional unipotent subgroup and $H$ is simple of type $A_1$. Now suppose $g \cdot Y = Z$ for some $g \in G$. Then $g$ normalises $C_G(Y)^{\circ}$ and so $H$ and $g \cdot H$ are simple subgroups of $UH$ of type $A_1$. A quick calculation shows that $UH$ is $G$-conjugate to the subgroup generated by $Q^{X_0}$ and $U_{-\alpha_8}$, with the subgroup $A_1$ generated by the root subgroups $U_{\pm \alpha_8}$. Using the commutator relations in $G$, it follows that $U$ has a filtration by $H$-modules of high weight $0$ and $1$. Each of these modules has zero first cohomology group, hence $H^{1}(H,U)$ vanishes and $g \cdot H$ is $U$-conjugate to $H$ (note that although $H^1(H,1^{[1]})$ is $1$-dimensional, the resulting non-trivial complements cannot be $G$-conjugate to $H$ since they are not algebraically isomorphic to $H$; the projection $UH \to H$ has a scheme-theoretic kernel on restriction to $H$). Therefore $(ug) \cdot H = H$ for some $u \in U$. Since $U \le C_G(Y) = C_G(Z)$, we have $(ug) \cdot Y = u \cdot Z = Z$, and $ug \in N_G(H) = H E_7$. Finally, this means we may take some $h \in H$ such that $(hug) \cdot Y = h \cdot Z = Z$ with $hug \in E_7$; hence $Y$ and $Z$ are conjugate under an element of $E_7$.

We now claim there are indeed two distinct classes of non-$G$-cr subgroups of type $D_4$ contained in $P$ but not contained in $E_7$; we exhibited the first of these in the previous case of $P_{1345678}$. Indeed, there is a non-$D_8$-cr subgroup $Y_1$ acting as $1000 + 1000$ on $V_{D_8}(\lambda_1)$ which is contained in $P$. Now let $Y_2$ be the subgroup generated by the set of eight Chevalley generators $y_{\pm \beta_i}(t)$ for $i=1, \ldots, 4$ in Proposition \ref{prop:D4inE8gens} with $a_1=a_4=1$ and $a_i = 0$ for $i = 2,3,5,6$. This is a subgroup of type $D_4$ contained in $P_{1345678}$ by construction. Furthermore, in Lemma~\ref{lem:d4actione8}, we show that $Y_2$ acts with an indecomposable summand of dimension $188$. No maximal reductive subgroup of $G$ or proper Levi subgroup of $G$ has an indecomposable summand of this dimension or higher, hence $Y_2$ is non-$G$-cr and maximal among proper connected reductive subgroups of $G$. 

Finally we calculate $C_{G}(Y_1)^\circ$ and $C_{G}(Y_2)^\circ$. These are unipotent groups because neither $Y_1$ nor $Y_2$ is contained in a Levi subgroup of $G$. Inspecting Table \ref{tab:E8p2}, we find that $\dim C_G(Y_1) \le 5$. Since $Y_1 < D_8$, it follows (as for the subgroup $Y_7$ in Section \ref{subsec:b3ine8a5}) that $\dim C_G(Y_1) \le 4$. Again inspecting Table \ref{tab:E8p2}, we find that $\dim C_G(Y_2) \le 5$. We know from the above analysis that these two extra subgroups come from the cases $(\phi_1,0,\phi_3) \in \mathbb{V}$, which are conjugate to $(0,\phi_1,\phi_3) \in \mathbb{V}$, and $(\phi_1,\phi_2,0) \in \mathbb{V}$, where $\phi_1, \phi_3$ and $\phi_1,\phi_2$ form a basis of $K^2$, respectively (though we do not yet know which corresponds to $Y_1$ or $Y_2$). It is routine to check that a subgroup corresponding to $(0,\phi_1,\phi_3)$ centralises a $5$-dimensional subgroup of $Q^{X_0}$, and such a subgroup is therefore conjugate to $Y_2$ and $C_G(Y_2)^\circ = U_5$. Another routine check shows that a subgroup corresponding to $(\phi_1,\phi_2,0)$ centralises a $4$-dimensional subgroup of $Q^{X_0}$; this is therefore a conjugate of $Y_1$ and $C_G(Y_1)^\circ = U_4$.


\section{Actions of non-\texorpdfstring{$G$}{G}-cr subgroups} \label{sec:actions}

In this section we determine the restrictions $L(G) \downarrow X$ and $\Vmin \downarrow X$ for the non-$G$-cr subgroups $X$ arising in Theorem~\ref{THM:MAIN}. When $X$ is not of type $D_4$, it has a proper reductive overgroup in $G$ and we use this to deduce the required restrictions. More details about this method can be found in \cite[Section~10]{Litterick2018}. The case when $X$ has type $D_4$ and $p = 2$ is by far the most difficult to analyse; the remainder of this section is dedicated to giving a full treatment of this scenario.

So now let $p = 2$ and suppose firstly that $G = E_7$, with $X$ contained in $P = QL$ with $L'$ of type $E_6$, such that the image $X_{0}$ of $X$ in $L$ lies in a subgroup of type $F_4$ and is generated by short root subgroups of this. Recall from Section~\ref{sec:D4} that $Q$ is abelian, with $Q \downarrow X_0 = 0100 + 0$, and the $G$-conjugacy class of $X$ corresponds to $\phi \in K^2$ where we can take $\phi = (1,a)$ with $a \neq 0$. Furthermore, precisely one of these infinitely many classes consists of subgroups lying in a proper reductive overgroup of type $A_7$, and the rest are MR (maximal among proper connected reductive subgroups of $G$).

\begin{lemma} \label{lem:b3-in-d4}
In the above set-up, let $Y < X$ be a subgroup of type $B_3$. Then $Y$ is non-$G$-cr, and is conjugate to the subgroup $B_3 < A_7$ via $001$ given in Table~\ref{tab:E7}.
\end{lemma}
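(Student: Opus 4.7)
My plan has two parts: first, establish that $Y$ is non-$G$-cr (not automatic from $X$ being non-$G$-cr); second, identify $Y$'s $G$-conjugacy class among the four candidates in Table~\ref{tab:E7}.

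For the first part, recall $X$ sits in $P = QL$ with image $X_0 \cong D_4$ (the short-root $D_4$ in $F_4 < L' = E_6$) and is determined by a cocycle $\phi : X_0 \to Q$ of class $(1,a) \in H^{1}(X_0, Q) \cong K^2$ with $a \neq 0$, where $Q \downarrow X_0 = V_{F_4}(\lambda_4) + 0$. Let $Y_0 < X_0$ be a $B_3$ subgroup; then $Y$ is defined by the restricted cocycle $\phi|_{Y_0}$. The plan is to decompose $V_{F_4}(\lambda_4) \downarrow Y_0$ (using \cite{MR888704} or direct weight-space computations), identify a summand with non-vanishing first cohomology, and show that the restriction map $H^{1}(X_0, Q) \to H^{1}(Y_0, Q)$ sends the class of $\phi$ to a non-trivial class for every $a \neq 0$. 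Since $Y_0 < L'$ is $L'$-irreducible and hence $G$-cr by Lemma~\ref{lem:BMR}, non-triviality of the restricted class forces $Y$ to be non-$G$-cr. Triality inside $F_4 \supset D_4.S_3$ fuses the three $D_4$-classes of $B_3$ subgroups of $X_0$, so the $G$-conjugacy class of $Y$ does not depend on the choice of $Y_0$.

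For the second part, the containment $Y < X$ gives $C_G(X)^{\circ} = U_1 \subseteq C_G(Y)^{\circ}$, so $C_G(Y)^{\circ}$ is a non-trivial unipotent group containing no semisimple element; in particular $Y$ does not lie in the classes $B_3 < F_4$ (centraliser $U_1 \bar{A}_1$) or $B_3 < A_6$ via $W(100)$ (centraliser $U_2 T_1$). It remains to distinguish $B_3 < A_7$ via $T(100)$ (centraliser $U_2$) from $B_3 < A_7$ via $001$ (centraliser $U_1$). I will compute $V_{56} \downarrow Y$ by restricting successively through $E_6 T_1 < E_7$, $F_4 < E_6$, the short-root $D_4 < F_4$, and finally $Y_0 < D_4$, using $V_{56} \downarrow E_6 T_1 = (V_{27}, 1) + (V_{27}^{\ast}, -1) + (0, 3) + (0, -3)$ and $V_{27} \downarrow F_4 = V_{26} + 0$; the cocycle twist then controls which direct sums become non-split extensions on passing from $X_0$ to $X$ and from $Y_0$ to $Y$. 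Comparing the resulting composition factors and indecomposable summand dimensions against Table~\ref{tab:E7} will single out $B_3 < A_7$ via $001$.

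The hardest step will be the cohomological verification in the first part. The module $V_{F_4}(\lambda_4) \downarrow D_4$ is non-semisimple in characteristic $2$ owing to the exceptional isogeny behavior of $F_4$, and checking that $\phi|_{Y_0}$ is non-cohomologous to zero for all $a \neq 0$ may require an explicit description of $\phi$ via root-group elements (in the style of Proposition~\ref{prop:B3inE8gens}) together with Chevalley commutator computations, likely assisted by computer algebra.
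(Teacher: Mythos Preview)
Your proposal contains two errors.

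First, the claim that $Y_0$ is $L'$-irreducible is false. Here $L' = E_6$, and $Y_0 = B_3 < \tilde{D}_4 < F_4 < E_6$ is precisely the non-$E_6$-cr subgroup appearing in Table~\ref{tab:E6}, so it lies in a proper parabolic of $E_6$. This is not fatal: what you actually need is that a non-trivial class $[\phi|_{Y_0}] \in H^{1}(Y_0,Q)$ forces $Y$ not to be $Q$-conjugate to $Y_0$, and by Lemma~\ref{lem:associated} a $G$-cr subgroup of $P$ is always $Q$-conjugate to its image in $L$. No irreducibility of $Y_0$ is required.

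Second, and more seriously, your centraliser argument in Part~2 is invalid. From $C_G(X)^{\circ} = U_1 \subseteq C_G(Y)^{\circ}$ you cannot conclude that $C_G(Y)^{\circ}$ is unipotent: the groups $U_1 A_1$ and $U_2 T_1$ (the connected centralisers of the first two candidate $B_3$-classes) each contain a $1$-dimensional unipotent subgroup, so this containment eliminates nothing. You would have to compute $V_{56} \downarrow Y$, or some other invariant, from scratch to separate all four classes.

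By contrast, the paper passes to a further subgroup $Z_0 < Y_0$ of type $A_3$, taken among the $A_3$-Levi subgroups of $X_0$. Each restriction $H^{1}(X_0,0100) \to H^{1}(Z_0,0100 \downarrow Z_0)$ has $1$-dimensional kernel, and since the graph-automorphism group $S_3$ permutes the three $A_3$-Levis while acting irreducibly on $H^{1}(X_0,0100) \cong K^{2}$, no non-zero class lies in all three kernels; so $Z_0$ can be chosen so that the given class survives, and then it survives for $Y_0$ as well. This dispatches Part~1 without the explicit root-group calculations you anticipate. For Part~2, the paper recognises the resulting non-$G$-cr subgroup $Z$ of type $A_3$ among the two classes in Table~\ref{tab:E7}, computes $\dim(V_{56}^{Z}) = 2$, and observes that an $A_3$ inside any of the first three $B_3$-candidates would have $\dim(V_{56}^{A_3}) = 4$. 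This pins down $Y$ as $B_3 < A_7$ via $001$ without determining $V_{56} \downarrow Y$ in full.
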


\begin{proof}
Let $X_0$ be the ($G$-cr) image of $X$ in the Levi subgroup $L$, and let $Z_0 < Y_0 < X_0$ where $Z_0$ is simple of type $A_{3}$ and $Y_{0}$ is the image of $Y$ (simple of type $B_3$). We will show that $Z_0$ can be chosen such that the restriction map $H^{1}(X_0,0100) \to H^{1}(Z_0,0100)$ sends the non-zero cohomology class giving rise to $X$, to a non-zero class. It then follows that the corresponding complement, $Z$, is non-$G$-cr, and also restriction of this class to $H^{1}(Y_0,0100)$ is non-zero, so that $Y$ is also non-$G$-cr.

In a simply connected group, the derived subgroup of a Levi factor is again simply connected. Therefore, the Lie algebra of an $A_3$-Levi subgroup in a simply connected group of type $D_4$ has socle series $101|0$ as an $A_3$-module. Since $L(A_3)$ is a submodule of $L(D_4) = 0100|0^2$, and since $H^{1}(D_4,0100) \cong K^2$ and $H^{1}(A_3,101) \cong K$ it follows that the restriction map $H^{1}(D_4,0100) \to H^{1}(L',0100)$ has a $1$-dimensional kernel, for each choice of $A_3$-Levi subgroup $L$. Moreover the group of graph automorphisms $S_3$ acts on the centre of $L(D_4)$ and on $H^{1}(D_4,0100)$, giving each the structure of an irreducible $2$-dimensional $S_3$-module. In particular this action preserves no $1$-space, and it follows that for each non-trivial cohomology class in $H^{1}(D_4,0100)$, there is a simple subgroup $A_3$ whose class is not in the kernel of the restriction map.

As above, let $Z_0 < Y_0 < X_0$ where $Y_0$ has type $B_3$ and $Z_0$ has type $A_3$. Choosing $Z_0$ so that the cohomology class giving rise to $X$ does not lie in the kernel of $H^{1}(X_0,Q) \to H^{1}(Z_0,Q)$, the class remains non-trivial for both $Y_0$ and $Z_0$. Therefore the corresponding complements $Z < Y < X$ are respectively not $Q$-conjugate to $Z_0$, $Y_0$ and $X_0$, hence are non-$G$-cr. Considering the composition factors of $Z$ and $Y$ on $L(G)$ and on $\Vmin$, we identify $Z$ as the subgroup in Table~\ref{tab:E7} with $\Vmin \downarrow Z = 010^{4} + T(101)^2$. Then $\dim (\Vmin^{Z}) = 2$. Looking at the first three subgroups $B_3$ in Table~\ref{tab:E7}, the corresponding subgroups $A_3$ will have a four-dimensional fixed-point space on $\Vmin$, and therefore $Y$ is a conjugate of the final subgroup $B_3$, embedded in the subsystem subgroup of type $A_7$ of $G$ via $001$, as claimed. 
\end{proof}

\begin{proposition} \label{prop:D4restrictionMin} In the above set-up:
\begin{enumerate}
	\item If $X < A_7$ then $\Vmin \downarrow X = (0|\lambda_2|0)^2$. \label{prop:d4-action-i}
	\item If $X$ is MR then $\Vmin \downarrow X = 0^2 | \lambda_2^2 | 0^2$ is indecomposable. \label{prop:d4-action-ii}
\end{enumerate}
\end{proposition}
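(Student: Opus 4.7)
The plan is to determine $V_{56} \downarrow X$ using the $P$-module structure of $V_{56}$ combined with the cocycle description of $X$ as a complement in $QX_0$.

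First I would establish the $P$-module structure of $V_{56}$. Being minuscule, $V_{56} \downarrow L = E_6 T_1$ is semisimple with $L'$-composition factors $V_{L'}(\lambda_1), V_{L'}(\lambda_6), 0, 0$ distributed across four distinct $T_1$-weight spaces. Because $Q$ is abelian, raises $T_1$-weight, and $V_{56}$ is $G$-irreducible, the $L'$-equivariant components of the $Q$-action (namely $Q \otimes V_{L'}(\lambda_1) \to V_{L'}(\lambda_6)$ coming from the $E_6$-cubic form, $Q \otimes 0 \to V_{L'}(\lambda_1)$ given by the identity of $Q = V_{L'}(\lambda_1)$, and $Q \otimes V_{L'}(\lambda_6) \to 0$ given by the natural pairing) are non-zero, forcing $V_{56}$ to be uniserial as a $P$-module, of shape $0 \,|\, V_{L'}(\lambda_1) \,|\, V_{L'}(\lambda_6) \,|\, 0$ (head to socle). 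Restricting to $X_0 < F_4 < L'$, each $27$-dimensional $L'$-factor restricts to $\lambda_2 + 0$; hence the $X$-composition factors of $V_{56}$ are $\lambda_2^2 + 0^4$ for any complement $X \leq QX_0$, and the $T_1$-weight filtration remains $X$-stable.

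For part (i), since $X$ is conjugate to $D_4 < A_7 < G$ via the natural module, minuscule branching gives $V_{56} \downarrow A_7 = V_{A_7}(\lambda_2) + V_{A_7}(\lambda_6)$, a sum of two $28$-dimensional irreducibles. Each summand is $\bigwedge^{2} V_{A_7}(\lambda_1)$, which restricts to $X$ as $\bigwedge^{2} V_{D_4}(\lambda_1) = T(\lambda_2) = 0 \,|\, \lambda_2 \,|\, 0$ in characteristic $2$. Self-duality of $T(\lambda_2)$ handles both summands, yielding $\Vmin \downarrow X = T(\lambda_2)^{2}$, as required.

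For part (ii), the crucial step will be showing $\dim V_{56}^{X} = 2$ by explicit cocycle analysis. I would decompose an $X$-fixed vector as $v = \sum_{i} v_{i}$ by $T_1$-weight, with each $v_i$ lying in the $X_0$-fixed subspace of the weight-$i$ layer, and impose the invariance condition $\phi(x_{L}) \cdot (x_{L} \cdot v) = v$, separating by weight. The non-vanishing raising map $Q \otimes V_{-3/2} \to V_{-1/2}$ and the appropriate $X_0$-isotypic component of $Q \otimes V_{-1/2} \to V_{+1/2}$ would force $v_{-3/2} = v_{-1/2} = 0$, since the image of $\phi$ (chosen to lie in the $\lambda_2$-isotype of $Q$) acts non-trivially on these lower-weight trivial vectors. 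In the opposite direction, the natural pairing $V_{L'}(\lambda_1) \otimes V_{L'}(\lambda_6) \to K$ restricts (by Schur's lemma applied to the $X_0$-isotypic decomposition) to pair $\lambda_2$-isotype only with $\lambda_2$-isotype, so the $\lambda_2$-valued cocycle $\phi$ pairs trivially with the $X_0$-trivial vector in $V_{+1/2}$, leaving $v_{+1/2}$ and $v_{+3/2}$ unconstrained. Hence $V_{56}^{X}$ is two-dimensional and entirely trivial; by self-duality of $V_{56}$ as a $G$-module, the head $V_{56}/\mathrm{rad}_X(V_{56})$ is likewise two-dimensional trivial.

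Given socle and head both $0^2$ and composition factors $\lambda_2^2 + 0^4$, the only two possible Loewy structures for $V_{56} \downarrow X$ are the decomposable $T(\lambda_2)^{2}$ and the indecomposable $0^2 \,|\, \lambda_2^2 \,|\, 0^2$. To exclude the former for MR $X$, I would argue that an $X$-equivariant direct sum decomposition $V_{56} = W_1 \oplus W_2$ into two $28$-dimensional summands each isomorphic to $T(\lambda_2)$ would place $X$ inside the identity component of the $G$-stabiliser of such a decomposition; inspecting the Liebeck--Seitz classification of maximal connected subgroups of $G$, the only proper connected reductive subgroup of $G$ admitting a splitting of $V_{56}$ into two $28$-dimensional pieces is the subsystem subgroup $A_7$, contradicting the MR hypothesis. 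The hardest step of this plan is the fixed-point computation, which rests on carefully tracking the $X_0$-isotypic decomposition of the $L'$-equivariant maps describing the $Q$-action between successive $T_1$-weight layers of $V_{56}$.
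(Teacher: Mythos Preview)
Your approach to part~(i) matches the paper's. For part~(ii), however, you take a substantially different route. The paper instead invokes Lemma~\ref{lem:b3-in-d4}: any subgroup $Y < X$ of type $B_3$ is non-$G$-cr and conjugate to the subgroup $B_3 < A_7$ via $001$; thus $V_{56} \downarrow Y$ is already known (from Table~\ref{tab:E7}) to be a sum of two indecomposable $28$-dimensional pieces which are precisely the $A_7$-summands. Since $A_7$ is a maximal connected subgroup of $G$, it is the identity component of the full stabiliser of this particular decomposition. Hence if $X$ preserved any $28+28$ decomposition, so would $Y$, forcing that decomposition to coincide with the $A_7$-one and giving $X \le A_7$, contrary to the MR hypothesis. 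The socle series then follows from indecomposability, self-duality and a short argument with Weyl modules and the vanishing of $\Ext^1(\lambda_2|0^2,0^2|\lambda_2)$.

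Your direct cocycle computation of $\dim V_{56}^{X} = 2$ is a reasonable alternative, but two points need attention. First, the step forcing $v_{-1/2} = 0$ requires the $X_0$-isotypic component $\lambda_2 \otimes 0 \to \lambda_2$ of the cubic map $V_{L'}(\lambda_1)^{\otimes 2} \to V_{L'}(\lambda_6)$ to be non-zero; this is true (e.g.\ via the Jordan-algebra interpretation: the trivial summand is the identity element and multiplies as the identity on the $26$-dimensional piece), but you do not justify it. Second, and more seriously, your final stabiliser argument is incomplete. You assert that the connected stabiliser in $G$ of a $28+28$ splitting is a proper reductive subgroup identifiable from the Liebeck--Seitz tables, but nothing you have said rules out the possibility that this stabiliser has non-trivial unipotent radical, or that its identity component simply equals $X$ itself; in either case the MR hypothesis gives no immediate contradiction. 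The paper's use of the $B_3$ subgroup sidesteps this entirely by pinning down the only candidate decomposition as the $A_7$-one, so that maximality of $A_7$ closes the argument.
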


\begin{proof}
The restriction for the subgroup $X < A_7$ follows readily since $A_7$ acts on $\Vmin$ with two direct summands: the alternating squares of the natural $8$-dimensional $A_7$-module and its dual. 

So suppose that $X$ is MR and take a subgroup $Y < X$ of type $B_3$; by Lemma~\ref{lem:b3-in-d4} this is a non-$G$-cr subgroup contained in a subsystem subgroup $H$ of type $A_7$, via $001$. Inspecting Table~\ref{tab:E7}, the module $\Vmin \downarrow Y$ is a direct sum of two indecomposable $28$-dimensional modules, both of which are $H$-submodules. Since $H$ is maximal among proper connected subgroups of $G$, it follows that $H$ is the identity component of the full stabiliser in $G$ of this direct-sum decomposition of $\Vmin$. Since $X$ is not contained in such a subsystem subgroup of $G$, we see that $X$ cannot stabilise such a direct-sum decomposition and so $\Vmin \downarrow X$ is indecomposable as claimed. To obtain the socle series in this case, note that $\Vmin \downarrow X$ is self-dual with composition factors $0^{4}/\lambda_2^2$. There are no uniserial $X$-modules of shape $\lambda_2|0^{i}|\lambda_2$ for $i \ge 0$, since these would be generated by a highest weight vector and would hence be an image of the Weyl module $\lambda_2|0^2$, which is absurd. Since $\Vmin \downarrow X$ is indecomposable, it follows that it cannot have any irreducible quotient $\lambda_2$; since $\Ext^{1}(\lambda_2|0^{2},0^{2}|\lambda_2) = \{0\}$ by \cite[Proposition 4.13]{MR2015057}, the stated socle series is the only remaining possibility.
\end{proof}

\begin{proposition} \label{prop:D4restrictionAdj} In the above set-up:
\begin{enumerate}
		\item If $X < A_7$ then \label{prop:d4-ad-action-i}
		\[ L(G) \downarrow X = (\lambda_2|(2\lambda_1 + 0^2)|\lambda_2) +  (\lambda_2|(2\lambda_3 + 2\lambda_4 + 0)|(\lambda_2+0)) + 0. \]
	\item \label{prop:d4-ad-action-ii} If $X$ is MR then $L(G) \downarrow X$ is indecomposable with socle series 
	\[ \lambda_2^2|(2\lambda_1 + 2\lambda_3 + 2\lambda_4 + 0^3)|(0^2 + \lambda_2^2). \]
\end{enumerate}
\end{proposition}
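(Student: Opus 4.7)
The plan is to treat Part~\ref{prop:d4-ad-action-i} via the reductive overgroup $A_7 < G$, and to exploit the MR property in Part~\ref{prop:d4-ad-action-ii} to force full indecomposability. In both cases, the composition factors of $L(G) \downarrow X$ agree with those of $L(G) \downarrow X_0$, where $X_0$ is the $G$-cr image of $X$ in the Levi factor $L' = E_6$, since $X$ differs from $X_0$ only by a cocycle $\phi : X_0 \to Q$ into the abelian unipotent radical $Q$.

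For Part~\ref{prop:d4-ad-action-i}, I would begin with the standard $A_7$-composition $L(G) \downarrow A_7$ having composition factors $L(A_7) + V_{A_7}(\lambda_4)$ (summing to $63 + 70 = 133$). Each piece restricts to $X = D_4$ via $V_{A_7}(\lambda_1) \downarrow X = 1000$. The piece $L(A_7) \cong \mathfrak{sl}(V)$ restricts by analysing $V \otimes V^{\ast} / K \cdot I \downarrow X$ via the self-dual natural module, using the characteristic-$2$ structure of $\bigwedge^{2} V$ and $S^{2} V$ as $D_4$-modules. This yields a $62$-dimensional indecomposable $X$-summand with socle series $\lambda_2|(2\lambda_1 + 0^2)|\lambda_2$, together with a trivial summand coming from the $1$-dimensional centre of $\mathfrak{sl}_8$ in characteristic~$2$ (since $p \mid 8$). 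The piece $V_{A_7}(\lambda_4) \cong \bigwedge^{4} V$ restricts via weight-space analysis on the eight weights $\pm\varepsilon_i$ of $V$ as a $D_4$-module, yielding the claimed $70$-dimensional indecomposable $\lambda_2|(2\lambda_3 + 2\lambda_4 + 0)|(\lambda_2 + 0)$. The Loewy structure on each piece follows from the Weyl-module structures of $W_{D_4}(\lambda_2)$, $W_{D_4}(2\lambda_1)$, $W_{D_4}(2\lambda_3)$ and $W_{D_4}(2\lambda_4)$ in characteristic~$2$.

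For Part~\ref{prop:d4-ad-action-ii}, the strategy mirrors the proof of Proposition~\ref{prop:D4restrictionMin}\ref{prop:d4-action-ii}: if $L(G) \downarrow X$ were decomposable, then $X$ would lie in the identity component of the $G$-stabiliser of some nontrivial direct-sum decomposition. Part~\ref{prop:d4-ad-action-i} identifies the natural candidate stabiliser as the subsystem $A_7$, which is the full connected reductive stabiliser of the $62 + 70 + 1$ decomposition found there. Since $X$ is MR and by construction (see Section~\ref{subsec:d4e7}) contained in no proper reductive overgroup of $G$, the subgroup $X$ cannot preserve this decomposition, and $L(G) \downarrow X$ must therefore be indecomposable. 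The self-duality of $L(G)$ then forces the socle and head to be isomorphic, and the $\Ext^{1}$-groups among the composition factors determine the unique three-layer socle series $\lambda_2^2|(2\lambda_1 + 2\lambda_3 + 2\lambda_4 + 0^3)|(0^2 + \lambda_2^2)$.

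The main obstacle will be the detailed $\Ext^{1}$-analysis in Part~\ref{prop:d4-ad-action-ii}: while the dimension count and composition factors come directly from Part~\ref{prop:d4-ad-action-i}, ruling out intermediate Loewy configurations requires a case analysis of extensions among the $D_4$-modules $\lambda_2$, $2\lambda_1$, $2\lambda_3$, $2\lambda_4$ and the trivial module in characteristic~$2$. These extensions can be computed via dimension-shifting from the Weyl-module structures, using that $L(G) \downarrow X$ is a self-dual indecomposable $X$-module whose socle cannot contain any composition factor admitting a further extension into the middle layer.
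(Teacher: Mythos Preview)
Your approach to Part~\ref{prop:d4-ad-action-i} via direct weight-space analysis on the $A_7$-pieces is reasonable and differs from the paper, which instead passes to a $B_3$ subgroup $Y < X$ (Lemma~\ref{lem:b3-in-d4}) and reads off the $X$-structure from the known $Y$-action. Your route is more self-contained but requires more explicit module computations.

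Part~\ref{prop:d4-ad-action-ii}, however, has two genuine gaps.

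First, your indecomposability argument only shows that $X$ does not preserve the \emph{particular} $62+70+1$ decomposition coming from $A_7$; it does not rule out $X$ preserving some \emph{other} direct-sum decomposition of $L(G)$. The paper closes this gap via the $B_3$ subgroup $Y < X$: since $Y$ lies in a subsystem $A_7$, and since the only $Y$-indecomposable summands of $L(G)$ are $M_1$, $M_{62}$, $M_{70}$ (each an $A_7$-submodule), any $X$-decomposition is built from these three pieces and hence is $A_7$-stable. Maximality of $A_7$ then forces $X \le A_7$, a contradiction. Without invoking $Y$ (or an equivalent rigidity statement about the $X$-summands), your argument is incomplete.

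Second, your claim that $L(G)$ is self-dual is false here: $G = E_7$ is simply connected and $p = 2$, so $L(G)$ has a $1$-dimensional centre sitting in the socle, giving $L(G) \not\cong L(G)^{\ast}$. Indeed, the stated socle series has head $\lambda_2^2$ and socle $0^2 + \lambda_2^2$, visibly not self-dual. The paper circumvents this by working inside the genuinely self-dual tilting module $T_{E_7}(\lambda_2)$, which contains $L(G)$ as a codimension-$1$ submodule, and by exhibiting explicit $X$-submodules of $L(G)$: the centre $Z(L(G))$, the abelian ideal $L(Q) \downarrow X = 0 + \lambda_2$, and $L(X) = 0^2|\lambda_2$ (using that $X$ is of adjoint type). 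These pin down the socle as $0^2 + \lambda_2^2$, after which the remaining layers are forced. A bare $\Ext^1$ bookkeeping argument from self-duality, as you propose, cannot work.
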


\begin{proof}
In Proposition~\ref{prop:D4restrictionMin} we showed that the subgroups $X$ in \ref{prop:d4-ad-action-i} lie in a subgroup $A_7$, which acts on $L(G)$ as a direct sum $V_{A_7}(\lambda_1 + \lambda_7) + V_{A_7}(\lambda_4) + 0$. Take $Y < X$ of type $B_3$, recalling from Lemma~\ref{lem:b3-in-d4} that this is a conjugate of the final subgroup in Table~\ref{tab:E7}). The indecomposable summands of $X$ on $L(G)$ have the dimensions of the stated modules. The module structure itself follows at once from inspecting $L(G) \downarrow Y$ and comparing this with the action of $Y$ on each $X$-composition factor.

For \ref{prop:d4-ad-action-ii}, we again take $Y < X$ of type $B_3$ and consider its action on $L(G)$ in Table~\ref{tab:E7}. This has three indecomposable summands, denoted $M_1 = Z(L(A_7)) = Z(L(G))$, $M_{62}$ and $M_{70}$, where the subscript denotes the dimension. Now, $Y$ lies in a subsystem subgroup $H$ of type $A_7$ in $G$, and the stabilisers of $M_{62}$, $M_{70}$ and $M_{62}+M_{70}$ all contain $H$. Since $H$ is maximal in $G$, it in fact equals each of these stabilisers. However, we proved in Section~\ref{sec:D4} that $X$ is not contained in any subgroup of type $A_7$, so $X$ does not stabilise $M_{62}$, $M_{70}$ or $M_{62} + M_{70}$. This shows that $L(G) \downarrow X$ is indecomposable. Similarly, since $X$ does not stabilise either of the indecomposable $Y$-direct summands of the $132$-dimensional $A_7$-module $L(G)/M_1 \cong M_{62} + M_{70}$, we see that $X$ is indecomposable on $L(G)/Z(L(G))$. Finally, consider the $G$-module $T(\lambda_2)$. This has a $133$-dimensional indecomposable submodule $L(G)$, hence $X$ is either indecomposable on $T(\lambda_2)$ or $X$ preserves a $1$-dimensional complement to $L(G)$. However, note that as an $H$-module, $T(\lambda_2)$ has indecomposable summands of dimension $1$, $1$, $63$ and $70$, and the subgroup $Y$ preserves only these direct summands and no more. Thus each $X$-module complement to $L(G)$ in $T(\lambda_2)$ is an $H$-submodule, and again $H$ is the full stabiliser of such a submodule. Since $X$ is not contained in a subgroup of type $A_7$, we again conclude that $T(\lambda_2)$ is an indecomposable $X$-module.

To determine the socle series, we first work with the $G$-module $T_{E_7}(\lambda_2)$, which is self-dual, indecomposable and contains $L(G)$ as a submodule. To begin, since $T_{E_7}(\lambda_2)$ has exactly one $X$-composition factor of each high weight $2\lambda_1$, $2\lambda_3$ and $2\lambda_4$, none of these can appear as a submodule or quotient, since the self-duality of $T_{E_7}(\lambda_2)$ would imply the existence of a direct summand with one of these weights.

Next, $L(G)$ has a $1$-dimensional centre since $p = 2$ and $G$ is simply connected, and $L(Q) \downarrow X = 0 + \lambda_2$, which does not meet $Z(L(G))$ and thus furnishes a second trivial submodule and a submodule $\lambda_2$; these latter submodules consist entirely of nilpotent elements of $L(G)$. The subgroup $Y < X$ does not have a $3$-dimensional trivial submodule in its action on $L(G)$, and we conclude that the fixed-point space of $X$ on $L(G)$ is $2$-dimensional. Next, note that for a rational cocycle $\phi$, the maps $x \mapsto \phi(x)x$ and $\phi(x)x \mapsto x$ are mutually inverse isomorphisms of algebraic groups between $X$ and its image in the $E_6$-Levi subgroup of $G$. This image in the Levi subgroup is of adjoint type, since its action on $V(\lambda_7)$ is a direct sum $0^2 + \lambda_2^2$. Thus $X$ is adjoint, so $L(G)$ has a submodule $L(X) = 0^2 | \lambda_2$, and moreover this submodule $\lambda_2$ does not consist entirely of nilpotent elements, hence is not equal to the submodule $\lambda_2$ furnished by $L(Q)$. Now, note that since $T_{E_7}(\lambda_2)$ has exactly four $X$-composition factors of high weight $\lambda_2$, we cannot have three of these in the socle, since this would imply the existence of a proper direct summand. Thus $L(G) \downarrow X$ has socle $0^2 + \lambda_2^2$. It follows that $T(\lambda_2)$ also has socle $0^2 + \lambda_2$, otherwise it would have socle $0^3 + \lambda^3$, which would imply that $T(\lambda_2) = L(G) + 0$, whereas we know it is in fact indecomposable for $X$.

Using the fact that $T_{E_7}(\lambda_2)$ admits a quotient $\lambda_2^2$, together with the fact that $2\lambda_1$, $2\lambda_3$ and $2\lambda_4$ have trivial first cohomology, we see that all of the composition factors $2\lambda_1$, $2\lambda_3$ and $2\lambda_4$ must appear as submodules once we factor out the socle of $T_{E_7}(\lambda_2)$, as well as all of the remaining trivial composition factors. Hence the same is also true for $L(G)$. Finally, by self-duality of $T_{E_7}(\lambda_2)$, it follows that the remaining two composition factors $\lambda_2^2$ do not appear in this second socle layer of $T(\lambda_2)$, hence they do not appear in the second socle layer of $L(G)$. The given socle series for $L(G) \downarrow X$ now follows, as well as
\[ T_{E_7}(\lambda_2) \downarrow X = \lambda_2^2|(2\lambda_1 + 2\lambda_3 + 2\lambda_4 + 0^4)|(0^2 + \lambda_2^2). \]
\end{proof}

\begin{remark}
Note that combining $T_{E_7}(\lambda_2) \downarrow X$ above with Proposition \ref{prop:D4restrictionMin}(ii), we immediately deduce the module structure of $L(E_8) \downarrow X$.  
\end{remark}

Finally, we need to determine the action of $X$ of type $D_4$ in $G = E_8$ when $X$ is contained in no reductive overgroup. To do this, we require an explicit construction of such a subgroup in an $E_6$-parabolic $P$ of $G$. For completeness, we construct of a conjugate of every non-$G$-cr subgroup of type $D_4$ in $P$. 

\begin{proposition} \label{prop:D4inE8gens}
Let $G$ be of type $E_8$, $p=2$ and let $P = QL$ be the standard $E_6$-parabolic subgroup of $G$. Let $X$ be a subgroup of type $D_4$ contained in $P$, with irreducible image $X_0$ in $L$. Then, identifying $\mathbb{V} = (H^{1}(X_0,\lambda_2))^3 = K^{6}$, there exists $\mathbf{v} = (\phi_1,\phi_2,\phi_3) = (a_1,\ldots,a_6) \in \mathbb{V}$ such that $X$ is conjugate to $X_\mathbf{v} = \langle y_{\pm i}(t) : t \in K,\, 1 \le i \le 4 \rangle$ for the following elements $y_{\pm i}(t)$.
\begin{align*}
y_{\pm1}(t) &= x_{\pm3}(t)x_{\pm5}(t), \\
y_{\pm2}(t) &= x_{\pm1}(t)x_{\pm6}(t), \\
y_3(t) &= x_{0011000}(t)x_{0001100}(t)x_{11232110}(a_1t) x_{11232111}(a_3t) x_{12354321}(a_5t),\\ 
y_{-3}(t) &= x_{-0011000}(t)x_{-0001100}(t)x_{11111110}(a_1t) x_{11111111}(a_3t) x_{12233321}(a_5t) ,\\
y_4(t) &= x_{0111000}(t)x_{0101100}(t)x_{12232110}(a_2t) x_{12232111}(a_4t) x_{13354321}(a_6t), \\
y_{-4}(t) &= x_{-0111000}(t)x_{-0101100}(t)x_{10111110}(a_2t) x_{10111111}(a_4t) x_{11233321}(a_6t).
\end{align*}
\end{proposition}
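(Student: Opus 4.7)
The proof follows the template of Proposition~\ref{prop:B3inE8gens}: the claim is essentially a computational verification of the Chevalley--Steinberg presentation of $D_4$ inside $E_8$. First, I would confirm that setting $a_1 = \cdots = a_6 = 0$ recovers $X_0$. The graph automorphism $\sigma$ of $E_6$ swaps $\alpha_1 \leftrightarrow \alpha_6$ and $\alpha_3 \leftrightarrow \alpha_5$ while fixing $\alpha_2, \alpha_4$, and the four $\sigma$-orbits
\[ \{\alpha_3,\alpha_5\},\ \{\alpha_1,\alpha_6\},\ \{\alpha_3+\alpha_4,\alpha_4+\alpha_5\},\ \{\alpha_2+\alpha_3+\alpha_4,\alpha_2+\alpha_4+\alpha_5\} \]
in $\Phi(E_6)^+$ each yield a short-root subgroup $y_i(t) = x_\alpha(t)x_{\sigma(\alpha)}(t)$ of $F_4 < E_6$. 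These four short roots form a base of the short-root $D_4$-subsystem of $F_4$, so the untwisted $y_{\pm i}(t)$ generate $X_0$.

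Next, I would locate the twist terms within the levels of $Q$. For $P = P_{123456}$, the level of a root is its combined $\alpha_7$- and $\alpha_8$-coefficient, and direct inspection shows that the additional root subgroups in $y_{\pm 3}, y_{\pm 4}$ lie in levels $1, 2, 3$ matched to the parameter pairs $(a_1,a_2)$, $(a_3,a_4)$, $(a_5,a_6)$. Under the identification $\mathbb{V} = (H^{1}(X_0,\lambda_2))^3 \cong K^6$, this matches the three summands $\lambda_2$ in the levels of $Q$, with $y_{\pm 3}$ contributing the $a_1, a_3, a_5$-directions in each $H^{1}(X_0,\lambda_2) \cong K^2$ and $y_{\pm 4}$ the complementary $a_2, a_4, a_6$-directions. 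One then verifies that the twisted $y_{\pm i}(t)$ satisfy the Chevalley--Steinberg presentation of $D_4$, namely \cite[Theorem 12.1.1]{MR0407163}. Expanding each relation via the $E_8$ Chevalley commutator formula, the required cancellations among root elements of $Q$ follow because the relevant root sums either are not roots of $E_8$, or have structure constants vanishing modulo $2$, or contribute to levels in which the explicit twist terms restore the correct group law; the symmetric placement of $(a_i)$ across $y_{\pm 3}$ and $y_{\pm 4}$ is precisely what is needed for this to work.

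Finally, surjectivity of the parametrization follows from the surjection $\rho : \mathbb{V} \to H^{1}(X_0, Q)$ of Section~\ref{sec:rho} together with Lemma~\ref{lem:associated}: every $D_4$-subgroup of $P$ with irreducible image $X_0$ in $L$ is $Q$-conjugate, hence $G$-conjugate, to some $X_\mathbf{v}$. The main obstacle is the verification of the Steinberg relations: although conceptually routine, it involves many commutator expansions with products of several root elements on each side, and is best carried out with computer algebra in a faithful representation of $E_8$, exactly as in the proof of Proposition~\ref{prop:B3inE8gens}.
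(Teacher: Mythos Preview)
Your proposal is correct and follows essentially the same approach as the paper: verify that the untwisted generators give $X_0$, check that the twist terms sit in the correct levels of $Q$, and then confirm the Steinberg relations (via \cite[Theorem~12.1.1]{MR0407163}, with computer assistance) so that each $X_{\mathbf{v}}$ really is a group of type $D_4$. Your explicit treatment of surjectivity via $\rho$ is a welcome addition; the paper leaves this implicit in the setup.
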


\begin{proof}
By construction, the given elements lie in $P$, and when $a_1,\ldots,a_6$ are all zero these generate the $L$-irreducible subgroup $X_0$, as shown in \cite[p.\ 444]{MR1367085}. It is also clear that the cocycles have image in the appropriate modules in the levels of $Q$. It remains, therefore, only to prove that each such subgroup is indeed a group of type $D_4$. This is now routine using \cite[Theorem 12.1.1]{MR0407163}; our calculations were assisted using \Magma.
\end{proof}

\begin{remark}
The form of the cocycles in the above construction was derived using the restriction $H^{1}(X_0,0100) \to H^{1}(Y_0,0100 \downarrow Y_0)$, where $Y_0$ is the subgroup of type $A_1^2$ of $X_0$ generated by $x_{\pm 0011000}(t)x_{\pm 0001100}(t)$ and $x_{\pm 0111000}(t)x_{\pm 0101100}(t)$. Then $Y_0$ is the derived subgroup of a Levi subgroup of $X_0$. Since $1000 \downarrow Y_0 = (1,1) + 0^{4}$, it follows that $0100 \downarrow Y_0 = (2,0) + (0,2) + (1,1)^4 + 0^6$, a completely reducible module. As mentioned earlier, in a simply connected semisimple algebraic group, the derived subgroup of any Levi subgroup is again simply connected. The Lie algebra of a simply connected simple group of type $D_4$ has shape $0100|0^2$, and the Lie algebra of a simply connected semisimple group of type $A_1 A_1$ has shape $((2,0)|0) + ((0,2)|0)$. It follows that the $X_0$-module $0100|0^2$ restricts to $Y_0$ as $(2,0)|0 + (0,2)|0 + (1,1)^4 + 0^6$, and in particular the map $H^{1}(X_0,0100) \to H^{1}(Y_0, 0100 \downarrow Y_0)$ is injective. Finally, we use the explicit description of cocycles for subgroups of type $A_1$ in \cite[Lemma 3.6.2]{MR3075783}.
\end{remark}

With this construction in hand, the following result allows us to deduce the action of the algebraic group using a well-chosen finite subgroup. 

\begin{lemma} [{\cite[Proposition~1.4]{MR1458329}}] \label{lem:fixfinite}
Let $X$ be a connected simple algebraic group and let $Y$
be a finite subgroup of $X$. Suppose $V$ is a finite-dimensional $X$-module
satisfying the following conditions:
\begin{enumerate}
\item Every $X$-composition factor is irreducible for $Y$; 
\item for each pair of $X$-composition factors $M$, $N$ of $V$, restriction $\Ext_X^1(M,N) \rightarrow \Ext^1_{Y}(M,N)$ is an injective map;
\item for each pair of $X$-composition factors $M$, $N$ of $V$, if $M \downarrow Y \cong N \downarrow Y$ then $M \cong N$ as $X$-modules.
\end{enumerate}
Then $X$ and $Y$ fix exactly the same subspaces of $V$. 
\end{lemma}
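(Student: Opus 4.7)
The strategy is a double induction based on $X$-module composition length. The forward containment ($X$-stable $\Rightarrow$ $Y$-stable) is trivial since $Y \le X$, so all work lies in the converse. I would first establish a technical lemma: under hypotheses (i)--(iii), for any pair $A, B$ of $X$-subquotients of $V$, restriction induces an equality $\Hom_X(A,B) = \Hom_Y(A,B)$ and an injection $\Ext^1_X(A,B) \hookrightarrow \Ext^1_Y(A,B)$. The base case (both $A$ and $B$ simple) is immediate: by (i) both become $Y$-irreducible, then Schur's lemma together with (iii) pins down the Hom-spaces, while (ii) supplies the $\Ext^1$ injection. The inductive step uses the long exact sequences coming from short exact sequences $0 \to A' \to A \to M \to 0$ with $M$ simple (and similarly filtering $B$), combined with a five-lemma chase.

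With this tool in hand, the main result proceeds by induction on the composition length of $V$. The base case $V$ simple is immediate from (i). For the inductive step, pick an irreducible $X$-submodule $W \subseteq V$; by (i), $W$ is $Y$-irreducible. Given a $Y$-submodule $U$, the intersection $U \cap W$ is $0$ or $W$. If $W \subseteq U$, then $V/W$ inherits (i)--(iii) (its composition factors form a subset of those of $V$), so by induction $U/W$, hence $U$, is $X$-stable. If $U \cap W = \{0\}$ and $U + W \subsetneq V$, two applications of induction finish the job: first to $V/W$ to see that $U + W$ is $X$-stable, and then to $U + W$ (of smaller length).

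The main obstacle is the remaining sub-case $V = U \oplus W$ as $Y$-modules. Here the extension class of $0 \to W \to V \to V/W \to 0$ lies in $\Ext^1_X(V/W,W)$, and the $Y$-splitting $U$ forces this class to vanish upon restriction to $\Ext^1_Y(V/W,W)$. The technical lemma above provides injectivity, so the extension splits as $X$-modules: $V = W \oplus U^{*}$ for some $X$-submodule $U^{*}$. The two $Y$-splittings $U$ and $U^{*}$ then differ by an element of $\Hom_Y(V/W, W)$, which by the technical lemma coincides with $\Hom_X(V/W, W)$; hence $U$ itself is an $X$-submodule, completing the induction. The most delicate point in this plan is the technical lemma's inductive extension of (ii) from simple pairs of composition factors to arbitrary subquotients, where one must set up the five-lemma argument so that both the $\Hom$-equality and $\Ext^1$-injectivity propagate together through the filtration.
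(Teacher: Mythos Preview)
The paper does not itself prove this lemma; it is quoted verbatim from \cite[Proposition~1.4]{MR1458329} and used as a black box. So there is no ``paper's own proof'' to compare against.

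Your proposal is correct and is essentially the argument given in the cited source. The two-stage structure---first bootstrapping (ii) and the Hom-equality from simple pairs to arbitrary $X$-subquotients via long exact sequences and a four-/five-lemma chase, then running induction on composition length with the trichotomy on $U \cap W$---is the standard route. The delicate point you flag is real but routine: in the diagram
\[
\begin{array}{ccccccccccc}
0 & \to & \Hom_X(A,B') & \to & \Hom_X(A,B) & \to & \Hom_X(A,N) & \to & \Ext^1_X(A,B') & \to & \cdots \\
  &     & \downarrow   &     & \downarrow  &     & \downarrow  &     & \downarrow     &     & \\
0 & \to & \Hom_Y(A,B') & \to & \Hom_Y(A,B) & \to & \Hom_Y(A,N) & \to & \Ext^1_Y(A,B') & \to & \cdots
\end{array}
\]
the inductive hypotheses (outer Hom-maps bijective, outer $\Ext^1$-maps injective) are exactly what the four-lemma needs to force the middle Hom-map surjective and the middle $\Ext^1$-map injective. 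Injectivity of the Hom-restriction is automatic since an $X$-map which is zero as a linear map is zero. One small point worth making explicit in Case~3: the two $Y$-sections $s_U$ and $s_{U^*}$ of $V \to V/W$ satisfy $\pi \circ (s_U - s_{U^*}) = 0$, so their difference genuinely lands in $W$, giving the element of $\Hom_Y(V/W,W)$ you need.
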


\begin{lemma} \label{lem:d4actione8}
Let $G=E_8$, $p=2$ and $X$ be an MR non-$G$-cr subgroup of type $D_4$. Then $L(G) \downarrow X$ has the following socle series:   
 \[T(\lambda_2)^2 + (0 | \lambda_2^3 | (0^4 + 2\lambda_1 + 2\lambda_3 + 2\lambda_4) | (\lambda_2^2 + 0^2) | (\lambda_2 + 0) ). \] 
\end{lemma}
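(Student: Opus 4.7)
The plan is to combine three inputs: the composition factors of $L(G)\downarrow X$ (which match those of $L(G)\downarrow X_0$ since $X$ is a complement to $Q$ in $QX_0$), the known action on a suitable subgroup $Y<X$ of type $B_3$ (already tabulated in Table~\ref{tab:E8p2}), and the explicit Chevalley-generator construction of Proposition~\ref{prop:D4inE8gens}, which allows a reduction to a finite-group computation via Lemma~\ref{lem:fixfinite}.

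First I would determine the composition factors of $L(G)\downarrow X$ by restricting through $X_0<C_4<F_4<E_6<G$; these are exactly $\lambda_2^{6}/2\lambda_1/2\lambda_3/2\lambda_4/0^{10}$, matching the multiset displayed in the socle series. Next, I would fix a subgroup $Y<X$ of type $B_3$ and adapt the argument of Lemma~\ref{lem:b3-in-d4}: because the $S_3$-action on $H^{1}(X_0,\lambda_2)\cong K^{2}$ for each $B_3$-Levi of $X_0$ preserves no $1$-space, one can choose $Y$ so that the cohomology class defining $X$ does not lie in the kernel of $H^{1}(X_0,\lambda_2)\to H^{1}(Y_0,\lambda_2)$. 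Comparing composition factors then identifies $Y$ with the subgroup $Y_{8}<D_4$ appearing in Section~\ref{subsec:b3ine8a5}, whose action on $L(G)$ is already recorded.

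The main structural work is then to pin down the socle series. The adjoint embedding gives a submodule $L(X)=0^{2}|\lambda_2$ inside $L(G)$, while $L(Q)\downarrow X=0+\lambda_2$ contributes one further trivial submodule and one further $\lambda_2$-submodule disjoint from $L(X)$ (using that $Z(L(G))$ is $1$-dimensional in characteristic $2$, as in the proof of Proposition~\ref{prop:D4restrictionAdj}). Self-duality of $L(G)$ forces a matching head, and the fact that $2\lambda_1,2\lambda_3,2\lambda_4$ are simple tilting modules with trivial first cohomology forces each of them, together with the remaining trivial factors, into an interior socle layer. To separate the $T(\lambda_2)^{2}$ summand from the indecomposable residue $R$, I would argue as in Proposition~\ref{prop:D4restrictionAdj}(ii): any further direct-sum decomposition of $L(G)\downarrow X$ would be stabilised by a proper subsystem overgroup of $X$, contradicting the fact that $X$ is MR.

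The hardest step is verifying that exactly two copies of $T(\lambda_2)$ split off and that $R$ is indecomposable with the displayed five-layer shape, rather than some other arrangement compatible with the composition factors. For this I would invoke the explicit generators of Proposition~\ref{prop:D4inE8gens} with the parameter choice corresponding to the MR class (a basis $\{\phi_1,\phi_2\}$ of $H^{1}(X_0,\lambda_2)$ with $\phi_3=0$), specialise to $\mathbb{F}_{q}$ for a suitably large $q$, and apply Lemma~\ref{lem:fixfinite}: since the $X$-composition factors $0,\lambda_2,2\lambda_1,2\lambda_3,2\lambda_4$ are pairwise non-isomorphic and remain irreducible and distinct on restriction to the finite group $Y(q)$, and the relevant $\operatorname{Ext}^{1}$-groups inject into those of $Y(q)$, the finite subgroup fixes the same subspaces of $L(G)$ as $X$. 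The socle series of $L(G)\downarrow X$ can then be computed directly from the explicit matrices (with \textsc{Magma}), yielding the stated decomposition $T(\lambda_2)^{2}+R$ with $R$ of the prescribed uniserial-like shape.
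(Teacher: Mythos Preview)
Your final step~5 is exactly the paper's proof: apply Lemma~\ref{lem:fixfinite} to $X$ and a finite subgroup $X(q)$ (the paper uses $q=4$, so $Y=\SO_8^+(4)$), check the three hypotheses (the $\Ext^1$-injectivity via \cite[Theorem~7.4]{MR0439856}, since all highest weights in play satisfy $\langle\lambda,\alpha_j\rangle\le 3$), and then read off the socle series from a \Magma{} computation using the generators of Proposition~\ref{prop:D4inE8gens} with $a_1=a_4=1$, $a_2=a_3=a_5=a_6=0$. The paper does nothing else.

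Your preliminary steps~1--4, by contrast, are either unnecessary or problematic, and the paper omits them entirely. In step~3 you assert $L(Q)\downarrow X=0+\lambda_2$, but this is the $E_7$ picture; in the $E_6$-parabolic of $E_8$ the unipotent radical has five levels and $L(Q)\downarrow X_0$ has composition factors $\lambda_2^3/0^6$, so the submodule accounting you sketch does not go through as written. Step~2 is effectively circular: the action of the non-$G$-cr $B_3$ subgroup $Y_8$ sitting inside the MR $D_4$ is itself computed in Lemma~\ref{lem:B3inD4}, \emph{after} the present lemma and by the same \Magma{}-plus-Lemma~\ref{lem:fixfinite} route, so you cannot lean on it here. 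Step~4's stabiliser argument, borrowed from Proposition~\ref{prop:D4restrictionAdj}, worked in $E_7$ because the $B_3$ summands were $A_7$-submodules and $A_7$ was maximal connected; there is no analogous clean overgroup argument here to isolate the $T(\lambda_2)^2$ summand without the computation. Since you concede that step~5 is needed anyway, the cleanest fix is to drop steps~2--4 and proceed directly as the paper does.
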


\begin{proof}
We apply Lemma \ref{lem:fixfinite} to $X$ and its finite subgroup $Y:=X(4) \cong \SO_8^+(4)$, acting on $L(G)$. To do so, we must check the three conditions hold. Conditions (i) and (iii) are immediate from inspection of the $X$-composition factors of $L(G)$. Condition (ii) follows from \cite[Theorem~7.4]{MR0439856}, noting that $\langle \lambda, \alpha_j\rangle \le 3$ for $j \in \{1, \ldots, 4\}$ for all highest weights $\lambda$ occurring in $L(G) \downarrow X$.

Therefore, $X$ and $Y$ stabilise exactly the same subspaces of $L(G)$. In particular, the claimed socle series for $L(G) \downarrow X$ follows from the socle series for $L(G) \downarrow Y$. The latter socle series can be calculated in \Magma, using the generators in Proposition \ref{prop:D4inE8gens} with $a_1 = a_4 = 1$ and $a_i = 0$ for $i=2,3,5,6$.  
\end{proof}

\begin{lemma} \label{lem:B3inD4}
Let $G$ be of type $E_8$, $p=2$ and $X$ be an MR non-$G$-cr subgroup of type $D_4$. If $Y < X$ is of type $B_3$ then $Y$ is non-$G$-cr and acts with five direct summands on $L(G)$ of dimensions 30, 30, 62, 63, 63.  
\end{lemma}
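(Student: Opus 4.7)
The plan is to proceed by a direct \emph{explicit} computation combined with an appeal to Lemma~\ref{lem:fixfinite}, mirroring the strategy used to establish Lemma~\ref{lem:d4actione8}. First I would realise $X$ concretely as the subgroup constructed in Proposition~\ref{prop:D4inE8gens} with $a_1 = a_4 = 1$ and $a_i = 0$ for $i \in \{2,3,5,6\}$, and take $Y < X$ to be a $B_3$-Levi subgroup of $X$, using the generators $y_{\pm i}(t)$ for the simple root subgroups corresponding to a $B_3$-Levi of the $D_4$ Dynkin diagram of $X$. There are three such $D_4$-conjugacy classes of $B_3$-subgroups, permuted by triality of $D_4$, but any choice yields the same $G$-conjugacy class since the composition factors of $L(G) \downarrow X$ (and hence of $L(G) \downarrow Y$) are invariant under this permutation.

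Second, I would apply Lemma~\ref{lem:fixfinite} to $Y$ and its finite subgroup $Y(4) \cong \Omega_7(4)$ acting on $L(G)$. The three conditions of that lemma hold: all $Y$-composition factors occurring in $L(G) \downarrow Y$ (obtained by restricting the $X$-composition factors listed in Lemma~\ref{lem:d4actione8} to $Y$) are irreducible and restricted for $Y(4)$; the required injectivity of restriction on $\Ext^{1}$-groups follows from \cite[Theorem~7.4]{MR0439856} upon checking the coefficient bounds against each simple root of $Y$; and no two non-isomorphic $Y$-composition factors become isomorphic on restriction to $Y(4)$. Consequently $Y$ and $Y(4)$ stabilise exactly the same subspaces of $L(G)$, and the indecomposable direct summand decomposition of $L(G) \downarrow Y$ is determined by that of $L(G) \downarrow Y(4)$, which can be computed in \Magma using the explicit matrices arising from our generators of $Y(4)$. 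This is where the claimed five summands of dimensions $30,30,62,63,63$ emerge.

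Third, the non-$G$-cr assertion follows once the summand structure is known. If $Y$ were $G$-cr it would lie in a Levi subgroup $M$ of $G$ whose simple factors each have rank $\ge 3$. Inspecting the possible $M$ and the indecomposable summands of $L(G) \downarrow M$ available from \cite[Lemma~11.2]{MR2883501} or \cite[Table~10.1]{MR1329942}, no such $M$ admits a $B_3$-subgroup whose action on $L(G)$ involves indecomposable summands of dimension as large as $62$ and $63$ (indeed, each Levi summand decomposes further into smaller $B_3$-direct summands). Alternatively, one can read off the actions of all $G$-cr $B_3$-subgroups listed in Tables~\ref{tab:E6}--\ref{tab:E8p2} and observe that none match, so $Y$ must lie in a proper parabolic subgroup yet in no Levi factor, and is therefore non-$G$-cr.

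The main obstacle is the second step: verifying the hypotheses of Lemma~\ref{lem:fixfinite} across all pairs of composition factors, and in particular controlling the $\Ext^{1}$-restriction map cleanly enough to trust the \Magma computation of the summand structure. A secondary subtlety lies in ensuring that the triality-related choices of $B_3 < D_4$ really do yield $G$-conjugate subgroups; this is most cleanly seen post-hoc from the indistinguishable $L(G)$-restrictions, but requires mild care to set up without circularity.
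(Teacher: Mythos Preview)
Your overall strategy matches the paper's: realise $X$ via Proposition~\ref{prop:D4inE8gens}, write down generators for $Y$, pass to the finite subgroup $Y(4)$, invoke Lemma~\ref{lem:fixfinite}, compute the summands in \Magma, and deduce non-$G$-cr from incompatibility with the Levi actions on $L(G)$.

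There is one genuine gap. First, a terminological slip: there is no $B_3$-Levi subgroup of $D_4$ (removing a node from the $D_4$ diagram yields $A_3$ or $A_1^3$); the three maximal $B_3$ subgroups of $D_4$ arise as fixed-point subgroups of graph involutions, not as Levi subgroups, so you cannot simply take a subset of the $y_{\pm i}$. More importantly, your attempt to reduce to a single computation by arguing that the three $D_4$-classes of $B_3$ subgroups fall into one $G$-class does not go through: invariance of composition factors under triality does not imply $G$-conjugacy, nor does equality of the full $L(G)$-restrictions. In fact, the Remark immediately following the paper's proof of this lemma points out that $G$-conjugacy of the three $B_3$ subgroups is established only \emph{after} the classification of non-$G$-cr $B_3$ subgroups in Section~\ref{subsec:b3ine8a5}, and that classification itself uses Lemma~\ref{lem:B3inD4}; so arguing conjugacy here would be circular. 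The paper therefore writes down generators for a representative of each of the three $B_3$-classes and performs the \Magma{} computation three times, obtaining the same summand dimensions in each case. You should do the same; the lemma does not require the three classes to be conjugate, only that each $Y$ satisfies the stated conclusion.
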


\begin{proof}
A group of type $D_4$ contains three conjugacy classes of maximal subgroups of type $B_3$. From the construction of $X$ in Proposition \ref{prop:D4inE8gens}, we can write down generators for the three classes. Using \Magma{} we find they each contain a finite subgroup $\SO_7(4)$ which acts with five direct summands on $L(G)$ of dimensions $30, 30, 62, 63, 63$. Another routine application of Lemma \ref{lem:fixfinite} yields that these are actually the direct summands of each maximal subgroup of type $B_3$. That these are non-$G$-cr now follows since no Levi subgroup can contain a subgroup acting with summands of the given dimensions.    
\end{proof}

\begin{remark}
It follows from the classification of non-$G$-cr subgroups of type $B_3$ in $G = E_8$ when $p=2$ that all subgroups of type $B_3$ in an MR subgroup of type $D_4$ are conjugate to one another. We do not state this in the previous lemma, because the lemma itself used in the proof of the classification.  
\end{remark}


\section{Tables of embeddings for Theorem~\ref{THM:MAIN}} \label{sec:tables}

We now give the tables listing non-$G$-cr subgroups $X$ of simple algebraic groups $G$ as in Theorem~\ref{THM:MAIN}, which have been enumerated in Sections~\ref{sec:A3}--\ref{sec:D4}. Within each table, each given $X$ indicates a unique $G$-conjugacy class of subgroups, except where explicitly stated otherwise. 

The first column in each table gives the Lie type of $X$. For each type, we use a horizontal line starting from Column 2 to distinguish between non-$G$-cr subgroups minimally contained in different association classes of parabolic subgroups. For convenience, the order is the same as the order in which we have considered the parabolic subgroups in the proof.

The second column either gives a proper reductive overgroup $M$ of $X$ or `MR', indicating that $X$ is maximal among connected reductive subgroups of $G$, cf.\ Corollaries\ \ref{cor:MR} and \ref{cor:overgroups}. When $M$ is classical we use the notation of Section~\ref{sec:embed} to indicate its embedding. When $M$ is exceptional, it turns out that $X$ is always non-$M$-cr and we give the same overgroup of $X$ as in the table for $M$ (when $X$ is MR in $M$, we just list $M$).

The next columns give the action of $X$ on a non-trivial $G$-module of least dimension, and on the adjoint module $L(G)$ (when these modules are different); the structure of the Weyl modules and tilting modules in these columns is given in Appendix~\ref{sec:ancillary}.

The final column gives the structure of $C_{G}(X)^\circ$ using the notation in Section~\ref{sec:centnot}. The symbol $(\dagger)$ indicates that $X$ is separable in $G$, i.e.\ $L_{C_G(X)}(X) = L(C_G(X))$. We recall that $G$ is simply connected, so in the case $G = E_7$, where $G$ is not a separable subgroup of itself, the inseparability of many subgroups $X$ is an artefact of this choice.  

\begin{landscape}
\begin{longtable}{lllll}
\caption{Non-$G$-cr subgroups of $G = F_4$, $p = 2$} \label{tab:F4} \\
\hline
$X$ & Embedding of $X$ & $V_{26} \downarrow X$ & $L(G) \downarrow X$ & $C_G(X)^\circ$ \\
\hline

$B_3$ & $D_4$ via $T(100)$ & $T(100) + 001^2 + 0^2$ & $(100|010|100|0^2) + T(100) + 001^2$ & $U_1$ \\ 
\cline{2-5}
 & $\tilde{D}_4$ via $T(100)$ & $100|010|100$ & $((002 + 0^2) | (200 + 100) | (010 + 002 + 0) | 100 ) + 0$ & $U_1$ $(\dagger)$ \\ 
\hline
\end{longtable}

\begin{longtable}{lllll}
\caption{Non-$G$-cr subgroups of $G = E_6$, $p = 2$} \label{tab:E6} \\
\hline
$X$ & Embedding of $X$ & $V_{27} \downarrow X$ & $L(G) \downarrow X$ & $C_G(X)^\circ$ \\
\hline

$B_3$ & $\tilde{D}_4 < F_4$ & $(100|010|100) + 0$ & $100| (010 + 002 + 0^2) | (200 + 100^2) | (010 + 002 + 0) | (100 + 0)$ & $U_1$ $(\dagger)$ \\ 

\hline
\end{longtable}

{\small
\begin{longtable}{llrll}
\caption{Non-$G$-cr subgroups of $G = E_7$, $p = 2$} \label{tab:E7} \\
\hline
$X$ & Embedding of $X$ & & Module actions & $C_G(X)^\circ$ \\
\hline
$A_3$ & $D_6$ via $010 + 010$ & $V_{56} \downarrow X =$	& $010^4 + T(200) + T(002)$, & $U_5\bar{A}_1$ \\
& & $L(G) \downarrow X =$ 	& $W(101) + W(101)^* +  T(101)^4 + T(020) + 0^3$ \\
\cline{2-5}
& $D_6$ via $010 + 010$  & $V_{56} \downarrow X =$ 	& $010^4 + T(101)^2$ &  $U_1\bar{A}_1$ \\
& & $L(G) \downarrow X =$ 	& $W(101) + W(101)^* + T(200)^2 + T(002)^2 + T(020) + 0^3$ \\ 

\hline
$B_3$ & $E_6$ & $V_{56} \downarrow X =$ & $(100|010|100)^2 + 0^4$ & $U_1A_1$ $(\dagger)$ \\
& & $L(G) \downarrow X =$ & $(100| (010 + 001 + 0^2) | (200 + 100^2) | (010 + 001 + 0) | (100 + 0)) + (100|010|100)^2 + 0^3$ \\
& $A_6$ via $W(100)$ & $V_{56} \downarrow X =$ & $W(100) + W(100)^* + ((0 + 010)|100) + (100|(0 + 010))$ & $U_2 T_1$ $(\dagger)$ \\
& & $L(G) \downarrow X=$ & $(100|(010+0)|200|(010+0)|100) + W(100) + W(100)^* + (100|002|100|(010 + 0)) +$ \\
& & & $(010|100|(002+0)|100) + 0$ \\ 
& $A_7$ via $T(100)$ & $V_{56}\downarrow X =$ & $(100|(010+0)|(100 + 0))^2 $ & $U_2$ \\ 
& & $L(G)\downarrow X=$ & $(100|(010+0^2)|(200+100)|(100+010+0)|(100+0)) +$ \\
& & & $(100|(010+002)|100^2|(010+002+0)|(100+0)) + 0$ \\ 
& $A_7$ via $001$ & $V_{56} \downarrow X =$ & $(0|100|010|100|0)^2$ & $U_1$ \\ 
 & & $L(G) \downarrow X=$ & $(100|010|100|(002 + 0)|100|(010+0)|100) +$ \\
 & & & $(100|010|100|(002+0^2)|(200+100)|(010+0)|(100+0)) + 0$ \\ 
\hline
$C_4$ & $A_7$ via $1000$ & $V_{56} \downarrow X=$ & $T(0100)^2$ & $U_1$ \\
& & $L(G) \downarrow X=$ & $(0100|0|2000|0|0100) + (0100|(0001 + 0)|(0100+0)) + 0$\\ 
\hline
$D_4$	& MR ($\infty$ classes) & $V_{56} \downarrow X=$ & $0^2|0100^2|0^2$  & $U_1$ \\
 		& & $L(G) \downarrow X=$ & $0100^2|(2000 + 0020 + 0002 + 0^3)|(0^2 + 0100^2)$ \\ 
& $A_7$ via $1000$ & $V_{56}\downarrow X=$ & $(0|0100|0)^2$ & $U_1$ \\
& & $L(G) \downarrow X=$ & $(0100|(2000 + 0^2)|0100) +  (0100|(0020 + 0002 + 0)|(0100+0)) + 0$ \\ 
\hline
\end{longtable}
}

\begin{longtable}{llll}
\caption{Non-$G$-cr subgroups of $G = E_8$, $p=3$} \label{tab:E8p3} \\
\hline
$X$ & Embedding of $X$ & $L(G) \downarrow X$ & $C_G(X)^\circ$\\
\hline

$C_3$ & $D_8$ via $T(010) + 000$ & $200 + T(010) + T(101) + W(110) + W(110)^*$ & $U_1$ $(\dagger)$ \\

\hline
\end{longtable}

\begin{longtable}{llll}
\caption{Non-$G$-cr subgroups of $G = E_8$, $p=2$} \label{tab:E8p2} \\
\hline
$X$ & Embedding of $X$ & $L(G) \downarrow X$ & $C_G(X)^\circ$\\
\hline

$A_3$ & $D_6$ via $010 + 010$ & $T(200)^2 + T(101)^4 + W(101) + W(101)^* + T(020) + 010^8 + T(002)^2 + 0^6$ & $U_5 \bar{A}_1^2$ $(\dagger)$ \\

\cline{2-4} 

& $D_8$ via $T(101)$ & $(101|(020+0^2)|(210 + 012 + 101)|(020 + 0)|(101^2+0)) + 111^2$ & $1$ \\ 

\hline

$B_3$ & $E_6$ & $(100| (010 + 001 + 0^2) | (200 + 100^2) | (010 + 001 + 0) | (100 + 0)) + (100|010|100)^6 + 0^{14}$ & $U_1 G_2$ $(\dagger)$ \\

& $A_6$ via $W(100)$ & \makecell[l]{$(100|(010+0)|200|(010+0)|100) + W(100)^3 + (W(100)^*)^3 + ((0 + 010)|100)^2 +$ \\ $ (100|(0 + 010))^2 + (100|002|100|(010 + 0)) + (010|100|(002+0)|100) + 0^4$} & $U_6 \bar{A}_1 T_1$ $(\dagger)$ \\

& $A_7$ via $T(100)$ & $T(200) + T(100)^4 + (100|(010 + 0)|(100 + 0))^2 + (010|002|100|010)^2$ & $U_5A_1$ $(\dagger)$ \\

& $A_7' < E_7$ via $T(100)$ & \makecell[l]{$(100|(010+0^2)|(200+100)|(100+010+0)|(100+0)) + (100|(010+0)|(100 + 0))^4 +$ \\ $(100|(010+002)|100^2|(010+002+0)|(100+0)) + 0^4$} & $U_6\bar{A}_1$ \\ 

& $A_7' < E_7$ via $001$ & \makecell[l]{$(100|010|100|(002 + 0)|100|(010+0)|100) + (0|100|010|100|0)^4 +$ \\ $  (100|010|100|(002+0^2)|(200+100)|(010+0)|(100+0)) + 0^4$} & $U_5\bar{A}_1$ \\ 

& $D_7$ via $W(100) + W(100)^*$ & \makecell[l]{$W(100)^2 + (W(100)^*)^2 + (100|(010+0)|200|(010+0)|100) + ((0+010)|100|0) +$ \\ $ (0|100|(0+010)) + T(002)^2$} & $U_3 A_1$  $(\dagger)$ \\ 

& $D_8$ via $T(100)^2$ & $(100|010|100|0^2) + (0|100|(010+0)|100) + T(200) + T(002)^2$& $U_5$ \\ 

& non-$G$-cr MR $D_4$  & $T(010)^2 + (100|(010+0^2)|(200+100)|(100+010+0)|(100+0)) +$ & $U_6$ $(\dagger)$ \\
&                               & $(100|010|100|(002+0)|100|(010+0)|100|0) + (0|100|010|100|(002+0)|100|(010+0)|100)$  \\

\cline{2-4} 

& $D_8$ via $001^2$ & $(0|100|010|100|0)^2 + 101^2 + 001^4 + T(002)$& $U_1$ $(\dagger)$ \\ 

\cline{2-4}

& \makecell[l]{$D_8$ via \\ $T(100) + T(100)^{[r]}$ $(r \neq 0)$} & \makecell[l]{$(0|(100 + 100^{[r]})|(010 + 010^{[r]} + 0)|(100 + 100^{[r]})|0^2) +$ \\ $(0|(100 + 100^{[r]})|(100 \otimes 100^{[r]} + 0^2)|(100 + 100^{[r]})|0) + (001 \otimes 001^{[r]})^2$} & $U_1$ \\

\hline

$B_3^2$ & \makecell[l]{$D_8$ via \\ $(0|(100,0)|0) + (0|(0,100)|0)$} & \makecell[l]{$ (0|((100,0) + (0,100))|((010,0) + (0,010) + 0)|((100,0) + (0,100))|0^2) +$ \\ $(0|((100,0) + (0,100))|((100,100) + 0^2)|((100,0) + (0,100))|0) + (001,001)^2$}  & $U_1$ $(\dagger)$ \\

\hline

$B_4$ & $A_8$ via $W(1000)$ &\makecell[l]{$(1000|0|0100|0|2000|0|0100|0|1000) + (1000|(0010 + 0)|0100|0) +$ \\ $(0|0100|0|(1000+0010))$} & $U_1$ $(\dagger)$ \\

& $D_8$ via $0001$ & \makecell[l]{$(1000|0|0100|(0010 + 0)|0100|0|(1000 + 0)) +$ \\ $(1000|0|0100|0|2000|(0010+0)|0100|0|1000)$} & $1$ \\

\hline

$C_4$ & $A_7 < E_7$ & $(0100|0|2000|0|0100) + T(0100)^4 + (0100|(0001 + 0)|(0100+0)) + 0^4$ & $U_{5}\bar{A}_1$ \\

\hline

$D_4$ & $D_8$ via $1000+1000$ &  $(0^2|0100^2|0^2) + T(2000) + T(0011)^2$ & $U_1$ \\ 

\cline{2-4}

& $E_7$ ($\infty$ classes) & $(0100^2|(2000 + 0020 + 0002 + 0^4)|(0^2 + 0100^2)) + (0^2|0100^2|0^2)^2 + 0^2$ & $U_{5}\bar{A}_{1}$ $(\dagger)$ \\

& $A_7 < E_7$ & $(0100|(2000+0^2)|0100) + (0|0100|0)^4 + (0100|(0020 + 0002 + 0)|(0100+0)) + 0^4$ & $U_{5}\bar{A}_{1}$ \\

& $D_8$ via $1000+1000$ & $(0100|0^2) + (0^2|0100) + T(2000) + T(0020) + T(0002)$ & $U_4$ \\ 

& MR & $T(0100)^2 + (0 | 0100^3 | (0^4 + 2000 + 0020 + 0002) | (0100^2 + 0^2) | (0100 + 0) )$ & $U_5$ $(\dagger)$ \\ 

\hline
\end{longtable}

\end{landscape}

\appendix

\section{Ancillary data} \label{sec:ancillary}

Here we provide the socle series for the Weyl and tilting modules occurring in Tables \ref{tab:F4}--\ref{tab:E8p2}. These mostly follow from the Weyl character formula and knowledge of the weights of low-dimensional irreducible modules, as given for instance in \cite{MR1901354}. Explicit calculations are also facilitated using S.~Doty's software package \cite{Dot1}. For tilting modules, attention is focused on indecomposable modules $T(\lambda)$, since direct sums and summands of tilting modules are tilting; moreover tensor products of tilting modules are tilting. Finally, the uniqueness of $T(\lambda)$ for each dominant weight $\lambda$ implies that $T(\lambda) \cong T(-w_0 \lambda)^{\ast}$ where $w_0$ is the longest element of the Weyl group; for us this is often sufficient information to determine the submodule structure of $T(\lambda)$.

Two of the most complicated tilting modules occurring in this paper arise for groups of type $B_3$ in characteristic $2$, specifically $T(200)$ and $T(002)$. The former is $T(100) \otimes T(100)$ and the latter is $001 \otimes 001$. Using a sufficiently large finite subgroup, we can use Lemma \ref{lem:fixfinite} to perform explicit computations, e.g.\ in \Magma, to obtain the precise module structure. By \cite[Corollary 7.5]{MR0439856}, a subgroup $\Omega_7(4)$ is `sufficiently large' for our purposes.

{\small
\begin{center}
\begin{tabular}{c|c|c|c}
$X$ & $p$ & $\lambda$ & Socle Series of $W(\lambda)$ \\ \hline
$A_3$ & $2$ & $101$ & $101|0$ \\
	  \hline
$B_3$ &$2$ & $100$ & $100 | 0$ \\ \hline
$C_3$ & $3$ & $110$ & $110|001$ \\
\hline
$B_4$ & $2$ & $1000$ & $1000 | 0$ \\
\end{tabular}
\end{center}
}

{\small
\begin{center}
\begin{tabular}{c|c|c|c}
$X$ & $p$ & $\lambda$ & Socle Series of $T(\lambda)$ \\ \hline
$A_3$ & $2$ & $101$ & $0|101|0$ \\
	  & & $020$ & $101|(020 + 0)|(101 + 0)$ \\ 
	  & & $200$ & $010|200|010$ \\
	  & & $002$ & $010|002|010$ \\
	  \hline
$B_3$ &$2$ & $100$ & $0 | 100 | 0$ \\
      & & $010$ & $0|100|(010 + 0)|100|0^2$ \\
	  & & $200$ & $100|(010 + 0^2)|(200+100+0)|(100+010+0)|(100+0^2)$ \\
	  & & $002$ & $0 | 100 | 101 | 100 | (002 + 0) | 100 | (101 + 0) | 100 | 0$ \\ \hline
$C_3$ & $3$ & $010$ & $0 | 010|0$ \\ 
	  & 	& $101$ & $010|(101+0)|010$ \\
\hline
$C_4$ & $2$ & $0100$ & $0 | 0100 | 0$ \\
\hline
$D_4$ & $2$ & $2\lambda_i$, $i=1,3,4$ & $0 | 0100 | 0 | (2\lambda_i + 0) | 0100 | 0$ \\
	  & & $0100$ & $0^2 | 0100 | 0^2$ \\ 
	  & & $0011$ & $1000 | 0011 | 1000$ \\ 
	  
\end{tabular}
\end{center}
}

\section*{Acknowledgements} 

The authors thank the London Mathematical Society for support through a Scheme 4 grant, and the MFO for a Research in Pairs visit. They also thank the Isaac Newton Institute for Mathematical Sciences for support and hospitality during the programme \emph{Groups, Representations and Applications: New perspectives}, when much of the work on this paper was undertaken. This was supported by: EPSRC grant number EP/R014604/1.

The first author acknowledges support from the Alexander von Humboldt Foundation, Germany, as well as a Scheme 9 \emph{Research Reboot} grant from the London Mathematical Society.

The second author is supported by EPSRC grant EP/W000466/1.

\bibliographystyle{amsplain}
\bibliography{biblio-rank-at-least-3}

\end{document}